\newtheorem{thm}{Theorem}
\newtheorem{lemma}[thm]{Lemma}
\newtheorem{prop}[thm]{Proposition}
\newtheorem{corollary}[thm]{Corollary}
\numberwithin{thm}{section}
\numberwithin{equation}{section}
\numberwithin{figure}{section}
\newtheorem{skewthm}[thm]{Theorem \ref{thm:skew-formula}}
\theoremstyle{definition}
\newtheorem{example}[thm]{Example}
\newtheorem{definition}[thm]{Definition}
\newtheorem{q}[thm]{Question}
\newtheorem{conj}[thm]{Conjecture}
\newtheorem{remark}[thm]{Remark}
\newcommand{\SSYT}{\mathrm{SSYT}}
\newcommand{\cc}{\mathrm{cc}}
\newcommand{\ch}{\mathrm{ch}}
\newcommand{\sh}{\mathrm{sh}}
\newcommand{\revq}{\mathrm{rev}_q}
\newcommand{\sort}{\mathrm{sort}}
\newcommand{\coinv}{\mathrm{coinv}}
\newcommand{\Frob}{\mathrm{Frob}}
\newcommand{\grFrob}{\mathrm{grFrob}}
\newcommand{\la}{\lambda}
\newcommand{\bC}{\mathbb{C}}
\newcommand{\bZ}{\mathbb{Z}}
\newcommand{\bQ}{\mathbb{Q}}
\newcommand{\cO}{\mathcal{O}}
\newcommand{\cN}{\mathcal{N}}
\newcommand{\sP}{\mathscr{P}}
\newcommand{\sB}{\mathscr{B}}
\newcommand{\fp}{\mathfrak{p}}
\newcommand{\fn}{\mathfrak{n}}
\newcommand{\fl}{\mathfrak{l}}
\newcommand{\Lie}{\mathrm{Lie}}
\newcommand{\JT}{\mathrm{JT}}
\newcommand{\Ad}{\mathrm{Ad}}
\title[Cocharge and skewing for the $\Delta$-Springer modules]{Cocharge and skewing formulas for \\ $\Delta$-Springer modules and the Delta Conjecture}
\author{Maria Gillespie}
\address{Maria Gillespie, Department of Mathematics, Colorado State University, Fort Collins, CO 80523}
\email{\href{mailto:Maria.Gillespie@colostate.edu}{Maria.Gillespie@colostate.edu}}
\thanks{The first author was partially supported by NSF DMS award number 2054391.}
\author{Sean T.\ Griffin}
\address{Sean T.\ Griffin, Department of Mathematics, University of California Davis, Davis, CA 95616}
\email{\href{mailto:stgriffin@ucdavis.edu}{stgriffin@ucdavis.edu}}
\date{\today}
\begin{document}

\maketitle

\begin{abstract}
We prove that $\omega \Delta'_{e_{k}}e_n|_{t=0}$, the symmetric function in the Delta Conjecture at $t=0$, is a skewing operator applied to a Hall-Littlewood polynomial, and generalize this formula to the Frobenius series of all $\Delta$-Springer modules. We use this to give an explicit Schur expansion in terms of the Lascoux-Sch\"{u}tzenberger cocharge statistic on a new combinatorial object that we call a \textit{battery-powered tableau}. Our proof is geometric, and shows that the $\Delta$-Springer varieties of Levinson, Woo, and the second author are generalized Springer fibers coming from the partial resolutions of the nilpotent cone due to Borho and MacPherson.

We also give alternative combinatorial proofs of our Schur expansion for several special cases, and give conjectural skewing formulas for the $t$ and $t^2$ coefficients of $\omega \Delta'_{e_{k}}e_n$.
%We prove that $\omega \Delta'_{e_{k-1}}e_n|_{t=0}$, the symmetric function appearing in the Delta Conjecture at $t=0$, is a skewing operator applied to a Hall-Littlewood polynomial.  In fact, our formula generalizes to the Frobenius series of all $\Delta$-Springer modules, and we use this formula to give an explicit Schur expansion in terms of the Lascoux-Sch\"utzenberger cocharge statistic on a new combinatorial object that we call a \emph{battery-powered tableau}. Our proof of the formula is geometric and relies on the theory of partial resolutions of the nilpotent cone due to Borho and MacPherson. In particular, we show that the \emph{$\Delta$-Springer varieties} of Levinson, Woo, and the second author are special instances of generalized Springer fibers coming from partial resolutions. 

%We also give alternative combinatorial proofs of our Schur expansion for several special cases using weight-preserving bijections to other formulas for this symmetric function, including an expansion in terms of Hall-Littlewood polynomials and the \textit{minimaj} formula in the Delta Conjecture setting.  Finally, we give conjectural skewing formulas for the $t$ and $t^2$ coefficients of $\Delta'_{e_{k-1}}e_n$ that generalize our formula at $t=0$.
\end{abstract}

\section{Introduction and Main Results}

\textit{Schur positivity} is a central focus of algebraic combinatorics.  One famous example is the Macdonald Positivity Conjecture, proven by Haiman \cite{Haiman}, which states that the symmetric \textit{Macdonald polynomials} $\widetilde{H}_\mu(x;q,t)$ expand in the Schur basis with positive coefficients in $\mathbb{Z}_+[q,t]$. 
 The proof uses the geometry of the Hilbert scheme $\mathrm{Hilb}_{n}(\bC^2)$ of arrangements of $n$ points in the plane $\bC^2$, and no direct combinatorial proof or explicit formula is yet known.

The \textit{Delta Conjecture} \cite{HRW}, which generalizes the recently-proven Shuffle Theorem \cite{CarlssonMellit}, motivates a major current area of research in symmetric function theory (e.g. \cite{BHMPS}, \cite{HRS},\cite{Schedules},  \cite{RhoadesPawlowski}).  It states two combinatorial formulas, in terms of parking functions, for $\Delta'_{e_{k-1}}e_n$ where $\Delta'_f$ is a particular eigenoperator of the Macdonald polynomials defined for any symmetric function $f$.   One of the two Delta Conjecture formulas has been proven in \cite{BHMPS,DAdderio-Mellit}. 

The Shuffle Theorem concerns the special case when $k=n$, in which $\Delta'_{e_{n-1}} e_n$ is the bi-graded \textit{Frobenius series} (in $q,t$) of the \textit{diagonal coinvariant ring} $$\mathrm{DR}_n=\mathbb{Q}[x_1,\ldots,x_n,y_1,\ldots,y_n]/I_n$$ where $I_n$ is generated by the $S_n$-invariants with no constant term under the diagonal action of $S_n$ permuting the $x$'s and $y$'s simultaneously.    While the Shuffle Theorem gives a monomial expansion for $\Delta'_{e_{n-1}}e_n$, an explicit formula for the Schur expansion is not known (and similarly for the Delta Conjecture).

In particular, the decomposition of a graded $S_n$-module $R=\bigoplus_d R_d$ into irreducibles can be described by its \textit{graded Frobenius character}
$$\grFrob(R):=\sum_d \Frob(R_d)q^d$$ where $R_d$ is the $d$-th graded piece and $\Frob$ is the additive map on representations that sends the irreducible $S_n$-module $V_\nu$ to the Schur function $s_\nu$.  For a bi-graded module, we use two parameters $q,t$ and obtain a bi-variate generating series.  This means that determining the Schur expansion for Macdonald polynomials, the Shuffle theorem polynomials, or those of the Delta Conjecture would lead to a deeper understanding of the $S_n$-representation theory of the associated (bi-)graded modules.

In the one-parameter case, setting $t=0$ often leads to more tractable problems.  For instance, a famous result of Lascoux and Sch\"utzenberger was their discovery of the \textit{cocharge} statistic on Young tableaux to give a combinatorial formula for the Schur expansion of the (modified) Hall-Littlewood polynomials $\widetilde{H}_{\mu}(x;q)$, which are the $t=0$ specialization of the Macdonald polynomials. The polynomials $\widetilde{H}_{\mu}(x;q)$ are the graded Frobenius character of the \emph{Garsia-Procesi} modules $R_\mu$.  These $S_n$-modules in turn are the cohomology rings of \textit{Springer fibers} $\sB_\mu$.  The cocharge statistic therefore resolved the natural question of how $R_\mu$ decomposes into irreducible $S_n$-modules.

%In this notation, we have $\grFrob(R_\mu)=\widetilde{H}_\mu(x;q)$, and so in order to describe the graded decomposition of $R_\mu$ into irreducible $S_n$-modules, it suffices to expand $\widetilde{H}_\mu(x;q)$ in terms of Schur functions. 

In particular, for a partition $\mu$, define $\SSYT(\mu)$ to be the set of all (straight shape) semistandard Young tableaux of content $\mu$, meaning that the tableau entries consist of $\mu_i$ copies of $i$ for each $i$, and the entries are weakly increasing across rows and strictly increasing up columns in French notation (as in the ``device'' part of the tableau at left in Figure \ref{fig:battery}).   Lascoux and Sch\"utzenberger showed that \begin{equation}\label{eq:lascoux-schutzenberger}\grFrob(R_\mu)=\widetilde{H}_\mu(x;q)=\sum_{T\in \SSYT(\mu)} q^{\cc(T)} s_{\sh(T)}=\sum_{\nu} \widetilde{K}_{\nu,\mu}(q)s_\nu\end{equation} where $\sh(T)$ is the \textbf{shape} of the tableau $T$, that is, the partition whose $i$-th part is the length of the $i$-th row of $T$ from the bottom, and $s_{\sh(T)}$ is the corresponding Schur function.  Above, $\widetilde{K}_{\nu,\mu}(q)$ is the \textit{$q$-Kostka polynomial}, and $\cc$ is the \textit{cocharge} statistic as defined in Section \ref{sec:background}.

One of the main results of this article generalizes the Lascoux--Sch\"utzenberger formula to the cohomology rings of the \emph{$\Delta$-Springer varieties}, which were recently introduced by Levinson, Woo and the second author~\cite{GLW}.  These graded $S_n$-modules are denoted by $R_{n,\lambda,s}$ and simultaneously generalize both the Garsia-Procesi modules $R_\mu$ and the generalized coinvariant rings $R_{n,k}$ that were defined by Haglund, Rhoades, and Shimozono \cite{HRS} to give an algebraic realization of the Delta Conjecture polynomial $\Delta'_{e_{k-1}}e_n$ at $t=0$.  We obtain this result by connecting the $\Delta$-Springer varieties to the theory of partial resolutions of nilpotent varieties due to Borho and MacPherson \cite{Borho-MacPherson}.

The rings $R_{n,\lambda,s}$, first introduced in \cite{Griffin-Thesis}, are defined for integers $n,s$ and a partition $\lambda$ with $|\lambda|=k\le n$ and $s\ge \ell(\lambda)$. In the special case when $n=|\mu|$, the ring $R_{n,\mu,s}$ coincides with $R_\mu$. When $\lambda = (1^k)$ and $s=k$, the ring $R_{n,\lambda,s}$ coincides with $R_{n,k}$.  Because the common generalization $R_{n,\lambda,s}$ has a geometric interpretation as the cohomology rings of the $\Delta$-Springer varieties 
 $Y_{n,\la,s}$ \cite{GLW}, we refer to them here as the \textit{$\Delta$-Springer modules}.

\subsection{New skewing, charge and cocharge formulas for \texorpdfstring{$R_{n,\la,s}$}{}}
%\begin{thm}\label{thm:skew-formula}
%We have $$\widetilde{H}_{n,\lambda,s}(x;q) = \frac{1}{q^{\binom{s-1}{2}(n-k)}} s_{(n-k)^{s-1}}^\perp \widetilde{H}_\Lambda(x;q).$$  
%\end{thm}

We prove that the graded Frobenius character $\widetilde{H}_{n,\la,s}:=\grFrob(R_{n,\la,s})$ has the following skewing formula.

\begin{thm}\label{thm:skew-formula}
Let $\Lambda = ((n-k)^s) + \lambda$, where addition is computed coordinate-wise. We have $$\widetilde{H}_{n,\lambda,s}(x;q) = \frac{s_{((n-k)^{s-1})}^\perp \widetilde{H}_\Lambda(x;q)}{q^{\binom{s-1}{2}(n-k)}}.$$  
\end{thm}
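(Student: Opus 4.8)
\textit{Proof plan.} By \cite{GLW} the ring $R_{n,\la,s}$ is isomorphic, as a graded $S_n$-module, to the cohomology ring $H^*(Y_{n,\la,s})$ of the $\Delta$-Springer variety, so the theorem is equivalent to a statement about $\grFrob H^*(Y_{n,\la,s})$. The plan is to prove it geometrically, by exhibiting $Y_{n,\la,s}$ as a \emph{generalized Springer fiber} in a partial resolution of the nilpotent cone $\cN\subset\mathfrak{gl}_N$ in the sense of Borho--MacPherson \cite{Borho-MacPherson}, where $N=|\Lambda|=n+(s-1)(n-k)$.

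Concretely, I would take a parabolic $P\subset GL_N$ with Levi factor $L\cong T_n\times GL_m$, $m=(s-1)(n-k)$ -- so that a $P$-flag is a partial flag $0=F_0\subset F_1\subset\cdots\subset F_n\subset\bC^N$ with $\dim F_i=i$ -- together with a rectangular nilpotent orbit $\cO$ in the $GL_m$-factor of $L$ (with $n-k$, respectively $s-1$, equal Jordan blocks, the precise orientation being fixed by the conventions of the Springer correspondence), and form the Borho--MacPherson map $\mu\colon G\times_P\overline{\cO+\fn_P}\to\overline{\mathrm{Ind}_L^G\cO}\subseteq\cN$. The first and, I expect, the main step is then to identify $Y_{n,\la,s}$ with the fiber $\mu^{-1}(x_\Lambda)$ over a nilpotent $x_\Lambda$ of Jordan type $\Lambda$ -- which lies in $\overline{\mathrm{Ind}_L^G\cO}$ because $\Lambda$ is dominated by the induced orbit -- \emph{and} to check that the $S_n$-action on $R_{n,\la,s}$ agrees with the one on $H^*(\mu^{-1}(x_\Lambda))$ coming from the Borho--MacPherson analysis, namely the action of the relative Weyl group $N_{S_N}(W_L)/W_L\cong S_n$. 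I would do this by comparing the partial-flag description of $\mu^{-1}(x_\Lambda)$ -- the variety of partial flags $F_\bullet$ as above with $x_\Lambda F_i\subseteq F_{i-1}$ for $i\le n$ and with the nilpotent induced on $\bC^N/F_n$ constrained to $\overline{\cO}$ -- against the explicit flag-variety model of $Y_{n,\la,s}$ from \cite{GLW}; here the ``batteries'' correspond precisely to the $GL_m$-block of the Levi together with its rectangular orbit constraint.

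Granting this identification, I would extract the formula from the decomposition theorem applied to $R\mu_*\underline{\bQ}$. In type $A$ every summand carries the trivial local system, and, since parabolic induction of Springer sheaves corresponds to induction of symmetric group representations, the multiplicity of $IC(\overline{\cO_\nu})$ is, as an $S_n$-module, obtained from the $S_N$-irreducible $V_\nu$ by restricting along $S_n\times S_m\hookrightarrow S_N$ and extracting the $V_{((n-k)^{s-1})}$-isotypic component for the $S_m$-factor; its Frobenius is therefore $s_{((n-k)^{s-1})}^\perp s_\nu=s_{\nu/((n-k)^{s-1})}$, by the adjunction $\langle f,\,s_\rho\,g\rangle=\langle s_\rho^\perp f,\,g\rangle$. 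The stalks of $IC(\overline{\cO_\nu})$ at $x_\Lambda$ contribute the $q$-Kostka polynomials $\widetilde K_{\nu,\Lambda}(q)$ up to a single overall power $q^{-c}$, so that $\grFrob H^*(\mu^{-1}(x_\Lambda))=q^{-c}\sum_\nu\widetilde K_{\nu,\Lambda}(q)\,s_{\nu/((n-k)^{s-1})}=q^{-c}\,s_{((n-k)^{s-1})}^\perp\widetilde H_\Lambda(x;q)$, using $\widetilde H_\Lambda=\sum_\nu\widetilde K_{\nu,\Lambda}(q)s_\nu$ from \eqref{eq:lascoux-schutzenberger}. Finally the exponent $c$ is pinned down by a dimension count: $\dim(G\times_P\overline{\cO+\fn_P})$ differs from $\dim\widetilde{\cN}$ (the dimension of the full Springer resolution) by $2\,n((n-k)^{s-1})=2\binom{s-1}{2}(n-k)$, the codimension contribution of the rectangular orbit, which forces $c=\binom{s-1}{2}(n-k)$ and completes the proof.

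The principal difficulty is the identification in the second paragraph: matching the intrinsic (Tanisaki-style) presentation of $R_{n,\la,s}$, or the flag model of $Y_{n,\la,s}$ from \cite{GLW}, with the Borho--MacPherson fiber, and -- more delicately -- verifying that the two $S_n$-representations coincide on the nose rather than merely having the same graded Frobenius. A secondary point requiring care is fixing the Springer-correspondence conventions (which orientation of the rectangle, and any sign twists) so that the operator produced is $s_{((n-k)^{s-1})}^\perp$ itself, and tracking the cohomological grading so that the normalizing factor is exactly $q^{\binom{s-1}{2}(n-k)}$. As consistency checks I would verify the degenerate cases $n=|\mu|$ (where $\Lambda=\mu$ and the rectangle is empty, recovering $\widetilde H_\mu$) and $\la=(1^k)$, $s=k$ (where $\Lambda=((n-k+1)^k)$, recovering the rectangular Hall--Littlewood skewing formula for $\grFrob R_{n,k}$), and -- since the paper also gives purely combinatorial arguments in special cases -- I would independently expand $s_{((n-k)^{s-1})}^\perp\widetilde H_\Lambda$ via \eqref{eq:lascoux-schutzenberger} and the Littlewood--Richardson rule to read off the promised cocharge Schur expansion.
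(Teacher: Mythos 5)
Your strategy — realizing $Y_{n,\lambda,s}$ as a Borho--MacPherson generalized Springer fiber $\sP_x^y$ for a parabolic of block type $(1^n,(s-1)(n-k))$ with the rectangular nilpotent orbit $((n-k)^{s-1})$ in the $GL_{(s-1)(n-k)}$-factor of the Levi, and then reading the Frobenius character off the perverse-sheaf analysis of the partial resolution — is exactly the route the paper takes. Your flag-theoretic description of the fiber is correct, your dimension count pinning the normalization to $q^{\binom{s-1}{2}(n-k)}$ is correct, and the steps you flag as the ``principal difficulty'' (identifying the fiber and matching the $S_n$-actions) do appear in the paper as Lemmas~\ref{lem:JT-rect}--\ref{lem:PisY} and the remark following the proof.

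However, there is a genuine gap. Theorem~\ref{thm:BM-iso} — and equivalently any decomposition-theorem computation of $H^*(\sP_x^y)$ starting from the constant sheaf on $\overline{\cO}_y$ — applies only under the hypothesis that $\overline{\cO}_y$ is \emph{rationally smooth at all points of} $\sP_x^y$, and your proposal never addresses this. The condition is nonvacuous: already $\widetilde K_{(2,2),(1^4)}(q)=q^2+q^4$, so by Lusztig's stalk formula~\eqref{eq:IH-Of-Orbit} the orbit closure $\overline{\cO}_{(2,2)}\subset\mathfrak{gl}_4$ has nontrivial local intersection cohomology at the origin — a rectangular orbit closure is \emph{not} automatically rationally smooth. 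Absent rational smoothness near the fiber, the constant sheaf on $\overline{\cO}_y$ is not a shift of its IC sheaf there, and the stalk multiplicities of the $IC(\overline{\cO}_\nu)$ summands of the pushforward do \emph{not} compute $H^*(\sP_x^y)$; the formula you write down is therefore unjustified as stated. The paper closes this gap via Lemmas~\ref{lem:rat-smooth} and~\ref{lem:K-q-power}: Lemma~\ref{lem:JT-containment} shows only orbits $\cO_\mu$ with $\mu\subseteq((n-k)^s)$ can meet the fiber, and Lemma~\ref{lem:K-q-power} shows that for precisely those $\mu$ the Kostka polynomial $\widetilde K_{((n-k)^{s-1}),\mu}(q)$ collapses to the single monomial $q^{\mathbf{n}((n-k)^{s-1})}$, so the local intersection cohomology is trivial exactly where it must be. You would need an argument of this kind to make your proof complete.
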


In the above statement, $s_\nu^\perp$ denotes the adjoint operator to multiplication by $s_\nu$ with respect to the Hall inner product on symmetric functions.

The proof of Theorem \ref{thm:skew-formula} relies heavily on the work of Borho and MacPherson on partial resolutions of the nilpotent cone.  We show that the $\Delta$-Springer varieties $Y_{n,\la,s}$ are instances of the family of varieties studied in their work \cite{Borho-MacPherson}.  We prove a rational smoothness condition that enables us to use a result in \cite{Borho-MacPherson} derived using the theory of perverse sheaves to obtain the Frobenius character.

As an immediate corollary, we have the following simple formula for the symmetric function in Delta Conjecture at $t=0$. We write $\revq$ for the operation of reversing the coefficients of the $q$ polynomial, by setting $q\to q^{-1}$ and multiplying by $q^d$ where $d$ is the degree. We also write $H_\mu(x;q) = \revq(\widetilde{H}_\mu(x;q))$ for the (transformed) Hall-Littlewood symmetric functions.

\begin{corollary}\label{cor:DeltaPerpFormula}
    In the $R_{n,k}$ case, we have
    \[
    \grFrob(R_{n,k}) = \omega\circ \revq(\Delta'_{e_{k-1}}e_n|_{t=0}) = \frac{s_{(n-k)^{k-1}}^\perp \widetilde{H}_{((n-k+1)^k)}(x;q)}{q^{\binom{k-1}{2}(n-k)}}.
    \]
    Equivalently,
    \[
\omega\Delta'_{e_{k-1}}e_n|_{t=0} = s_{((n-k)^{k-1})}^\perp H_{((n-k+1)^k)}(x;q).
    \]
\end{corollary}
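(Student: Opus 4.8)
The plan is to derive Corollary~\ref{cor:DeltaPerpFormula} directly from Theorem~\ref{thm:skew-formula} by specializing to the parameters that produce the generalized coinvariant ring $R_{n,k}$, namely $\lambda = (1^k)$ and $s = k$. First I would unwind the definition of $\Lambda$ in this case: with $k = |\lambda|$ and $n-k$ as the relevant gap, we have $\Lambda = ((n-k)^k) + (1^k) = ((n-k+1)^k)$, a rectangle with $k$ rows of length $n-k+1$. Plugging $\lambda = (1^k)$, $s = k$ into Theorem~\ref{thm:skew-formula} then gives immediately
\[
\widetilde{H}_{n,(1^k),k}(x;q) = \frac{s_{((n-k)^{k-1})}^\perp \widetilde{H}_{((n-k+1)^k)}(x;q)}{q^{\binom{k-1}{2}(n-k)}},
\]
so the first displayed equation of the corollary reduces to identifying $\grFrob(R_{n,k})$ with $\widetilde{H}_{n,(1^k),k}$ and with $\omega \circ \revq(\Delta'_{e_{k-1}}e_n|_{t=0})$. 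The former is the defining special case $R_{n,(1^k),k} = R_{n,k}$ recalled in the introduction; the latter is the theorem of Haglund--Rhoades--Shimozono~\cite{HRS} that $R_{n,k}$ algebraically realizes the Delta Conjecture polynomial at $t=0$, which I would cite rather than reprove.

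Next I would deduce the ``equivalently'' statement by applying $\omega$ and $\revq$ to both sides. The key point is to track how $\revq$ interacts with the skewing operator and the normalization. Applying $\revq$ to the identity $\grFrob(R_{n,k}) = \omega \circ \revq(\Delta'_{e_{k-1}}e_n|_{t=0})$ and then $\omega$ gives $\omega \Delta'_{e_{k-1}}e_n|_{t=0} = \omega \circ \revq(\grFrob(R_{n,k}))$. Now I substitute the skewing formula for $\grFrob(R_{n,k})$. Since $\revq$ multiplies by a power of $q$ determined by the top degree and inverts $q$, and since $s_\nu^\perp$ is a $q$-linear operator that shifts degrees down by $|\nu|$ uniformly, one checks that $\revq$ of a normalized skew $q^{-c}\, s_\nu^\perp \widetilde{H}_\mu(x;q)$ equals $s_\nu^\perp$ applied to $\revq(\widetilde{H}_\mu(x;q)) = H_\mu(x;q)$, provided the normalizing exponent $c = \binom{k-1}{2}(n-k)$ exactly accounts for the degree bookkeeping. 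This is where the constant $\binom{s-1}{2}(n-k)$ in Theorem~\ref{thm:skew-formula} was chosen: it is precisely $\binom{n-1}{2} - \binom{k}{2} - |\nu|$ minus the top degree of $R_{n,k}$, i.e.\ it makes the $\revq$ normalization come out clean. I would verify this exponent match by comparing top degrees: the top degree of $\widetilde{H}_{((n-k+1)^k)}$ is $n\binom{k}{2} + k\binom{n-k}{2}$... more simply, the top degree of $R_{n,k}$ is known to be $\binom{n}{2} - \binom{n-k}{2}$ (or as stated in \cite{HRS}), and the top degree of $\widetilde{H}_\Lambda$ is $n(\Lambda) = \sum \binom{\Lambda'_i}{2}$, so the difference after subtracting $|\nu| = (k-1)(n-k)$ gives exactly $\binom{k-1}{2}(n-k)$.

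The main obstacle, such as it is, is the degree-bookkeeping in the second paragraph: one must be careful that $s_\nu^\perp$ applied before versus after $\revq$ differ exactly by the factor $q^{\binom{s-1}{2}(n-k)}$, and that $\omega$ does not interfere (it commutes with $\revq$ and with $s_\nu^\perp$ only up to replacing $s_\nu^\perp$ by $s_{\nu'}^\perp$ — but here we only need that $\omega \circ s_\nu^\perp = s_\nu^\perp \circ \omega$ fails in general, so I must instead apply $\omega$ to $\widetilde{H}_\Lambda$ on the other side). Let me restate this carefully: from $\omega \Delta'_{e_{k-1}}e_n|_{t=0} = \omega \circ \revq(\grFrob(R_{n,k}))$ and $\grFrob(R_{n,k}) = q^{-c} s_\nu^\perp \widetilde{H}_\Lambda$, I get $\omega \Delta'_{e_{k-1}} e_n|_{t=0} = \omega\,\revq(q^{-c} s_\nu^\perp \widetilde{H}_\Lambda)$. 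Since $\revq$ is an algebra-and-skew-compatible involution up to the degree shift, $\revq(q^{-c} s_\nu^\perp \widetilde{H}_\Lambda) = s_\nu^\perp \revq(\widetilde{H}_\Lambda) = s_\nu^\perp H_\Lambda$ once the exponent $c$ is the correct one, and then I must deal with the leftover $\omega$. In fact the cleanest route is to absorb the $\omega$ into the defining relation differently: by definition $H_\mu = \revq \widetilde{H}_\mu$ and one has $\omega \Delta'_{e_{k-1}}e_n|_{t=0}$ directly equal to a skew of a Hall--Littlewood polynomial because $\grFrob(R_{n,k})$ already incorporates the $\omega$ via the HRS realization. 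So the honest statement of the obstacle is simply: verify the identity $\revq(q^{-\binom{s-1}{2}(n-k)}\, s_\nu^\perp \widetilde{H}_\Lambda(x;q)) = s_\nu^\perp \revq(\widetilde{H}_\Lambda(x;q))$ by a top-degree computation, which is a short but genuinely necessary lemma. Everything else is substitution of known special cases and citations.
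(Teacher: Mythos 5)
Your approach is the intended one: specialize Theorem~\ref{thm:skew-formula} to $\lambda=(1^k)$, $s=k$ (so $\Lambda=((n-k+1)^k)$), cite the identification $R_{n,(1^k),k}=R_{n,k}$ and the Haglund--Rhoades--Shimozono realization $\grFrob(R_{n,k})=\omega\circ\revq(\Delta'_{e_{k-1}}e_n|_{t=0})$, then apply $\revq$ and commute it past the skewing operator. The paper indeed treats this as an immediate consequence of Theorem~\ref{thm:skew-formula}, so you have the right route.

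One bookkeeping slip: you write $\omega\Delta'_{e_{k-1}}e_n|_{t=0}=\omega\circ\revq(\grFrob(R_{n,k}))$, which has one $\omega$ too many. Since $\omega$ and $\revq$ are commuting involutions, applying $\revq$ alone to $\grFrob(R_{n,k})=\omega\circ\revq(\Delta'_{e_{k-1}}e_n|_{t=0})$ already gives
\[
\revq\bigl(\grFrob(R_{n,k})\bigr)=\revq\circ\omega\circ\revq\bigl(\Delta'_{e_{k-1}}e_n|_{t=0}\bigr)=\omega\,\Delta'_{e_{k-1}}e_n|_{t=0},
\]
with no leftover $\omega$ to absorb. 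The rest of your argument then goes through cleanly: since $s_\nu^\perp$ acts only on the symmetric-function coefficients, it commutes with $q\mapsto 1/q$ and with multiplication by powers of $q$, so the identity
\[
\revq\!\left(q^{-\binom{k-1}{2}(n-k)}\,s_{((n-k)^{k-1})}^\perp\widetilde{H}_{((n-k+1)^k)}(x;q)\right)=s_{((n-k)^{k-1})}^\perp\,H_{((n-k+1)^k)}(x;q)
\]
reduces to checking that the degree-normalizing exponent is consistent, namely $\mathbf n\bigl((n-k+1)^k\bigr)-\binom{k-1}{2}(n-k)=\binom{k}{2}+(k-1)(n-k)$, which is the top $q$-degree of $\grFrob(R_{n,k})$ as recorded in the paper's Corollary~\ref{cor:top-degree}. (Your side remark that this degree equals $\binom{n}{2}-\binom{n-k}{2}$ is not correct; that quantity does not equal $\binom{k}{2}+(k-1)(n-k)$ in general.) With these two small corrections the proof is complete.
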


We now provide a combinatorial Schur expansion for $\grFrob(R_{n,\la,s})$ that generalizes Equation \eqref{eq:lascoux-schutzenberger}.  We first make more rigorous the definition of the partition $\Lambda$ mentioned above.

\begin{definition}\label{def:Lambda}
For a fixed $n,\lambda,s$ with $k=|\lambda|\le s$, define $\Lambda_{n,\la,s}$ to be the partition formed by adding an $s\times (n-k)$ rectangle at the left of the diagram of $\lambda$.  In other words  $\Lambda_{n,\la,s}=(n-k+\lambda_1,n-k+\lambda_2,\ldots,n-k+\lambda_r,n-k,\ldots,n-k)$ where there are $s$ parts in total.  As an example, for $n=5$, $\la=(2,1)$, $s=4$, we have  $\Lambda_{n,\la,s}=(5,4,3,3)$.
\end{definition}

\begin{definition}
A \textbf{battery-powered tableau} of parameters $n,\la,s$ consists of a pair $T=(D,B)$ of semistandard Young tableaux, where $B$ is rectangular of shape $(s-1)\times (n-k)$, and the total content of $D$ and $B$ is $\Lambda_{n,\la,s}$.  We call $D$ the \textbf{device} of $T$ and $B$ the \textbf{battery}.  We define the \textbf{shape} of $T$ to be the shape of its device, that is, $\sh^+(T)=\sh(D)$.

We write $\mathcal{T}^+(n,\la,s)$ to denote the set of all battery-powered tableaux of parameters $n,\la,s$.  For $T\in \mathcal{T}^+(n,\la,s)$, we write $\cc(T)$ and $\ch(T)$, respectively to denote the cocharge and charge of the word formed by concatenating the reading words of $D$ and $B$ in that order (see Section \ref{sec:background}).
\end{definition}

\begin{remark}
We will usually draw the battery down-and-right from the device, as in Figure \ref{fig:battery}, so that the device and the battery together form a \textbf{skew tableau} (that is, a tableau of shape $\theta/\rho$, where $\theta/\rho$ is formed by deleting the diagram of a partition $\rho$ from a larger partition $\theta$).  We write this tableau as $T=(D, B)$.
\end{remark}
We prove the following formula for the graded Frobenius character of $R_{n,\lambda,s}$, which was originally conjectured in \cite{GG}.

\begin{thm}\label{conj:main}
We have $$\widetilde{H}_{n,\la,s}(x;q):=\grFrob(R_{n,\la,s})=\frac{1}{q^{\binom{s-1}{2}(n-k)}}\sum_{T\in \mathcal{T}^+(n,\la,s)} q^{\cc(T)}s_{\sh^+(T)}(x).$$ 
\end{thm}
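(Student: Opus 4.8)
The plan is to derive Theorem~\ref{conj:main} from the skewing formula of Theorem~\ref{thm:skew-formula} by unwinding the action of $s_{((n-k)^{s-1})}^\perp$ on $\widetilde{H}_\Lambda$ combinatorially, where $\Lambda = \Lambda_{n,\lambda,s}$. By Theorem~\ref{thm:skew-formula}, it suffices to show that
\[
s_{((n-k)^{s-1})}^\perp \widetilde{H}_\Lambda(x;q) = \sum_{T\in\mathcal{T}^+(n,\lambda,s)} q^{\cc(T)} s_{\sh^+(T)}(x).
\]
First I would apply the Lascoux--Sch\"utzenberger formula \eqref{eq:lascoux-schutzenberger} to $\widetilde{H}_\Lambda$, writing $\widetilde{H}_\Lambda(x;q) = \sum_{U\in\SSYT(\Lambda)} q^{\cc(U)} s_{\sh(U)}(x)$. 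Then the key identity I need is the branching-type rule for the skewing operator: for any symmetric function $f = \sum_\mu c_\mu s_\mu$, one has $s_\rho^\perp f = \sum_\mu c_\mu \sum_{\nu} c^{\mu}_{\rho\nu} s_\nu$, i.e.\ $s_\rho^\perp s_\mu = \sum_\nu c^\mu_{\rho\nu} s_\nu$ where $c^\mu_{\rho\nu}$ is the Littlewood--Richardson coefficient. So after applying $s_{((n-k)^{s-1})}^\perp$ term-by-term, the coefficient of $s_\nu$ becomes $\sum_{U\in\SSYT(\Lambda)} q^{\cc(U)} c^{\sh(U)}_{((n-k)^{s-1}),\,\nu}$.

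The heart of the argument is then a cocharge-preserving bijection. I would realize the Littlewood--Richardson coefficient $c^{\sh(U)}_{((n-k)^{s-1}),\nu}$ via the Remmel--Whitney / Littlewood--Richardson correspondence: it counts skew tableaux, or equivalently, via the classical fact that $c^{\mu}_{\rho\nu}$ equals the number of SSYT of shape $\mu/\nu$ and content $\rho$ whose reading word is a lattice word (Yamanouchi). Since $\rho = ((n-k)^{s-1})$ is a rectangle, a Yamanouchi filling of shape $\mu/\nu$ with content $\rho$ is \emph{unique} when it exists --- the lattice condition forces row $i$ (from the top) to be filled with $i$'s --- so $c^\mu_{\rho\nu}\in\{0,1\}$ and equals $1$ exactly when $\mu/\nu$ contains no two boxes of $((n-k)^{s-1})$'s content in the same column, i.e.\ when $\mu/\nu$ is a horizontal-strip-like "rectangle-extractable" shape. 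This lets me replace the pair (tableau $U$ of shape $\mu$, LR-witness) with a pair: a tableau $D$ of shape $\nu = \sh^+$, obtained essentially by removing the rectangular piece from $U$, together with the battery $B$ of shape $((n-k)^{s-1})$ recording the removed rectangular content. The content bookkeeping $|\Lambda| = |\lambda| + (n-k) + (s-1)(n-k)$ against "content of $D$ plus content of $B$ equals $\Lambda_{n,\lambda,s}$" matches the definition of $\mathcal{T}^+(n,\lambda,s)$ exactly. I would verify that under this bijection $U\mapsto(D,B)$ the cocharge is preserved, $\cc(U) = \cc(T)$ where $\cc(T)$ is the cocharge of the concatenated reading word $\mathrm{word}(D)\cdot\mathrm{word}(B)$; this should follow from the standard behavior of cocharge under the RSK/Littlewood--Richardson machinery, in particular the fact that cocharge is invariant under jeu-de-taquin and that the reading word of $U$ is Knuth-equivalent (after the rectification that separates the $\nu$-part from the rectangular part) to the concatenation $\mathrm{word}(D)\cdot\mathrm{word}(B)$.

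The main obstacle I anticipate is precisely this last cocharge-invariance step: one must pin down the bijection between $\SSYT(\Lambda)$-plus-LR-witness and battery-powered tableaux carefully enough that the concatenated reading word $\mathrm{word}(D)\,\mathrm{word}(B)$ is \emph{plactically equivalent} to $\mathrm{word}(U)$ (or differs from it by a controlled, cocharge-neutral modification), since cocharge is only a plactic invariant and the naive "cut the rectangle out of $U$" map need not literally preserve the reading word. I would handle this by using the reverse-RSK / tableau-switching description: insert the rectangular $B$ into $D$ and check that the result has shape $\Lambda$ and reading word Knuth-equivalent to the concatenation, invoking that cocharge descends to the plactic monoid. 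An alternative, possibly cleaner route --- which I would pursue if the direct bijection gets unwieldy --- is to avoid tableaux altogether on the skewing side: expand $s_{((n-k)^{s-1})}^\perp$ using the dual Pieri/Jacobi--Trudi expression for the rectangular Schur function in terms of $h_\mu^\perp$ operators, use that $h_j^\perp \widetilde{H}_\Lambda$ has a known combinatorial effect (removing a horizontal strip at the level of cocharge tableaux), and then assemble the rectangle row by row, tracking cocharge via the Lascoux--Sch\"utzenberger cyclage. Either way, once the cocharge-preserving bijection $U\leftrightarrow (D,B)$ is established, dividing through by $q^{\binom{s-1}{2}(n-k)}$ --- which accounts for the cocharge contributed internally by the rectangular battery $B$ of shape $((n-k)^{s-1})$ in its canonical (row-$i$-equals-$i$) filling --- yields the stated formula, completing the proof.
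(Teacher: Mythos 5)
Your proposal lands on essentially the same argument as the paper: apply the Lascoux--Sch\"utzenberger expansion to $\widetilde{H}_\Lambda$, push $s_{((n-k)^{s-1})}^\perp$ through term by term via $s_\rho^\perp s_\nu = \sum_\eta c^\nu_{\eta,\rho}s_\eta$, reinterpret the Littlewood--Richardson coefficient as a count of tableau pairs, and conclude by plactic invariance of cocharge. That is precisely the structure of the paper's Section~4 proof, which actually proves the more general statement $s_\rho^\perp \widetilde{H}_\mu = \sum_{T\in\mathcal{T}^+(\rho,\mu)} q^{\cc(T)} s_{\sh^+(T)}$.

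Two remarks on the middle of your argument. First, the Yamanouchi detour is unnecessary and contains a false claim: it is \emph{not} true that the lattice condition forces ``row $i$ from the top to be filled with $i$'s'' in a skew shape (try the unique Littlewood--Richardson filling of $(3,2,1)/(2)$ with content $(2,2)$ --- the top row gets a $2$). The fact $c^\mu_{\rho,\nu}\in\{0,1\}$ for rectangular $\rho$ is true, but for a different reason, and it is not needed here. Second, the crisp statement that makes the bijection and the cocharge preservation immediate --- which your ``reverse-RSK / tableau-switching'' remark is circling around --- is the standard interpretation (Fulton) that for a \emph{fixed} $T$ of shape $\nu$, the number of pairs $(D,B)$ with $\sh(D)=\eta$, $\sh(B)=\rho$, and $D\cdot B = T$ is exactly $c^\nu_{\eta,\rho}$. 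Once you invoke this, $\cc(D\cup B)=\cc(D\cdot B)=\cc(T)$ is automatic from Knuth/jeu-de-taquin invariance of cocharge, and no separate ``cut the rectangle out of $U$'' map needs to be defined or checked for plactic equivalence.
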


\begin{figure}
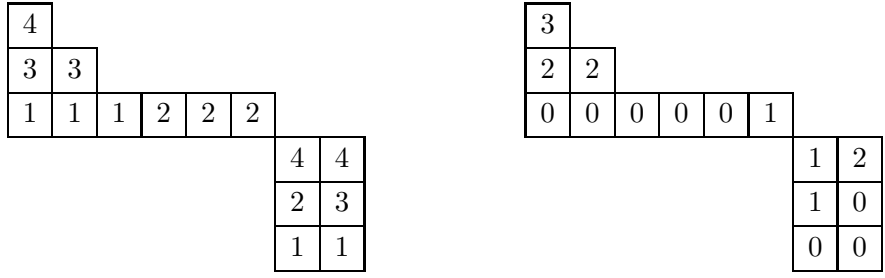

    \centering
\small \begin{ytableau}
 4 \\
 3 & 3  \\ 
 1 & 1 & 1 & 2 & 2 & 2 \\
  \none &\none & \none& \none& \none& \none & 4 & 4 \\
  \none &\none & \none& \none& \none& \none & 2 & 3 \\
  \none &\none & \none& \none& \none& \none &1 & 1
\end{ytableau}\hspace{2cm}
\begin{ytableau}
 3 \\
 2 & 2  \\ 
 0 & 0 & 0 & 0 & 0 & 1 \\
  \none &\none & \none& \none& \none& \none & 1 & 2 \\
  \none &\none & \none& \none& \none& \none & 1 & 0 \\
  \none &\none & \none& \none& \none& \none & 0 & 0
\end{ytableau}
    \caption{At left, a battery-powered tableau $T$ for $n=9$, $\lambda=(3,2,1,1)$, and $s=4$, consisting of a device of shape $(6,2,1)$ and a rectangular battery to its lower right.  The cocharge labels are shown at right, giving $\cc(T)=12$.}
    \label{fig:battery}
\end{figure}

We think of the battery as storing extra charge for the device.  The $q$-exponent $\binom{s-1}{2}(n-k)$  is the largest amount of cocharge that may be stored in the battery.

\begin{example}
Suppose $n=9$, $\la=(3,2,1,1)$, and $s=4$.  Then $\Lambda_{n,\la,s}=(5,4,3,3)$ and an example of a battery-powered tableau is shown in Figure \ref{fig:battery}.  Its cocharge is $12$ and shape is $(6,2,1)$, and the normalization factor in Theorem \ref{conj:main} is $q^{-\binom{3}{2}\cdot 2}=q^{-6}$, so one of the terms of the summation above is $q^{-6}\cdot q^{12}s_{(6,2,1)}=q^6s_{(6,2,1)}$.
\end{example}

In order to prove Theorem \ref{conj:main} from Theorem \ref{thm:skew-formula}, we apply the operator $s_{((n-k)^{s-1})}^\perp$ directly to Equation \eqref{eq:lascoux-schutzenberger}, and in the process, we also obtain the following formula (in the Delta Conjecture case) in terms of Littlewood-Richardson coefficients and $q$-Kostka polynomials.

\begin{corollary}\label{cor:LR-Kostka}
    We have
    \[
    \langle s_\mu, \,\omega\Delta'_{e_{k-1}}e_n|_{t=0}) = \sum_{\nu\vdash k(n-k+1)} c_{\mu,((n-k)^{k-1})}^\nu K_{\nu,((n-k+1)^k)}(q).
    \]
\end{corollary}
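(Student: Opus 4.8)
The plan is to derive Corollary \ref{cor:LR-Kostka} directly from Corollary \ref{cor:DeltaPerpFormula} by pairing both sides with a Schur function $s_\mu$ and expanding the skewing operator. First I would recall that for symmetric functions, $\langle s_\mu, s_\rho^\perp f\rangle = \langle s_\mu s_\rho, f\rangle$, which is the defining adjointness property stated right after Theorem \ref{thm:skew-formula}. Applying this with $\rho = ((n-k)^{k-1})$ and $f = H_{((n-k+1)^k)}(x;q)$, together with the equality $\omega\Delta'_{e_{k-1}}e_n|_{t=0} = s_{((n-k)^{k-1})}^\perp H_{((n-k+1)^k)}(x;q)$ from Corollary \ref{cor:DeltaPerpFormula}, gives
\[
\langle s_\mu,\, \omega\Delta'_{e_{k-1}}e_n|_{t=0}\rangle = \langle s_\mu\, s_{((n-k)^{k-1})},\, H_{((n-k+1)^k)}(x;q)\rangle.
\]

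Next I would expand the product $s_\mu s_{((n-k)^{k-1})}$ in the Schur basis using the Littlewood-Richardson rule, namely $s_\mu s_{((n-k)^{k-1})} = \sum_\nu c_{\mu,((n-k)^{k-1})}^\nu s_\nu$, where the sum is over partitions $\nu$ of size $|\mu| + (k-1)(n-k)$. By homogeneity, only those $\nu$ with $|\nu| = k(n-k+1)$ can contribute a nonzero pairing against the degree-$k(n-k+1)$ symmetric function $H_{((n-k+1)^k)}(x;q)$, so after substituting the LR expansion and using bilinearity of the Hall inner product I obtain
\[
\langle s_\mu,\, \omega\Delta'_{e_{k-1}}e_n|_{t=0}\rangle = \sum_{\nu \vdash k(n-k+1)} c_{\mu,((n-k)^{k-1})}^\nu \,\langle s_\nu,\, H_{((n-k+1)^k)}(x;q)\rangle.
\]
It then remains to identify $\langle s_\nu, H_{((n-k+1)^k)}(x;q)\rangle$ with the $q$-Kostka polynomial $K_{\nu,((n-k+1)^k)}(q)$; this follows from Equation \eqref{eq:lascoux-schutzenberger} combined with the definition $H_\mu(x;q) = \revq(\widetilde{H}_\mu(x;q))$, since reversing the $q$-coefficients of $\widetilde{K}_{\nu,\mu}(q)$ yields $K_{\nu,\mu}(q)$. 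Assembling these pieces completes the proof.

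There is no serious obstacle here: the argument is a routine unwinding of adjointness, the Littlewood-Richardson rule, and the definition of the $q$-Kostka polynomials, all of which are standard. The only point requiring a moment of care is the bookkeeping of degrees — making sure the size constraint $\nu \vdash k(n-k+1)$ is exactly what drops out of matching $|\mu| + (k-1)(n-k)$ against $|((n-k+1)^k)| = k(n-k+1)$, which forces $|\mu| = n$ (consistent with $s_\mu$ being indexed by a partition of $n$, the natural degree in the $R_{n,k}$ setting) — and keeping track of the $\revq$ normalization so that $\widetilde{K}$ becomes $K$ rather than the other way around.
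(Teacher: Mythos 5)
Your proof is correct and follows essentially the same route as the paper's: both unwind the skewing operator via the Littlewood--Richardson rule and identify the resulting pairing with $H_{((n-k+1)^k)}$ as a $q$-Kostka polynomial. The only cosmetic difference is that you work with the charge versions $H_\mu$ and $K_{\nu,\mu}$ directly from Corollary~\ref{cor:DeltaPerpFormula}, whereas the paper works from line~\eqref{eq:LR} in cocharge/$\widetilde{K}$ form and applies $\revq$ as the final step to pass to $K$.
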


By applying $\revq$ to Theorem \ref{conj:main}, we can obtain the following alternative simpler expansion in terms of the generalized charge statistic.

\begin{thm}\label{conj:charge}
  We have $$\revq\left(\widetilde{H}_{n,\la,s}\right)=\revq\left(\grFrob(R_{n,\la,s})\right)=\sum_{T\in \mathcal{T}^+(n,\la,s)} q^{\ch(T)}s_{\sh^+(T)}(x).$$ 
\end{thm}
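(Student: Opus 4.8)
The plan is to derive this from Theorem \ref{conj:main} by applying the $\revq$ operator to both sides and carefully tracking how it interacts with each ingredient. First I would recall the basic compatibility facts: $\revq$ is a linear operation on polynomials in $q$ (with coefficients in the ring of symmetric functions) that sends $q^d$ to $q^{D-d}$ where $D$ is the top degree of the polynomial it is applied to, and that for a single semistandard tableau word $w$ of a fixed content, the charge and cocharge statistics are related by $\ch(w) + \cc(w) = n(\mu)$, where $\mu$ is the content partition and $n(\mu) = \sum_i (i-1)\mu_i = \sum_i \binom{\mu_i'}{2}$. In our setting every battery-powered tableau $T \in \mathcal{T}^+(n,\la,s)$ has the same total content $\Lambda := \Lambda_{n,\la,s}$, so $\ch(T) = n(\Lambda) - \cc(T)$ uniformly across the sum.

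Next I would compute the top $q$-degree $D$ of $\widetilde{H}_{n,\la,s}(x;q)$. From Theorem \ref{conj:main}, the polynomial $q^{\binom{s-1}{2}(n-k)}\widetilde{H}_{n,\la,s}(x;q) = \sum_{T} q^{\cc(T)} s_{\sh^+(T)}(x)$ has top $q$-degree equal to $\max_{T}\cc(T)$. Since $\ch(T)\ge 0$ always and the charge $0$ is attained (by the tableau whose reading word is a reverse lattice/ballot-type word of content $\Lambda$, which does occur among battery-powered tableaux — this needs a small check that such a $T$ genuinely splits into a valid device-plus-rectangular-battery pair), we get $\max_T \cc(T) = n(\Lambda)$. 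Hence the top degree of $\widetilde{H}_{n,\la,s}$ itself is $n(\Lambda) - \binom{s-1}{2}(n-k)$. Applying $\revq$ to Theorem \ref{conj:main} then gives
\[
\revq\left(\widetilde{H}_{n,\la,s}\right) = q^{\,n(\Lambda) - \binom{s-1}{2}(n-k)} \cdot \frac{1}{q^{-\binom{s-1}{2}(n-k)}} \sum_{T} q^{-\cc(T)} s_{\sh^+(T)}(x) \Big/ q^{?},
\]
so — being careful with the normalization constant, which is itself a fixed power of $q$ and commutes with $\revq$ up to the degree bookkeeping — the factors of $q^{\binom{s-1}{2}(n-k)}$ cancel and we are left with $\sum_{T} q^{\,n(\Lambda) - \cc(T)} s_{\sh^+(T)}(x) = \sum_{T} q^{\ch(T)} s_{\sh^+(T)}(x)$, as desired.

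The main obstacle I anticipate is precisely the $\revq$ bookkeeping: $\revq$ is defined relative to the degree of the polynomial it acts on, so one must be scrupulous that the degree of $\widetilde{H}_{n,\la,s}$ equals $n(\Lambda)$ minus the normalization exponent and not something smaller — i.e., that no Schur-positive cancellation lowers the top degree, which is why the existence of a charge-zero battery-powered tableau (equivalently, a cocharge-$n(\Lambda)$ one) must be confirmed. The cleanest way to handle this uniformly is to note that $\revq\big(\sum_T q^{\cc(T)}s_{\sh^+(T)}\big) = \sum_T q^{n(\Lambda)-\cc(T)}s_{\sh^+(T)}$ holds as an identity of polynomials once we know the sum's top degree is $n(\Lambda)$, and that $n(\Lambda) = \binom{s-1}{2}(n-k) + \big(\text{top degree of }\widetilde H_{n,\la,s}\big)$ follows by comparing with the classical fact (from Equation \eqref{eq:lascoux-schutzenberger} and $\widetilde H_\Lambda = \revq H_\Lambda$) that the top $q$-degree of $\widetilde{H}_\Lambda(x;q)$ is $n(\Lambda)$, pushed through the skewing operator $s_{((n-k)^{s-1})}^\perp$ of Theorem \ref{thm:skew-formula}. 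Once this degree identity is in hand, the rest is the uniform substitution $\cc(T)\mapsto n(\Lambda)-\cc(T)=\ch(T)$ term by term, and the theorem follows.
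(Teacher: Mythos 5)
Your approach matches the paper's: apply $\revq$ to Theorem \ref{conj:main} and use the identity $\cc(T)+\ch(T)=\mathbf{n}(\Lambda)$ for words of content $\Lambda$, with the only work being to pin down the top $q$-degree of the right-hand side. The paper does this via Proposition \ref{prop:q-degree} (explicitly constructing the maximum-cocharge battery-powered tableaux), while you propose either exhibiting a charge-$0$ tableau or noting that $s_{((n-k)^{s-1})}^\perp s_\Lambda = s_{\Lambda/((n-k)^{s-1})}\neq 0$; both of your routes are valid and equivalent to the paper's.
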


%Using Sage \cite{sage}, we have tested the above conjectures for all $n,\lambda,s$ such that $n\le 10$ and $s\le \ell(\lambda)+2$ (where $\ell(\lambda)$ is the number of parts of $\lambda$), as well as for $n\le 8$ and $s\le \ell(\lambda)+7$.  

Specializing to the case relevant to the Delta Conjecture, $\la=(1^k)$ and $s=k$, we have a new Schur expansion for the expression in the Delta Conjecture at $t=0$.

\begin{corollary}[of Theorem~\ref{conj:charge}]\label{cor:Rnk}
    We have 
    \[
    \Delta'_{e_{k-1}}e_n|_{t=0} = \sum_{T\in \mathcal{T}^+(n,(1^k),k)} q^{\ch(T)}s_{\sh^+(T)^*}(x),
    \]
    where $\sh^+(T)^*$ is the transpose of the partition $\sh^+(T)$.
\end{corollary}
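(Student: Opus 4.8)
The plan is to derive Corollary~\ref{cor:Rnk} directly from Theorem~\ref{conj:charge} by specializing the parameters to $\lambda = (1^k)$ and $s = k$, and then translating the left-hand side into the Delta Conjecture expression using the transpose (i.e.\ $\omega$) symmetry. First I would recall that when $\lambda = (1^k)$ and $s = k$ we have $R_{n,\lambda,s} = R_{n,k}$, so $\widetilde{H}_{n,(1^k),k} = \grFrob(R_{n,k})$, and by Corollary~\ref{cor:DeltaPerpFormula} this equals $\omega \circ \revq(\Delta'_{e_{k-1}}e_n|_{t=0})$. Applying $\revq$ to both sides and using $\revq \circ \revq = \mathrm{id}$ (on a polynomial whose degree is recorded correctly, which holds here since the degree of each graded piece is unambiguous), Theorem~\ref{conj:charge} gives
\[
\omega\bigl(\Delta'_{e_{k-1}}e_n|_{t=0}\bigr) = \revq\bigl(\grFrob(R_{n,k})\bigr) = \sum_{T \in \mathcal{T}^+(n,(1^k),k)} q^{\ch(T)} s_{\sh^+(T)}(x).
\]

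Next I would apply $\omega$ to both sides. Since $\omega$ is an involution on symmetric functions that sends $s_\nu$ to $s_{\nu^*}$ (the transpose), and $\omega$ is linear over $\mathbb{Q}[q]$, it acts termwise on the right-hand side: $\omega s_{\sh^+(T)}(x) = s_{\sh^+(T)^*}(x)$. This immediately yields
\[
\Delta'_{e_{k-1}}e_n|_{t=0} = \sum_{T \in \mathcal{T}^+(n,(1^k),k)} q^{\ch(T)} s_{\sh^+(T)^*}(x),
\]
which is exactly the claimed identity. So the proof is essentially a two-line bookkeeping argument chaining together Theorem~\ref{conj:charge} (at the specialized parameters), the identification $R_{n,(1^k),k} = R_{n,k}$, and Corollary~\ref{cor:DeltaPerpFormula}.

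The only genuine subtlety — and the step I would be most careful about — is the interaction of $\revq$ with $\omega$ and the placement of $\omega$ in Corollary~\ref{cor:DeltaPerpFormula}: one must check that $\revq$ and $\omega$ commute (they do, since $\omega$ preserves degree and $\revq$ depends only on the top degree $d$ and the substitution $q \mapsto q^{-1}$), and that applying $\revq$ to $\grFrob(R_{n,k}) = \omega\,\revq(\Delta'_{e_{k-1}}e_n|_{t=0})$ correctly recovers $\omega(\Delta'_{e_{k-1}}e_n|_{t=0})$ rather than introducing an extra degree shift. Since Corollary~\ref{cor:DeltaPerpFormula} already packages this normalization, the bookkeeping goes through cleanly. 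I would therefore present the proof as: specialize Theorem~\ref{conj:charge}, invoke $R_{n,(1^k),k} = R_{n,k}$ and Corollary~\ref{cor:DeltaPerpFormula} to identify the left-hand side with $\omega(\Delta'_{e_{k-1}}e_n|_{t=0})$, and apply $\omega$ to pass from $s_{\sh^+(T)}$ to $s_{\sh^+(T)^*}$.
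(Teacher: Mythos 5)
Your proof is correct and is essentially the (unwritten) derivation the paper has in mind: specialize Theorem~\ref{conj:charge} to $\lambda=(1^k)$, $s=k$, invoke $R_{n,(1^k),k}=R_{n,k}$ and the identification $\grFrob(R_{n,k}) = \omega\circ\revq(\Delta'_{e_{k-1}}e_n|_{t=0})$, and push $\omega$ and $\revq$ through. You correctly flagged the one real subtlety --- that $\revq\circ\revq=\mathrm{id}$ and that $\omega$ commutes with $\revq$ --- both of which hold here because $\omega$ preserves $q$-degree and the relevant polynomials have nonzero constant term in $q$ (the degree-zero piece of $R_{n,k}$ is the trivial representation), so no degree shift is introduced.
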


%\begin{corollary}
%We have    \[
%\Delta'_{e_{k-1}}e_n|_{t=0} = \omega\circ \revq\left(\frac{1}{q^{\binom{s-1}{2}(n-k)}} s_{(n-k)^{k-1}}^\perp \widetilde{H}_{(n-k+1)^k}(x;q)\right).
%\]
%\end{corollary}

Since the proof of Theorems \ref{conj:main} and \ref{conj:charge} that we present here is essentially geometric in nature, it is also of interest to find a more direct combinatorial proof, using the existing expansions of $\grFrob(R_{n,\la,s})$ in terms of monomials or sums of Hall-Littlewood polynomials.  The following theorem summarizes some of our progress towards a combinatorial proof.

\begin{prop}\label{thm:results}
There is a direct combinatorial proof of Theorem \ref{conj:main} for:
\begin{itemize}
    \item $s=2$ and any $n,\la$ (see Section \ref{sec:s=2}),
    \item The coefficient of $s_{(n)}$ in the $t=0$ Delta conjecture case (see Section \ref{sec:Rnk}).
\end{itemize}
\end{prop}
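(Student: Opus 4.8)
The plan is to give two independent combinatorial arguments, neither of which uses the geometry behind Theorem~\ref{thm:skew-formula}. In both cases one exploits that the passage from Theorem~\ref{thm:skew-formula} to Theorem~\ref{conj:main}---applying $s_{((n-k)^{s-1})}^\perp$ to \eqref{eq:lascoux-schutzenberger} and expanding the result with the Littlewood--Richardson rule---is already purely combinatorial, so it suffices to establish by hand the relevant instance of the skewing identity $\widetilde{H}_{n,\la,s}(x;q)=q^{-\binom{s-1}{2}(n-k)}\,s_{((n-k)^{s-1})}^\perp\widetilde{H}_\Lambda(x;q)$, starting from a known combinatorial description of $\grFrob(R_{n,\la,s})$---either its monomial expansion or its expansion as a nonnegative combination of modified Hall--Littlewood polynomials $\widetilde{H}_\mu(x;q)$ \cite{HRS,Griffin-Thesis,GLW}.

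\emph{The case $s=2$.} Every ingredient degenerates: $\Lambda$ has at most two rows, the battery $B$ is a single weakly increasing row of length $n-k$, and the normalization $q^{\binom{s-1}{2}(n-k)}=q^{0}$ is trivial. On the tableau side I would send a battery-powered tableau $T=(D,B)$ to the RSK insertion tableau $S$ of the reading word of $D$ followed by that of $B$: inserting the weakly increasing reading word of $B$ adjoins a horizontal strip of size $n-k$, so $S\in\SSYT(\Lambda)$ with $\sh(S)/\sh^+(T)$ a horizontal strip of size $n-k$, and, cocharge being a Knuth (jeu-de-taquin) invariant, $\cc(T)=\cc(S)$. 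Reverse row-insertion along the cells of that strip shows that for each partition $\nu$ this is a bijection from $\{T:\sh^+(T)=\nu\}$ onto $\{S\in\SSYT(\Lambda):\sh(S)/\nu\text{ a horizontal strip of size }n-k\}$; summing $q^{\cc}$, together with the dual Pieri rule and \eqref{eq:lascoux-schutzenberger}, gives $\sum_{T}q^{\cc(T)}s_{\sh^+(T)}=s_{(n-k)}^\perp\widetilde{H}_\Lambda$ unconditionally. It then remains to check $\grFrob(R_{n,\la,2})=s_{(n-k)}^\perp\widetilde{H}_\Lambda$, which is an elementary symmetric-function identity: the two-row modified Hall--Littlewood polynomial is explicit, $\widetilde{H}_{(a,c)}(x;q)=\sum_{j=0}^{c}q^{j}\,s_{(a+c-j,\,j)}(x)$, and $s_{(n-k)}^\perp$ acts by dual Pieri, so one simply matches the result against the known Hall--Littlewood (equivalently monomial) expansion of $\grFrob(R_{n,\la,2})$.

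\emph{The coefficient of $s_{(n)}$ for $\la=(1^k)$, $s=k$.} Here I would evaluate both sides of Theorem~\ref{conj:main} in closed form. On the left, $\langle\grFrob(R_{n,k}),s_{(n)}\rangle=\langle\grFrob(R_{n,k}),h_n\rangle$ is the coefficient of $x_1^{\,n}$, i.e.\ the Hilbert series of the ring of symmetric polynomials inside $R_{n,k}$, which one reads off from the Haglund--Rhoades--Shimozono presentation (or from the monomial expansion of $\grFrob(R_{n,k})$) as the Gaussian binomial coefficient $\binom{n-1}{k-1}_q$. On the right, the battery-powered tableaux $T=(D,B)$ with $\sh^+(T)=(n)$ form a rigid family: $D$ is a single weakly increasing row of length $n$, hence determined by its content $\alpha$ with $\alpha_i\le\Lambda_i=n-k+1$ and $\sum_i\alpha_i=n$, and $B$ is forced to be the (essentially unique) rectangular SSYT of shape $(k-1)\times(n-k)$ of content $\Lambda-\alpha$. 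One computes $\cc(D,B)$ explicitly and checks that, after dividing by $q^{\binom{k-1}{2}(n-k)}$, the generating function $\sum_{T:\sh^+(T)=(n)}q^{\cc(T)}$ is the same Gaussian binomial, via a short bijection with the $(k-1)$-element subsets of $\{1,\dots,n-1\}$.

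The crux in both parts---and the reason the argument does not extend to general $s$---is controlling the cocharge statistic on the nose. Because $\cc$ is not additive under concatenation of reading words, every reduction must be phrased plactically (invariance of cocharge under Knuth moves and jeu de taquin) or carried out by direct computation, and the normalization exponent $\binom{s-1}{2}(n-k)$ has to be recovered exactly. This is tractable for $s=2$ because the battery is a single row, so only horizontal-strip insertions occur and the Lascoux--Sch\"utzenberger theory is transparent, and for the $s_{(n)}$ coefficient because the contributing tableaux depend on essentially one parameter; for a general device shape together with a genuinely rectangular battery neither simplification survives, which is exactly where the geometric proof of Theorem~\ref{thm:skew-formula} becomes necessary.
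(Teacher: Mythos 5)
Your proposal is a genuine alternative route, not a reconstruction of the paper's. In both cases the paper proves the proposition by constructing an explicit weight-preserving bijection, whereas you propose verifying equality of coefficients by computing both sides in closed form. Concretely, for $s=2$ the paper starts from the Hall--Littlewood expansion \eqref{eq:HL-expansion-comp}, reduces matters to finding a shape- and charge-preserving bijection $\mathcal{T}^+(n,\la,2) \to \mathcal{A}(n,\la,2)$, and builds one ($\Phi$, from a composition-rearranging map $\varphi$ followed by relabel-and-unbump). Your plan for $s=2$ is instead to redo the Littlewood--Richardson step of Section~\ref{sec:equiv} (battery insertion $=$ dual Pieri, valid unconditionally since the battery is one row), and then verify the resulting identity $\grFrob(R_{n,\la,2}) = s_{(n-k)}^\perp \widetilde{H}_\Lambda$ by expanding both sides via the explicit two-row formula $\widetilde{H}_{(a,c)} = \sum_j q^j s_{(a+c-j,j)}$ and the known Hall--Littlewood expansion of $\grFrob(R_{n,\la,2})$. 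That is a legitimate, if less structural, path; the work you have to do to make the coefficient comparison land exactly is essentially hidden in what the paper's bijection $\Phi$ makes explicit. For the $s_{(n)}$ coefficient, the paper invokes the crystal/minimaj Schur expansion of \cite{7authors} and builds a bijection $f$ from battery-powered tableaux of device shape $(n)$ to ordered set partitions with reading word $12\cdots n$, matching $\ch$ with $\mathrm{minimaj}$; you propose instead to evaluate both sides as the Gaussian binomial $\binom{n-1}{k-1}_q$. Both are direct combinatorial arguments, so either suffices for the existence claim, but the paper's bijection carries more information and plugs into known machinery.

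One caveat about the $s_{(n)}$ step as you state it. It is true that $\langle \grFrob(R_{n,k}), s_{(n)}\rangle$ equals the Hilbert series of the $S_n$-invariants of $R_{n,k}$ (equivalently the coefficient of $x_1^n$, since $s_{(n)} = h_n$ is dual to $m_{(n)}$), and your claimed evaluation $\binom{n-1}{k-1}_q$ checks out on small cases. But "reads off from the Haglund--Rhoades--Shimozono presentation (or from the monomial expansion)" is glossing over a real computation: the monomial expansion of $\omega\circ\revq(\grFrob R_{n,k})$ gives you the coefficient of $m_{(1^n)}$, not the coefficient of $s_{(1^n)}$, and extracting the trivial isotypic component from the quotient presentation $\mathbb{Q}[x]/(x_i^k,\, e_{n-k+1},\ldots,e_n)$ is not immediate. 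A cleaner route to the left-hand side within your framework would be to pair the Hall--Littlewood expansion \eqref{eq:HL-expansion-comp} with $s_{(n)}$, noting $\langle H_\mu, s_{(n)}\rangle = q^{\mathbf{n}(\mu)}$, and then prove the resulting $q$-binomial identity for the sum over compositions; this still requires a short argument you haven't supplied. Similarly, the right-hand side requires you to actually compute $\cc(D,B)$ for the rigid family of tableaux and show it produces the same $q$-binomial, which is the content of the paper's Lemma~\ref{lem:chRnk} in a different guise. None of this is wrong in spirit, but as written the sketch declares two evaluations equal without supplying the bridging computation that the paper does supply (via the bijections $\Phi$ and $f$).
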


This proposition was stated without full proof details in the conference proceedings article \cite{GG}, and we provide the complete proofs in this paper.   In the companion paper \cite{preprint} to this work, the authors will provide combinatorial proofs of two additional special cases using a new formula in terms of creation operators and the Loehr-Warrington algorithms on \textit{abaci}.

\subsection{Outline}

After establishing background definitions and notation in Section \ref{sec:background}, we prove Theorem~\ref{thm:skew-formula} in Section \ref{sec:MainThm}. We then prove Theorem~\ref{conj:main} and Theorem~\ref{conj:charge} in Section \ref{sec:equiv} and check that the highest degree terms agree with what we would expect.   In Section \ref{sec:s=2}, we give a combinatorial proof of Theorem \ref{conj:charge} at $s=2$, and in Section \ref{sec:Rnk}, we prove it for the $s_{(n)}$ coefficient in the Delta conjecture case. In Section~\ref{sec:conjs}, we give conjectural formulas for the Delta Conjecture symmetric function for $t$ degree at most $2$ in terms of skewing sums of Hall-Littlewood polynomials. Finally, in Section~\ref{sec:next}, we outline potential future research directions.

\subsection{Acknowledgments}

 We thank Brendon Rhoades for inspiring conversations at the start of this work, and Jim Haglund for helpful feedback after a talk on this material. We also thank William Graham and Amber Russell for helpful conversations on partial resolutions.

\section{Background}\label{sec:background}

We now recall some background and definitions on tableaux operations, cocharge and charge, and geometry related to the $\Delta$-Springer varieties.   We refer to \cite{Fulton} for the definition of the basic operation of \textbf{jeu de taquin rectification} on skew semistandard Young tableaux.  

\subsection{Tableaux and insertion}

We write partitions $\lambda=(\lambda_1,\ldots,\lambda_r)$ with their parts nonincreasing: $\lambda_1\ge \lambda_2\ge \cdots \ge \lambda_r$ and write $r=\ell(\lambda)$ for the length of $\lambda$.  We draw them in French notation, with $\lambda_i$ boxes in the $i$-th row from the bottom, and use the shorthand $(a^b)=(a,a,a,\ldots,a)$ to denote the $b\times a$ rectangular partition with $b$ parts of size $a$.  A \textbf{semistandard Young tableau (SSYT)} of \textbf{shape} $\lambda$ is a filling of the boxes of $\lambda$ that weakly increases across rows and strictly increases up columns. As stated in the introduction, we write $\SSYT(\mu)$ for the set of semistandard Young tableaux of \textbf{content} $\mu$ (and any shape).

The \textbf{reading word} of a tableau is the word formed by concatenating the rows from top to bottom.  For instance, the reading word of the battery-powered tableau in Figure \ref{fig:battery} is $$433111222442311.$$ 
%The \textbf{column reading word} is formed by concatenating the columns from left to right, where each column is read from top to bottom.  For instance, the column reading word of the tableau in Figure \ref{fig:battery} is 
%$$431311222421431.$$

The \textbf{RSK insertion} or \textbf{row bumping} of a letter $i$ into a tableau $T$ is the tableau $T'$ formed by inserting $i$ into the bottom row $R_1$ of $T$, where it is placed at the end if $i$ is greater than or equal to every element of $R_1$ and otherwise it replaces the leftmost entry $m$ of $R_1$ that is greater than $i$.  Then $m$ is inserted into the second row $R_2$ in the same manner, and so on until the process is complete and a new entry is added.  RSK insertion is reversible given the final bumped entry \cite{Fulton}, and we call the reverse process \textbf{unbumping}.

We also say the \textbf{RSK insertion} of a tableau $B$ into a tableau $D$ (such as in the case of a battery $B$ and device $D$) is the tableau $T'$ formed by inserting the letters of the reading word of $B$ one at a time into $D$.  We write $T'=D\cdot B$.  It is well-known (see \cite{Fulton}) that $D\cdot B$ is equal to the jeu de taquin rectification of the skew tableau formed by placing $B$ down-and-right of $D$.  We use this equivalence implicitly in this paper.

Two words are \textbf{Knuth equivalent} if their RSK insertions (one letter at a time inserted into the empty tableau from left to right) are equal.  %The column and row reading words of any fixed tableau are Knuth equivalent \cite{Fulton}.

A \textbf{horizontal strip} is a skew shape in which no two boxes appear in the same column.  It is known that RSK inserting a nondecreasing sequence into a tableau $T$ extends the shape of $T$ by a horizontal strip.

\subsection{Symmetric functions}

We work in the ring of symmetric functions over $\mathbb{Q}$ in the countably infinite set of variables $x_1,x_2,x_3,\ldots$, which we often simply abbreviate as $x$.  We refer to \cite{Sagan} for the definitions of the \textbf{Schur functions} $s_\lambda(x)$ and the \textbf{elementary symmetric functions} $e_\lambda(x)$.

%We will often make use of the \textbf{Pieri rules} for multiplying a Schur function by an elementary or homogeneous symmetric function.    A \textbf{vertical strip} is a skew shape in which no two boxes appear in the same row.  Write $\mathrm{Horiz}(m)$ (resp.\ $\mathrm{Vert}(m)$) for the set of all horizontal strips (resp.\ vertical strips) of size $m$.  Then we have $$s_\lambda\cdot h_m=\sum_{\nu/\la\in \mathrm{Horiz}(m)} s_\nu, \hspace{2cm} s_\lambda\cdot e_m=\sum_{\nu/\la\in \mathrm{Vert}(m)} s_\nu.$$

We recall that the \textbf{Hall inner product} is the symmetric inner product $\langle,\rangle$ on the space of symmetric functions for which $\langle s_\la,s_\mu\rangle =\delta_{\lambda\mu}$.  We write $f^\perp$ for the adjoint operator to multiplication by $f$ with respect to the Hall inner product; that is, $$\langle f^\perp( g),h\rangle =\langle g,f\cdot h \rangle. $$ 

It is known that $s_\mu^\perp s_\nu=s_{\nu/\mu}$.  We now observe a representation theoretic meaning of the operator $s_\mu^\perp$ (our statement can essentially be found in different language in \cite{Sagan}, and we include details and proof here for completeness).  In the below statement, the $V_\mu$-\textit{isotypic component} of an $S_n$-module $W$ is the sum of all copies of the irreducible Specht module $V_\mu$ in the decomposition of $W$ into irreducibles.

\begin{lemma}\label{lem:Skewing}
    Given $W$ an $S_n$-module, $S_{n-m}\times S_{m}$ a Young subgroup, and a partition $\mu\vdash m$, then
    \[
    s_\mu^\perp\Frob(W) = \frac{1}{\dim(V_\mu)}\Frob(W^{V_\mu})
    \]
    where $W^{V_\mu}$ is the $V_\mu$-isotypic component of the restriction of $W$ to an $S_{m}$-module, whose Frobenius character is taken as an $S_{n-m}$-module.
\end{lemma}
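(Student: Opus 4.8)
The plan is to prove the identity by testing it against an arbitrary Schur function $s_\nu$ with $\nu \vdash n-m$ under the Hall inner product, thereby reducing both sides to multiplicity computations and invoking Frobenius reciprocity. First I would recall that for a symmetric function $F = \Frob(W) = \sum_\nu a_\nu s_\nu$ one has $a_\nu = \langle F, s_\nu\rangle = \langle W, V_\nu\rangle_{S_n}$, the multiplicity of $V_\nu$ in $W$. So to identify $s_\mu^\perp \Frob(W)$ it suffices to compute $\langle s_\mu^\perp \Frob(W), s_\nu\rangle$ for all $\nu \vdash n-m$ and check it equals $\frac{1}{\dim V_\mu}\langle \Frob(W^{V_\mu}), s_\nu\rangle$, where on the right $W^{V_\mu}$ is regarded as an $S_{n-m}$-module.

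Next I would unwind the left-hand side using the adjointness property: $\langle s_\mu^\perp \Frob(W), s_\nu\rangle = \langle \Frob(W), s_\mu \cdot s_\nu\rangle$. Now $s_\mu s_\nu$ is the Frobenius characteristic of $\mathrm{Ind}_{S_m \times S_{n-m}}^{S_n}(V_\mu \boxtimes V_\nu)$, so by the fact that Frobenius characteristic is an isometry intertwining induction with multiplication, this inner product equals $\langle W, \mathrm{Ind}_{S_m \times S_{n-m}}^{S_n}(V_\mu \boxtimes V_\nu)\rangle_{S_n}$. Applying Frobenius reciprocity, this becomes $\langle \mathrm{Res}_{S_m \times S_{n-m}}^{S_n} W, V_\mu \boxtimes V_\nu\rangle_{S_m \times S_{n-m}}$.

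Then I would analyze the right-hand side. Restricting $W$ first to $S_m$ (ignoring the $S_{n-m}$ factor momentarily), decompose into $S_m$-isotypic components; the $V_\mu$-isotypic component $W^{V_\mu}$ carries a residual action of $S_{n-m}$ that commutes with $S_m$, and as an $S_m \times S_{n-m}$-module it is isomorphic to $V_\mu \boxtimes U$ for some $S_{n-m}$-module $U$ — indeed $U \cong \mathrm{Hom}_{S_m}(V_\mu, \mathrm{Res}\,W)$, whose dimension is $\frac{1}{\dim V_\mu}\dim W^{V_\mu}$ and which is precisely the $S_{n-m}$-module whose Frobenius character appears in the statement (after dividing the module $W^{V_\mu} \cong V_\mu^{\oplus(\dim U)}$-worth of copies appropriately; more carefully, $\Frob(W^{V_\mu})$ as an $S_{n-m}$-module means $\Frob$ applied to $W^{V_\mu}$ with its $S_{n-m}$-structure, which is $(\dim V_\mu)\cdot \Frob(U)$). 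Therefore $\frac{1}{\dim V_\mu}\Frob(W^{V_\mu}) = \Frob(U)$, and $\langle \Frob(U), s_\nu\rangle = \langle U, V_\nu\rangle_{S_{n-m}} = \langle \mathrm{Res}_{S_m\times S_{n-m}} W, V_\mu \boxtimes V_\nu\rangle_{S_m \times S_{n-m}}$, matching the left-hand side. Since this holds for all $\nu$, the two symmetric functions agree.

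The main obstacle, and the point requiring genuine care rather than formula-pushing, is the bookkeeping in the previous step: one must be precise that ``the $V_\mu$-isotypic component of $\mathrm{Res}_{S_m} W$'' is naturally an $S_m \times S_{n-m}$-submodule of the form $V_\mu \boxtimes U$ — this uses that the $S_{n-m}$-action commutes with the $S_m$-action and a version of the double-centralizer / Schur's-lemma argument identifying $\mathrm{Hom}_{S_m}(V_\mu, W)$ with the correct multiplicity space — and that the normalization factor $\frac{1}{\dim V_\mu}$ exactly compensates for the $\dim V_\mu$ copies of each $V_\nu$ appearing in $W^{V_\mu}$ versus $U$. Everything else is the standard dictionary between $\Frob$, induction, and the Hall inner product; I would cite \cite{Sagan} for those facts as the paper suggests and spell out only the isotypic-component identification in detail.
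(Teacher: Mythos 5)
Your argument is correct, but it takes a genuinely different route from the paper's. The paper reduces to $W = V_\nu$ by linearity and then invokes the explicit branching rule $\mathrm{Res}^{S_n}_{S_{n-m}\times S_m}(V_\nu) = \bigoplus_{\lambda\vdash m} V_{\nu/\lambda}\otimes V_\lambda$, reads off that the $V_\mu$-isotypic piece is $(V_{\nu/\mu})^{\oplus \dim V_\mu}$, and concludes from $s_\mu^\perp s_\nu = s_{\nu/\mu}$ in three lines. You instead work with a general $W$ throughout, dualize against an arbitrary $s_\nu$, move $s_\mu$ across with adjointness, translate $\langle \Frob W, s_\mu s_\nu\rangle$ into a $\mathrm{Hom}$-space via the isometry and Frobenius reciprocity, and then identify $W^{V_\mu} \cong V_\mu \boxtimes \mathrm{Hom}_{S_m}(V_\mu, W)$ by the canonical isotypic decomposition for commuting semisimple actions. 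Both are valid; the paper's version is shorter because it leans on the known Specht-module restriction rule, while yours is somewhat more conceptual and never needs the combinatorics of skew Specht modules, at the cost of explicitly invoking Frobenius reciprocity and the double-centralizer decomposition (which the paper's linearity reduction lets it sidestep). You were right to flag the identification $W^{V_\mu} \cong V_\mu \boxtimes U$ with $U = \mathrm{Hom}_{S_m}(V_\mu, \mathrm{Res}\,W)$ as the step needing care; that step is correct and is exactly where the factor $\dim V_\mu$ enters, matching the normalization in the statement.
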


\begin{proof}
    By linearity, it suffices to check the lemma for $W = V_\nu$ where $\nu\vdash n$. In this case,
    \[
    \mathrm{Res}^{S_n}_{S_{n-m}\times S_{m}}(V_\nu) = \bigoplus_{\lambda\vdash m} V_{\nu/\lambda}\otimes V_{\lambda}
    \]
    where $V_{\nu/\lambda}$ is the skew Specht module corresponding to $\nu/\lambda$. Then the $V_\mu$-isotypic component of $V_\nu$ is $(V_{\nu/\mu})^{\oplus \dim(V_\mu)}$.   
    The formula follows since $s_\mu^\perp \Frob(V_\nu) = s_\mu^\perp s_\nu = s_{\nu/\mu}$.
\end{proof}

%The following connection between the adjoint operator and symmetric group representations will be particularly useful for us. Let $\mu$ be a partition of $n$, let $K\geq n$, and let $V$ be an $S_K$-module. Let $S_\mu\subseteq S_K$ be the Young subgroup of $S_K$ corresponding to the composition $(1^{K-n},\mu)$. That is, $S_\mu= S_1\times \cdots \times S_1\times S_{\mu_1}\times S_{\mu_2}\times\cdots \times S_{\mu_{\ell(\mu)}}$. Since $S_\mu$ is a subgroup of $S_K$, it acts on $V$. Let 
%\[V^{S_\mu\text{-anti}} = \{v \in V\,:\, \sigma v = \mathrm{sgn}(\sigma)v \text{ for all }\sigma\in S_\mu\},\]
%which is an $S_{K-n}$-module. 
%When $f=e_\mu$ and $g= \Frob(V)$, by an application of Frobenius reciprocity (see~\cite[Proposition 6.1]{GarsiaProcesi}), it is well known that
%\[
%e_\mu^\perp g = \Frob(V^{S_{\mu}\text{-anti}}).
%\]

Also recall the \textbf{omega involution} on symmetric functions which may be defined as the unique linear operator $\omega$ such that $\omega(s_\lambda) = s_{\lambda^*}$, where $\lambda^*$ is the \textbf{conjugate partition} of $\lambda$.

Given a symmetric function $f(x;q)$ with coefficients in $\mathbb{Q}[q]$, we have the $q$-reversal operator $\revq$ which reverses the coefficients of $f$ as a polynomial in $q$. Precisely, if $f(x;q)$ has $q$ degree $d$ as a polynomial in $q$ with symmetric function coefficients, then $\revq(f(x;q)) = q^d f(x;1/q)$.

\subsection{Charge and cocharge}

We first define cocharge on words, using the reading word of the tableau $T$ in Figure \ref{fig:battery} as a running example: $$4_{\phantom{1}}3_{\phantom{1}}3_{\phantom{1}}1_{\phantom{1}}1_{\phantom{1}}1_{\phantom{1}}2_{\phantom{1}}2_{\phantom{1}}2_{\phantom{1}}4_{\phantom{1}}4_{\phantom{1}}2_{\phantom{1}}3_{\phantom{1}}1_{\phantom{1}}1.$$ 
The \textbf{first cocharge subword} is formed by searching right to left in the reading word for a $1$, then continuing from that position to search for a $2$ (wrapping around the end cyclically if necessary), and so on until we have reached the largest letter of the word:  $$\mathbf{4}_{\phantom{1}}{\color{gray}3}_{\phantom{1}}\mathbf{3}_{\phantom{1}}{\color{gray}1_{\phantom{1}}1_{\phantom{1}}1_{\phantom{1}}2_{\phantom{1}}2_{\phantom{1}}2_{\phantom{1}}4_{\phantom{1}}4_{\phantom{1}}}\mathbf{2_{\phantom{1}}}{\color{gray}3_{\phantom{1}}1_{\phantom{1}}}\mathbf{1.}$$
The \textbf{cocharge labeling} of a permutation is computed by searching right to left cyclically as before, labeling the entries $1,2,3,\ldots$ in order, and starting by labeling the $1$ with a $0$ and incrementing the label if and only if the next entry is to the left of the previous:
\[\mathbf{4}_3
{\color{gray}
3_{\phantom{1}}
}
\mathbf{3}_2
{\color{gray}
1_{\phantom{1}
}
1_{\phantom{1}}
1_{\phantom{1}}
2_{\phantom{1}}
2_{\phantom{1}}
2_{\phantom{1}}
4_{\phantom{1}}
4_{\phantom{1}}}
\mathbf{2}_1
{\color{gray}
3_{\phantom{1}}
1_{\phantom{1}}
}
\mathbf{1}_0.
\]
We then similarly find and label the \textit{second cocharge subword} among the unlabeled letters:
\[
{\color{gray}
4_3
}
\mathbf{3}_2
{\color{gray}
3_2
1_{\phantom{1}}
1_{\phantom{1}}
1_{\phantom{1}}
2_{\phantom{1}}
2_{\phantom{1}}
}
\mathbf{2}_{1}
{\color{gray}
4_{\phantom{1}}
}
\mathbf{4}_{2}
{\color{gray}
2_1
3_{\phantom{1}}
}
{\bf 1}_{0}
{\color{gray}
1_0.
}
\]
\begin{comment}
We iterate this process again on the unlabeled letters:
\[
{\color{gray}
4_3
3_2
3_2
1_{\phantom{1}}
1_{\phantom{1}}
}
\mathbf{1}_0
{\color{gray}
2_{\phantom{1}}
}
\mathbf{2}_{0}
{\color{gray}
2_{1}
}
\mathbf{4}_{1}
{\color{gray}
4_{2}
2_1
}
\mathbf{3}_0
{\color{gray}
1_0
1_0.
}
\]
We have now used all the $3$'s and $4$'s, and the final two cocharge subwords only have length $2$ and $1$, and we label them accordingly:
\end{comment}
We continue to iterate this process on the unlabeled letters until all have been labeled:
\[
4_3
3_2
3_2
1_0
1_0
1_0
2_0
2_{0}
2_{1}
4_{1}
4_{2}
2_1
3_0
1_0
1_0.
\]
\begin{comment}
\begin{figure}
    \centering
    \begin{ytableau}
     3 \\
     2 & 4 \\
     1 & 1 & 1 & 2 & 3
    \end{ytableau}\hspace{1cm}
    \begin{ytableau}
     2 \\
     1 & 2 \\
     0 & 0 & 0 & 0 & 0
    \end{ytableau}\hspace{1cm}
    \begin{ytableau}
     0 \\
     0 & 1 \\
     0 & 0 & 0 & 1 & 2
    \end{ytableau}
    \caption{At left, a semistandard Young tableau $T$ of content $\mu=(3,2,2,1)$ and shape $\sh(T)=(5,2,1)$.  At middle, the cocharge labels of each of the entries of $T$.  At right, the charge labels.  Thus $\cc(T)=2+1+2=5$ and $\ch(T)=1+1+2=4$.}
    \label{fig:SSYT}
\end{figure}
\end{comment}
In Figure \ref{fig:battery}, the cocharge labels on the reading word elements are shown in the corresponding squares at right.   The \textbf{charge labels} are placed in the same order as cocharge labels except we increment when the next element is to the \textbf{right} of the previous.

  The \textbf{cocharge} (resp.\ \textbf{charge}) of $T$, written $\cc(T)$ and $\ch(T)$ respectively, is the sum of the cocharge (resp.\ charge) labels of its reading word.
Therefore, the cocharge of the word above is $3+2+2+1+1+2+1=12$.

Cocharge and charge are invariant under bumping: we have $\ch(D\cdot B)=\cc(T')$ and $\ch(D\cdot B)=\ch(T')$ where $T'$ is the insertion of $B$ into $D$.  This is because RSK insertion preserves the \textit{Knuth equivalence class} of the reading word \cite{Fulton}, and cocharge and charge are invariant under Knuth equivalence \cite{LascouxSchutzenberger}.

The maximum possible cocharge of a semistandard Young tableau of a given content $\nu$ occurs in the unique such tableau that has shape $\nu$ as well.  In this case, the cocharge label of each of the $\nu_i$ entries in the $i$-th row is $i-1$.  This leads to the following definition, which we use frequently throughout.

\begin{definition}
    We define the partition statistic $$\mathbf{n}(\la)=\sum_i (i-1)\la_i.$$ 
\end{definition}

\subsection{Hall-Littlewood polynomials}

We recall the Hall-Littlewood polynomials, which are symmetric functions with coefficients in a parameter $q$.
Given a partition $\mu$ of $n$, the \textbf{transformed Hall-Littlewood polynomial} $H_\mu(x;q)$ is the symmetric function with Schur expansion given by the charge statistic,
\begin{equation}\label{eq:HL-Charge}
H_\mu(x;q) = \sum_{T\in \SSYT(\mu)} q^{\ch(T)} s_{\sh(T)}.
\end{equation}
Alternatively, applying the $\revq$ operator we get the \textbf{modified Hall-Littlewood polynomial}, with Schur expansion given by the cocharge statistic,
\begin{equation}
    \widetilde{H}_\mu(x;q)= \revq(H_\mu(x;q)) = \sum_{T\in \SSYT(\mu)} q^{\cc(T)} s_{\sh(T)}.
\end{equation}
As mentioned in the introduction, the modified Hall-Littlewood polynomial $\widetilde{H}_\mu(x;q)$ is the graded Frobenius character of $R_\mu$, the cohomology ring of the Springer fiber $\sB_\mu$, which we define in the next subsection.

\subsection{Springer fibers and \texorpdfstring{$\Delta$}{}-Springer varieties}

Let $G = GL_K(\bC)$, let $B$ be the Borel subgroup of invertible upper triangular matrices, and let $\sB(K) = G/B$ be the complete flag variety, which may be identified with the space of complete flags $\sB(K) = \{F_\bullet = (F_1\subset F_2\subset\cdots \subset F_K)\mid \dim(F_i) = i, F_K = \bC^K\}$. Let $\cN$ be the nilpotent cone of $K\times K$ nilpotent matrices. 

The group $G$ acts on $\cN$ via the adjoint action, $Ad(g)x \coloneqq gxg^{-1}$.
For $x\in \cN$ nilpotent, we write $\JT(x)$ for the Jordan type of $x$, which is the partition of $K$ recording the Jordan block sizes of $x$ in Jordan canonical form. The set of all $x\in \cN$ with a fixed Jordan type $\mu$ is an orbit of $\cN$ under the adjoint action of $G$, which we denote by $\cO_\mu$.

Given $x\in \cN$, the \textbf{Springer fiber} associated to $x$ is
\[
\sB_x = \{F_\bullet\in \sB(K)\mid xF_i \subseteq F_i\text{ for all }i\}.
\]
  The isomorphism type of $\sB_x$ only depends on $\JT(x)$, and thus we may write $\sB_\mu$ for any $x\in \cO_\mu$.
  
  Springer discovered that these varieties have the remarkable property that the symmetric group $S_K$ acts on the cohomology ring $H^*(\sB_\mu;\bQ)$ and (in Lie type A) the top nonzero cohomology group is an irreducible Specht module,
\[
H^{top}(\sB_\mu;\bQ) \cong V_\mu.
\]
More generally, Hotta and Springer~\cite{HottaSpringer} proved that
\[
\grFrob(H^*(\sB_\mu;\bQ)) = \widetilde{H}_\mu(x;q).
\]

In~\cite{GLW}, Levinson, Woo, and the second author introduced the \emph{$\Delta$-Springer varieties} that generalize the Springer fibers and give a geometric realization of the symmetric function in the Delta Conjecture at $t=0$.

Let $n,\lambda,s$ be as in Definition~\ref{def:Lambda}, and let $K = |\Lambda| = k + (n-k)s = n+(n-k)(s-1)$. Let $x$ be a nilpotent $K\times K$ matrix with Jordan type $\Lambda$, and let $P$ be a parabolic subgroup of $G = GL_K$ with block sizes $\alpha = (1^n,(n-k)(s-1))$, so that $\sP = G/P$ corresponds to partial flags $(F_1\subset F_2\subset\cdots \subset F_n \subset F_{n+1})$ with $\dim(F_i) = i$ for $i\leq n$ and $F_{n+1} = \bC^K$. The \textbf{$\Delta$-Springer varieties} are defined to be 
\[
Y_{n,\la,s} \coloneqq \{F_\bullet \in \sP \mid xF_i\subseteq F_i\text{ for all }i\text{ and }  F_n \supseteq \mathrm{im}(x^{n-k})\}.
\]
Recall that we write $k=|\lambda|$.  When $k=n$ (and $s$ is arbitrary), $Y_{n,\la,s} \cong \sB_\lambda$, so these varieties generalize the Springer fibers. 

Levinson, Woo, and the second author proved that the $\Delta$-Springer varieties $Y_{n,\lambda,s}$ have several geometric and combinatorial properties that generalize those of Springer fibers:
\begin{itemize}
    \item $Y_{n,\lambda,s}$ is equidimensional of dimension $\mathbf{n}(\la) + (n-k)(s-1)$.
    \item There is an $S_n$ action on $H^*(Y_{n,\lambda,s};\bQ)$.
    \item The top cohomology group is a skew Specht module $H^{top}(Y_{n,\lambda,s};\bQ) \cong V_{\Lambda/((n-k)^{s-1})}$.
    \item $H^*(Y_{n,\lambda,s})$ has a presentation as a quotient of the polynomial ring $\bZ[x_1,\dots, x_n]$ which coincides with the ring $R_{n,\la,s}$ introduced in \cite{Griffin-Thesis}. In the special case $\la = (1^k)$ and $s=k$, the cohomology ring coincides with the generalized coinvariant rings of Haglund, Rhoades, and Shimozono, $H^*(Y_{n,(1^k),k};\bQ) = R_{n,k}$.
\end{itemize}
Notably, in the special case when $\la = (1^k)$ and $s=k$, then
\[
\grFrob(H^*(Y_{n,(1^k),k};\bQ)) = \grFrob(R_{n,k})=\omega\circ \revq(\Delta'_{e_{k-1}}e_n|_{t=0}),
\]
so $Y_{n,(1^k),k}$ gives a geometric realization of the symmetric function in the Delta Conjecture at $t=0$ (up to a minor twist).

\subsection{Rational smoothness and intersection cohomology}

\begin{definition}
    A complex variety $X$ of complex dimension $n$ is \textbf{rationally smooth} if either of the following equivalent conditions is satisfied:
    \begin{enumerate}
        \item For all $x\in X$, $H^i(X,X-x;\bQ)$ is $\bQ$ for $i=2n$ and $0$ for $i\neq 2n$.
        \item For all $x\in X$, the local intersection cohomology is trivial, meaning $IH^i_x(X;\bQ) = \bQ$ for $i=0$ and $0$ for $i\neq 0$.
    \end{enumerate}
\end{definition}
Here $IH^*_x$ is the \emph{middle} local intersection cohomology, see \cite{Goresky-MacPherson}. See \cite{Borho-MacPherson} for a proof of the fact that (1) and (2) above are equivalent. We do not define intersection cohomology here, but the essential property of local intersection cohomology that we need is that for $x\in \cO_\mu$,  
\begin{equation}
    \sum_k q^k \dim(IH_x^{2k}(\overline{\cO}_\nu;\bQ)) = q^{-\mathbf{n}(\nu)} \widetilde{K}_{\nu,\mu}(q),\label{eq:IH-Of-Orbit}
\end{equation}
which is a result due to Lusztig \cite{Lusztig-IH}. See also \cite{shoji} for more details and related results.
In particular, \eqref{eq:IH-Of-Orbit} reflects the fact that 
\begin{equation}\label{eq:orbit-closure}
\overline{\cO}_\nu = \bigcup_{\mu\preceq \nu} \cO_\mu,
\end{equation}
where $\preceq$ is dominance order on partitions of the same size, defined by $\mu\preceq \nu$ if $\mu_1 + \cdots +\mu_i \leq \nu_1 + \cdots + \nu_i$ for all $i$ \cite{McGovern}.

We will need the next fact, which follows easily from the Relative K\"unneth Formula for the local cohomology of a product space.
\begin{lemma}\label{lem:rat-smooth-fiber}
    Suppose $f:X\to Y$ is a fiber bundle with fiber $F$ such that both $F$ and $Y$ are rationally smooth. Then $X$ is also rationally smooth.
\end{lemma}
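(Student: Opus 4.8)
The plan is to exploit the local triviality of the fiber bundle to reduce the local cohomology of $X$ to a product, and then to combine the relative K\"unneth formula with the rational smoothness hypotheses on $F$ and $Y$. Fix $x \in X$ and put $y = f(x)$. Since $f$ is a fiber bundle, there is an open neighborhood $U$ of $y$ in $Y$ together with a homeomorphism $f^{-1}(U) \cong U \times F$ sending $x$ to a point $(y,z)$. Local cohomology at a point is unchanged under passing to an open neighborhood (by excision, since the complement of the neighborhood is closed and misses the point), so I can compute
$$H^i(X, X - x; \bQ) \;\cong\; H^i\big(U \times F,\ (U\times F) - (y,z);\ \bQ\big),$$
and likewise $H^i(U, U - y; \bQ) \cong H^i(Y, Y - y; \bQ)$.

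Next I would observe that, as a pair, $\big(U \times F,\ (U \times F) - (y,z)\big)$ is precisely the product of pairs $(U, U-y) \times (F, F-z)$, because the complement of $(y,z)$ in $U \times F$ is $(U-y)\times F \cup U \times (F-z)$. The relative K\"unneth formula over the field $\bQ$ (no Tor terms) then gives
$$H^n(X, X - x; \bQ) \;\cong\; \bigoplus_{i+j=n} H^i(Y, Y - y; \bQ)\otimes_\bQ H^j(F, F - z; \bQ).$$
By rational smoothness of $Y$, the group $H^i(Y, Y - y;\bQ)$ is $\bQ$ for $i = 2\dim_\bC Y$ and $0$ otherwise; by rational smoothness of $F$, the group $H^j(F, F - z;\bQ)$ is $\bQ$ for $j = 2\dim_\bC F$ and $0$ otherwise. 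So the direct sum has exactly one nonzero term, occurring in degree $n = 2\dim_\bC Y + 2\dim_\bC F = 2\dim_\bC X$, where it is $\bQ$. Since $x$ was arbitrary, condition (1) in the definition of rational smoothness holds for $X$.

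The only real subtlety — the closest thing to an obstacle — is checking that the relative K\"unneth formula genuinely applies in this setting: one needs the pairs in question to be excisive/CW-like, which holds because complex algebraic varieties are locally compact and triangulable, and one uses that over a field the K\"unneth short exact sequence splits without a Tor correction. I would also remark that the identity $\dim_\bC X = \dim_\bC Y + \dim_\bC F$ is simply additivity of complex dimension for a fiber bundle, and that if $Y$ is not equidimensional the same argument runs pointwise with $\dim_\bC Y$ replaced by the local dimension of $Y$ at $y$.
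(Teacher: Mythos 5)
Your proof is correct and follows exactly the route the paper indicates: the paper does not spell out a proof, only remarking that the lemma ``follows easily from the Relative K\"unneth Formula for the local cohomology of a product space,'' which is precisely the reduction to a local trivialization $f^{-1}(U)\cong U\times F$ followed by the relative K\"unneth decomposition that you carry out. Your extra remarks on excision, the vanishing of Tor over $\bQ$, and additivity of dimension are exactly the details the paper elides, so the two arguments coincide.
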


\subsection{Borho and MacPherson's partial resolutions}

Let $P$ be a parabolic subgroup, and let $\sP = G/P$ be the corresponding partial flag variety. Let $L$ be the Levi subgroup associated to $P$, let $\cN_L$ be the nilpotent cone of $L$, and finally let $\fp = \fl \oplus \fn$ be the Levi decomposition of $\fp = \Lie(P)$, where $\fl = \Lie(L)$ and $\fn$ is the nilradical of $\fp$.

Explicitly, $P$ is the set of invertible block upper triangular
matrices with block sizes given by some composition $\alpha$ of $K$, $G/P$ is the variety of partial flags $(V_1\subseteq V_2\subseteq \cdots \subseteq V_\ell)$ of $\bC^K$ with $\dim(V_i/V_{i-1}) = \alpha_i$ for all $i$, and $L$ is the subgroup of invertible block diagonal matrices with block sizes given by $\alpha$. The Lie algebra $\cN_L$ is the set of nilpotent block diagonal matrices, $\fp$ is the set of block upper triangular matrices, and $\fl$ is the set of block diagonal matrices, with block sizes given by the parts of $\alpha$. 

\begin{example}
    For $K=7$ and $P$ the parabolic subgroup with block sizes $\alpha = (3,1,1,2)$, the Levi decomposition $\fp = \fl \oplus \fn$ has the form
    \[
    \begin{bmatrix} 
    \ast & \ast & \ast & \ast & \ast & \ast & \ast\\
    \ast & \ast & \ast & \ast & \ast & \ast & \ast \\
    \ast & \ast & \ast & \ast & \ast & \ast & \ast \\
    0    &    0 &    0 & \ast & \ast & \ast & \ast \\
    0    & 0    & 0    &  0   & \ast & \ast & \ast \\
     0   & 0    & 0    & 0    & 0    & \ast & \ast \\
     0   & 0    & 0    & 0    & 0    & \ast & \ast 
    \end{bmatrix}
     = 
    \begin{bmatrix} 
    \ast & \ast & \ast &    0 &    0 &    0 &    0 \\
    \ast & \ast & \ast &    0 &    0 &    0 &    0 \\
    \ast & \ast & \ast &     0&    0 &    0 &    0 \\
    0    &    0 &    0 & \ast &    0 &    0 &    0 \\
    0    & 0    & 0    &  0   & \ast &    0 &    0 \\
     0   & 0    & 0    & 0    & 0    & \ast & \ast \\
     0   & 0    & 0    & 0    & 0    & \ast & \ast 
    \end{bmatrix}
     \oplus
     \begin{bmatrix} 
    0    &0     & 0    & \ast & \ast & \ast & \ast\\
    0    &0     & 0    & \ast & \ast & \ast & \ast \\
    0    &0     & 0    & \ast & \ast & \ast & \ast \\
    0    &    0 &    0 & 0    & \ast & \ast & \ast \\
    0    & 0    & 0    &  0   & 0    & \ast & \ast \\
     0   & 0    & 0    & 0    & 0    & 0    & 0    \\
     0   & 0    & 0    & 0    & 0    & 0    & 0    
    \end{bmatrix}
    \]
\end{example}

Borho and MacPherson defined the \emph{partial resolutions} of the nilpotent cone, defined by
\[
\xi:\widetilde{\mathcal{N}}^P \coloneqq G\times_P (\mathcal{N}_L + \mathfrak{n}) \to \mathcal{N},
\]
where $\xi(g,x) = \Ad(g)x=gxg^{-1}$. Here, the $\times_P$ notation denotes that we are taking the quotient of the product space by the $P$ action $p\cdot (g,x) = (gp^{-1},\Ad(p)x)$.
In type A, $\widetilde{\cN}^P$ has the following alternative description in terms of partial flags,
\[
\widetilde{\cN}^P \cong \{(F_\bullet,x)\in G/P\times \cN \mid xF_i\subseteq F_i \text{ for all }i\},
\]
where $\xi$ is the projection onto the second factor.  In particular, when $P=B$ then $\cN_L = 0$ and $\fn$ are the strictly-upper triangular matrices, and hence we recover the usual Springer resolution, which we denote by $\pi : \widetilde{\cN} = \widetilde{\cN}^B\to \mathcal{N}$.
%They proved that $\widetilde{\cN}^P$ is rationally smooth and that the map $\xi$ is semismall. 

Given $t\in \cN_L$, let $\cO_t = \Ad(L)t$. Let $y = (1,t + u)\in \widetilde{\cN}^P$ for arbitrary $u\in \fn$. 
 The subspaces
\[
\cO_y \coloneqq G\times_P (\cO_t + \fn)
\]
partition $\widetilde{\cN}^P$ as $t$ varies over $t\in \cN_L$. Since $\cO_y$ is a fiber bundle over $G/P$ with fiber $\cO_t+\fn$, taking the closure we have 
\begin{equation}\label{eq:Oy-closure}
    \overline{\cO}_y = G\times_P(\overline{\cO}_t + \fn),
\end{equation}
which can be seen by taking the closure on each trivializing open subset of $G/P$.

The usual Springer resolution $\pi:\widetilde{\cN}\to \cN$ factors through $\xi$. Letting $\xi_y$ be the restriction of $\xi$ to $\overline{\cO}_y$, we have the following commutative diagram.
\[
\begin{tikzcd}
     & \widetilde{\cN}\arrow[d,"\eta"]\arrow[bend left =40,dd,"\pi"]\\
    \overline{\cO}_y \arrow[r, hook] \arrow[d,"\xi_y"] & \widetilde{\cN}^P\arrow[d,"\xi"]\\
    \cN \arrow[r,"="] &\cN
\end{tikzcd}
\]
Given $x\in \cN$, the \textbf{generalized Springer fiber} is $\sP_x^y\coloneqq \xi_y^{-1}(x) = \overline{\cO}_y\cap \xi^{-1}(x)$. Note that the ordinary Springer fibers $\sB_x$ are recovered when $P=B$ is the full Borel subgroup (and $y=0$). 

In type A, the variety $\sP_x^y$ can alternatively be described in terms of partial flags as follows. Given $(g,x')\in \sP_x^y$, let $F_\bullet\in \sP$ be the partial flag corresponding to $gP$. That is, letting $e_1,\dots, e_K$ be the standard basis vectors of $\bC^K$ (not to be confused with the elementary symmetric polynomials), then $F_i = \mathrm{span}\{ge_1,\dots,ge_{\alpha_1+\cdots + \alpha_i}\}$. Since $(g,x')\in \sP_x^y$, then by definition $x = \Ad(g)x'$, and it can be checked that $xF_i\subseteq F_i$ for all $i$. Thus, $x$ induces a nilpotent endomorphism of $F_i/F_{i-1}$ for all $i$, which we denote by $x|_{F_i/F_{i-1}}$. Letting $t = t_1+\cdots +t_\ell$ be the block decomposition of $t$, it then follows from \eqref{eq:Oy-closure} that
\begin{equation}\label{eq:P-as-flags}
\sP_x^y \cong \{F_\bullet\in \sP \mid xF_i \subseteq F_i \text{ for all }i \text{ and } \JT(x|_{F_i/F_{i-1}}) \preceq \JT(t_i)\text{ for all }i\}.
\end{equation}

Let $\sB(L)_t$ be the Springer fiber of $t$ in the flag variety $\sB(L)\cong \sB(\alpha_1)\times \cdots \times \sB(\alpha_\ell)$ for the group $L$.  In other words,  
\[
\sB(L)_t \cong (\sB(\alpha_1))_{t_1}\times \cdots \times (\sB(\alpha_\ell))_{t_\ell}.
\]
Borho and MacPherson showed that $\eta^{-1}(y)\cong \sB(L)_t$. We write $d_y = \dim_\bC(\eta^{-1}(y)) = \dim_\bC(\sB(L)_t)$.

Let $\rho(t,1)$ be the irreducible representation of $W_L = S_{\alpha_1}\times \cdots \times S_{\alpha_\ell}$ on $H^{top}(\sB(L)_t;\bQ)$. In other words, $\rho(t,1) \cong V_{\JT(t_1)}\otimes \cdots \otimes V_{\JT(t_\ell)}$ as a $W_L$-module. Given a $W$-module $V$, recall that $V^{\rho(t,1)}$ is the isotypic component corresponding to $\rho(t,1)$ of the restriction of $V$ to a $W_L$-module. Observe that the ``partial Weyl group'' $W^P = N_G(L)/L$ of permutations of the blocks of $L$ of equal size acts on $V^{\rho(t,1)}$.

\begin{thm}[\cite{Borho-MacPherson}]\label{thm:BM-iso}
    The partial Weyl group $W^P = N_G(L)/L$ acts on $H^*(\sP_x^y;\bQ)$, and the Springer action of $W = S_K$ restricts to an action of $W^P$ on $H^*(\sB_x;\bQ)^{\rho(t,1)}$. 
    
    Furthermore, if $\overline{\cO}_y$ is rationally smooth at all points of $\sP_x^y$, then there is a graded isomorphism of $W^P$-modules
    \[
        H^i(\sP_x^y;\bQ) \otimes H^{2d_y}(\sB(L)_t;\bQ) \cong H^{i+2d_y}(\sB_x;\bQ)^{\rho(t,1)}
    \]
    for all $i$, where $W^P$ acts trivially on the second factor of the tensor product.
\end{thm}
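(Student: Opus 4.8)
The plan is to derive Theorem~\ref{thm:BM-iso} from the decomposition theorem applied to the Springer resolution, using the factorization $\pi=\xi\circ\eta$ appearing in the commutative diagram above. Write $\mathbf{S}=R\pi_*\bQ_{\widetilde{\cN}}[\dim_\bC\widetilde{\cN}]$ for the Springer sheaf on $\cN$: the Springer resolution is semismall, so $\mathbf{S}$ is a semisimple perverse sheaf with $\mathrm{End}(\mathbf{S})\cong\bQ[W]$ for $W=S_K$; in type $A$ the relevant local systems are trivial, so $\mathbf{S}\cong\bigoplus_\nu V_\nu\otimes\mathrm{IC}(\overline{\cO}_\nu;\bQ)$; and the stalk of $\mathbf{S}$ at $x\in\cO_\mu$ recovers $H^*(\sB_x;\bQ)$ (up to a degree shift) with its Springer $W$-action \cite{Borho-MacPherson,Lusztig-IH}. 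The two $W^P$-actions in the statement are set up separately: on $H^*(\sB_x;\bQ)^{\rho(t,1)}$, conjugation by $N_W(W_L)$ permutes equal-size blocks and hence fixes $\rho(t,1)=V_{\JT(t_1)}\otimes\cdots\otimes V_{\JT(t_\ell)}$ up to isomorphism, so Springer's $W$-action restricts to an action of $W^P=N_W(W_L)/W_L$ on the $\rho(t,1)$-isotypic subspace; on $H^*(\sP_x^y;\bQ)$, one runs a relative Springer construction for the proper map $\xi_y:\overline{\cO}_y\to\cN$ (equivalently, uses the decomposition theorem for $\xi_y$ and the convolution action on $R\xi_{y,*}\mathrm{IC}(\overline{\cO}_y;\bQ)$) together with the residual action of $N_G(L)/L$ on $\overline{\cO}_y$ permuting equal blocks of $L$. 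Neither construction uses rational smoothness.

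The heart of the argument is the sheaf-level identity
\[
\mathbf{S}^{\rho(t,1)}\;\cong\;R\xi_{y,*}\,\mathrm{IC}(\overline{\cO}_y;\bQ)[2d_y]
\]
of $W^P$-equivariant complexes on $\cN$, for the two $W^P$-actions just described. To prove it I would push the factorization $\mathbf{S}=R\xi_*R\eta_*\bQ_{\widetilde{\cN}}[\dim_\bC\widetilde{\cN}]$ through the decomposition theorem in two stages. By \eqref{eq:Oy-closure}, the restriction of $\eta$ over the stratum $\cO_y$ is a fiber bundle whose fiber over $y$ is $\eta^{-1}(y)\cong\sB(L)_t$, a product of Springer fibers for the Levi $L$; the Springer $W_L$-action makes $H^*(\sB(L)_t;\bQ)$ into a graded $W_L$-module in which $H^{2d_y}(\sB(L)_t;\bQ)\cong\rho(t,1)$ is the unique constituent isomorphic to $\rho(t,1)$ (it is the product of the tops of the Levi Springer fibers). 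Passing to the $\rho(t,1)$-isotypic part of the $W_L$-action on $R\eta_*\bQ$ therefore kills the lower fiber cohomology on each stratum, and after applying $R\xi_*$ one is left with $R\xi_{y,*}$ of the constant-coefficient $\mathrm{IC}$ sheaf on $\overline{\cO}_y$, shifted by the fiber dimension $2d_y$, with the action of $N_W(W_L)/W_L$ on isotypic multiplicity spaces matching the action of $N_G(L)/L$ on $\overline{\cO}_y$. Justifying that only the trivial local system appears on each $\cO_y$ (via the Springer correspondence for $L$) and keeping track of shifts and equivariance is the crux.

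Given the identity, I would take stalks at $x\in\cO_\mu$. The stalk of the left side is $H^*(\sB_x;\bQ)^{\rho(t,1)}$ with its shift, since taking $\rho(t,1)$-isotypic parts is exact. By properness of $\xi_y$ and proper base change, the stalk of $R\xi_{y,*}\mathrm{IC}(\overline{\cO}_y;\bQ)$ at $x$ is the hypercohomology of $\sP_x^y=\xi_y^{-1}(x)$ with coefficients in $\mathrm{IC}(\overline{\cO}_y;\bQ)|_{\sP_x^y}$, that is, $IH^*(\sP_x^y;\bQ)$. Here the rational smoothness hypothesis enters: since $\overline{\cO}_y$ is rationally smooth at every point of $\sP_x^y$ and rational smoothness is an open condition, $\mathrm{IC}(\overline{\cO}_y;\bQ)$ agrees with the shifted constant sheaf on a neighborhood of $\sP_x^y$ (condition (2) in the definition above), whence $IH^*(\sP_x^y;\bQ)=H^*(\sP_x^y;\bQ)$; properness of $\xi_y$ lets one choose such a neighborhood of the form $\xi_y^{-1}(U)$. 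Reading off the $2d_y$ shift as $\otimes\,H^{2d_y}(\sB(L)_t;\bQ)$, a space on which $W^P$ acts trivially, yields the claimed graded $W^P$-isomorphism $H^i(\sP_x^y;\bQ)\otimes H^{2d_y}(\sB(L)_t;\bQ)\cong H^{i+2d_y}(\sB_x;\bQ)^{\rho(t,1)}$ for all $i$.

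The main obstacle is the middle step: establishing the sheaf-level identity with its $W^P$-equivariance for the two separately defined actions. This requires tracking the decomposition theorem carefully through both stages of $\pi=\xi\circ\eta$, identifying (via the Levi's own Springer correspondence) the trivial local systems on the strata $\cO_y$, and verifying compatibility of the normalizer action on $\widetilde{\cN}^P$ with the residual action on multiplicity spaces. By comparison, the remaining ingredients---the stalk computation, the reduction from $IH^*$ to $H^*$ via rational smoothness, and (if one wants the explicit $\widetilde{K}_{\nu,\mu}(q)$ statements used later in the paper) Lusztig's formula \eqref{eq:IH-Of-Orbit}---are routine.
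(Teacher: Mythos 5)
This is a cited result: the paper attributes the theorem to~\cite{Borho-MacPherson} and does not include a proof of it, so there is no ``paper's own proof'' against which to compare. What you have written is a plan for reproving the Borho--MacPherson theorem via the decomposition theorem, which is the route the original reference takes (and the one used in subsequent expositions such as Lusztig's). The skeleton is sound: factor $\pi=\xi\circ\eta$, exploit that $\eta$ is a fiber bundle with Springer-fiber fibers over each stratum $\cO_y$ of $\widetilde{\cN}^P$, use the Springer correspondence for $L$ (which in type~$A$ forces trivial local systems) to identify the $\rho(t,1)$-isotypic piece of $R\eta_*\bQ$ with $\mathrm{IC}(\overline{\cO}_y)[2d_y]$ up to a multiplicity space, then apply $R\xi_*$, take stalks via proper base change, and finally convert $IH^*(\sP_x^y)$ to $H^*(\sP_x^y)$ using the rational smoothness hypothesis together with the fact that rational smoothness is an open condition and $\xi_y$ is proper. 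The degree-shift bookkeeping in your last paragraph is correct, and you are right that $H^{2d_y}(\sB(L)_t;\bQ)\cong\rho(t,1)$ is the top piece appearing with multiplicity one.

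You flag the real work yourself: establishing the $W^P$-equivariant sheaf identity $\mathbf{S}^{\rho(t,1)}\cong R\xi_{y,*}\mathrm{IC}(\overline{\cO}_y;\bQ)[2d_y]$ with the two a priori separately defined $W^P$-actions matching. As written this is a plan, not a proof; the verification that the decomposition theorem for $\eta$ contributes $\mathrm{IC}(\overline{\cO}_y)$ with the right multiplicity local system, that no other strata $\cO_{y'}$ contribute to the $\rho(t,1)$-isotypic part in the relevant degree, and that the normalizer action on multiplicity spaces agrees with the geometric $N_G(L)/L$-action on $\overline{\cO}_y$ are precisely the nontrivial steps carried out in~\cite{Borho-MacPherson}. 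So your proposal is a faithful high-level reconstruction of the cited argument, with the hard technical content deferred; given that the paper itself defers the entire theorem to the reference, that is a reasonable level of detail, but it should be presented as a sketch of Borho--MacPherson's proof rather than as an independent one.
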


\section{Proof of the main theorm}\label{sec:MainThm}

In this section, we prove Theorem~\ref{thm:skew-formula} using the geometry of Borho--MacPherson partial resolutions. Readers interested in the combinatorial applications of the formula may skip to Section~\ref{sec:equiv}.

We begin with a technical lemma that will help us apply Theorem \ref{thm:BM-iso} to our setting of $\Delta$-Springer varieties.

\begin{lemma}\label{lem:JT-rect}
    Let $\cO_\mu$ be the $\Ad(G)$-orbit of elements of $\cN$ with Jordan type $\mu$. For $\mu$ a rectangular partition $\mu = (a^b)$ (so $n=ab$) then 
    \[
    \overline{\cO}_\mu = \bigcup_{\substack{\nu\vdash n,\\ \nu_1\leq a}} \cO_\nu.
    \]
\end{lemma}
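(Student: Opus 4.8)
The plan is to reduce the statement to a purely combinatorial fact about dominance order, using the description of nilpotent orbit closures recalled in Equation~\eqref{eq:orbit-closure}. By that equation, $\overline{\cO}_{(a^b)} = \bigcup_{\nu \preceq (a^b)} \cO_\nu$, the union being over partitions $\nu\vdash n$ with $\nu\preceq (a^b)$ in dominance order. So it suffices to show: for $\nu\vdash n = ab$, we have $\nu\preceq (a^b)$ if and only if $\nu_1\le a$.

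The forward direction is immediate: taking $i=1$ in the definition of dominance order gives $\nu_1\le ((a^b))_1 = a$. For the converse, I would argue as follows. Suppose $\nu_1\le a$. Since the parts of $\nu$ are weakly decreasing, $\nu_j\le\nu_1\le a$ for every $j$, and hence $\nu_1+\cdots+\nu_i\le ia$ for all $i$. Comparing with the partial sums of $(a^b)$: for $i\le b$ these equal $ia$, so the required inequality $\nu_1+\cdots+\nu_i\le ((a^b))_1+\cdots+((a^b))_i$ holds; and for $i> b$ the $i$-th partial sum of $(a^b)$ equals $ba=n$, which trivially dominates $\nu_1+\cdots+\nu_i\le|\nu|=n$. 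Thus $\nu\preceq(a^b)$, which gives the equivalence and hence the lemma.

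Since this argument is elementary, there is essentially no genuine obstacle; the only point requiring care is that dominance order compares partitions of the same size, so all the partial-sum inequalities automatically become equalities once $i\ge b$, and the rectangular shape of $(a^b)$ makes the single bound $\nu_1\le a$ exactly the binding constraint. One could alternatively invoke a known closed-form description of $\overline{\cO}_{(a^b)}$ as a rank variety, but routing through \eqref{eq:orbit-closure} keeps the proof self-contained given the background already developed.
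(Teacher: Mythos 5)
Your proof is correct and follows essentially the same route as the paper: both reduce via \eqref{eq:orbit-closure} to the combinatorial equivalence $\nu\preceq(a^b)\iff\nu_1\le a$, and prove the nontrivial direction by observing $\nu_j\le\nu_1\le a$ forces $\nu_1+\cdots+\nu_i\le ia$. Your extra remark about $i>b$ is a minor point of care that the paper elides, but there is no substantive difference.
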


\begin{proof}
    By \eqref{eq:orbit-closure}, the statement of the lemma is equivalent to: $\nu\preceq (a^b)$ if and only if $\nu_1\leq a$. In the forward direction, if $\nu\preceq (a^b)$, then $\nu_1\leq a$ follows by definition of dominance order. For the converse, suppose that $\nu_1\leq a$, so that $\nu_i\leq a$ for all $i$, since $\nu$ is a partition. Then $\nu_1+\cdots + \nu_i \leq a\cdot i$, which is the sum of the first $i$ parts of $(a^b)$, so the lemma follows.
\end{proof}

\begin{lemma}\label{lem:JT-containment}
    Let $x$ be a nilpotent $K\times K$ matrix such that $\JT(x) = \Lambda_{n,\la,s}$, and let $F_\bullet\in Y_{n,\la,s}$. Letting $x|_{\bC^K/F_n}$ be the nilpotent endomorphism of $\bC^K/F_n$ induced by $x$, we have $\JT(x|_{\bC^K/F_n}) \subseteq ((n-k)^s)$. 
\end{lemma}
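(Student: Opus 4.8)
The plan is to check that the Young diagram of $\JT(x|_{\bC^K/F_n})$ fits inside the $s\times(n-k)$ rectangle, which amounts to two separate bounds: that its largest part is at most $n-k$, and that it has at most $s$ parts. Throughout, write $V = \bC^K/F_n$, let $\pi\colon \bC^K\to V$ be the projection, and let $y = x|_V$, so that $y\pi = \pi x$; the map $y$ is well-defined and nilpotent precisely because $F_\bullet\in Y_{n,\la,s}$ forces $xF_n\subseteq F_n$ (and $x$ is nilpotent).

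For the bound on the largest part, I would invoke the second defining condition of $Y_{n,\la,s}$, namely $\mathrm{im}(x^{n-k})\subseteq F_n$. This gives $y^{n-k}\pi(v) = \pi(x^{n-k}v) = 0$ for every $v\in\bC^K$, hence $y^{n-k} = 0$, so every Jordan block of $y$ has size at most $n-k$; equivalently, every part of $\JT(y)$ is at most $n-k$.

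For the bound on the number of parts, recall that the number of parts of the Jordan type of a nilpotent operator equals the dimension of its kernel. One computes $\ker y = \pi(x^{-1}(F_n))$; since $\ker\pi = F_n\subseteq x^{-1}(F_n)$ (the containment again coming from $xF_n\subseteq F_n$), this gives $\dim\ker y = \dim(x^{-1}(F_n)) - n$. Moreover $x$ maps $x^{-1}(F_n)$ onto $F_n\cap\mathrm{im}(x)$ with kernel $\ker x$, so $\dim(x^{-1}(F_n)) = \dim\ker x + \dim(F_n\cap\mathrm{im}(x))$. Therefore
\[
\dim\ker y = \dim\ker x + \dim\bigl(F_n\cap\mathrm{im}(x)\bigr) - n.
\]
Now $\dim\ker x$ is the number of parts of $\JT(x) = \Lambda_{n,\la,s}$, which equals $s$ when $n > k$ (the case $n = k$ being immediate, since then $V = 0$), while $\dim(F_n\cap\mathrm{im}(x))\le\dim F_n = n$; hence $\dim\ker y\le s$.

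Combining the two bounds gives $\JT(x|_{\bC^K/F_n})\subseteq((n-k)^s)$. I do not anticipate a genuine obstacle here: the argument is elementary linear algebra, and the only points requiring care are the dimension bookkeeping in the displayed identity and the degenerate case $n = k$.
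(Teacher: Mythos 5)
Your proof is correct, and it takes a genuinely different route from the paper's. The paper fixes an explicit basis ordering so that $\mathrm{im}(x^{n-k})$ is visibly spanned by the cells of $\Lambda$ in columns $> n-k$, computes $\JT(x|_{\bC^K/\mathrm{im}(x^{n-k})}) = ((n-k)^s)$ directly from that picture, and then invokes (without elaboration) the monotonicity fact that passing to a further quotient by a larger invariant subspace can only shrink the Jordan type. You instead argue coordinate-free by noting that containment in a rectangle $((n-k)^s)$ is equivalent to two separate inequalities — nilpotency index $\le n-k$ and nullity $\le s$ — and verify each directly: the first from $\mathrm{im}(x^{n-k}) \subseteq F_n$, the second via the dimension count $\dim\ker y = \dim\ker x + \dim(F_n \cap \mathrm{im}\,x) - n \le s + n - n = s$, using $\dim\ker x = \ell(\Lambda) = s$ (when $n > k$) and $\dim F_n = n$. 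Your version is more self-contained in that it never needs the black-box monotonicity-under-quotient fact, and it correctly isolates the only degenerate case ($n = k$, where $V = 0$). What you give up is the explicit form of $x$, which the paper reuses in the proof of Lemma~\ref{lem:PisY} (the transpose-operator argument there relies on that same coordinate choice), so in the paper's organization the explicit setup pays for itself twice.
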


\begin{proof}
    The statement of the lemma is independent of conjugating $x$ by an invertible matrix. We choose $x$ to be of the following form: Label the Young diagram of $\Lambda_{n,\la,s}$ with the standard basis vectors $e_1,\dots,e_K$ in order from right to left along each row, bottom to top. 
    
    For example, when $n=5$, $\la=(2,1)$, and $s=3$, then we have the labeling
$$\begin{ytableau}
    e_9 & e_8 \\
    e_7 & e_6 & e_5 \\
    e_4 & e_3 & e_2 & e_1
\end{ytableau}.$$

    Define $x$ to be the $K\times K$ matrix such that $xe_i = e_j$ if $e_i$ is in the cell immediately to the left of $e_j$, and $xe_i = 0$ if $e_i$ is in the right-most cell in its row. Then $\mathrm{im}(x^{n-k})$ is the span of the $k$ vectors in the cells of $\Lambda$ that are in columns $>n-k$ from the left in the Young diagram (in this case, $e_1,e_2,e_5$). Since $F_\bullet\in Y_{n,\la,s}$, then $F_n\supseteq \mathrm{im}(x^{n-k})$. Thus, the Jordan type of $x|_{\bC^K/F_n}$ is contained in $\JT(x|_{\bC^K/\mathrm{im}(x^{n-k})})= ((n-k)^s)$.
\end{proof}

\begin{lemma}\label{lem:PisY}
    Let $\alpha = (1^n,K-n)$, $\JT(x) = \Lambda_{n,\la,s}$, and $\JT(t_i) = (1)$ for $i\leq n$ and $\JT(t_{n+1}) = ((n-k)^{s-1})$. Then $\sP_x^y \cong Y_{n,\lambda,s}$.
\end{lemma}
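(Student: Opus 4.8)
The plan is to substitute the given parameters into the flag description~\eqref{eq:P-as-flags} of the generalized Springer fiber $\sP_x^y$ and check that the resulting conditions cut out exactly $Y_{n,\la,s}$. First I would record that for $\alpha = (1^n, K-n)$ the partial flag variety $\sP = G/P$ is the space of flags $(F_1\subset\cdots\subset F_n\subset F_{n+1})$ with $\dim F_i = i$ for $i\le n$ and $\dim F_{n+1} = n + (K-n) = K$, so that $F_{n+1} = \bC^K$; this is literally the variety $\sP$ appearing in the definition of $Y_{n,\la,s}$. Since $\sP_x^y$ and $Y_{n,\la,s}$ each depend only on the conjugacy class of the chosen nilpotent (conjugating $x$ by $g\in G$ carries either variety to the corresponding one via $F_\bullet\mapsto gF_\bullet$), and since the construction of $\sP_x^y$ is independent of the choice of $u\in\fn$, it suffices to fix one $x$ with $\JT(x) = \Lambda_{n,\la,s}$ and show that the two subvarieties of $\sP$ coincide.

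Next I would run through the conditions $\JT(x|_{F_i/F_{i-1}}) \preceq \JT(t_i)$ of~\eqref{eq:P-as-flags} block by block. For $1\le i\le n$ the space $F_i/F_{i-1}$ is one-dimensional, so $x|_{F_i/F_{i-1}}$ is a nilpotent operator on a line, hence zero, with Jordan type $(1) = \JT(t_i)$; these conditions therefore hold automatically and contribute nothing beyond the stability $xF_i\subseteq F_i$. The stability condition $xF_{n+1}\subseteq F_{n+1}$ is likewise vacuous because $F_{n+1} = \bC^K$. So the only nontrivial constraint is the $(n+1)$-st one, namely $\JT(x|_{\bC^K/F_n}) \preceq ((n-k)^{s-1})$.

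The crux is to recognize this last constraint as the condition $F_n\supseteq \mathrm{im}(x^{n-k})$. Since $\dim(\bC^K/F_n) = K - n = (n-k)(s-1)$, the partition $\JT(x|_{\bC^K/F_n})$ has size $(n-k)(s-1)$, so Lemma~\ref{lem:JT-rect} (applied with $a = n-k$ and $b = s-1$) shows that $\JT(x|_{\bC^K/F_n}) \preceq ((n-k)^{s-1})$ if and only if the largest part of $\JT(x|_{\bC^K/F_n})$ is at most $n-k$. Because the largest part of the Jordan type of a nilpotent operator equals its index of nilpotency, this is equivalent to $(x|_{\bC^K/F_n})^{n-k} = 0$, i.e. $x^{n-k}(\bC^K)\subseteq F_n$, i.e. $\mathrm{im}(x^{n-k})\subseteq F_n$. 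Combining this with the previous paragraph, \eqref{eq:P-as-flags} identifies $\sP_x^y$ with $\{F_\bullet\in\sP \mid xF_i\subseteq F_i \text{ for all } i \text{ and } F_n\supseteq\mathrm{im}(x^{n-k})\}$, which is precisely $Y_{n,\la,s}$.

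I expect the only step needing genuine care to be this translation — pairing Lemma~\ref{lem:JT-rect} with the elementary fact that the largest Jordan block size equals the index of nilpotency — together with the bookkeeping identity $K - n = (n-k)(s-1)$ that makes the flag dimensions and partition sizes line up. (In the degenerate case $k = n$ the last block is empty and $P = B$, and the statement reduces to the known identification $Y_{n,\la,s}\cong \sB_\la$.) Everything else is a direct substitution into~\eqref{eq:P-as-flags}.
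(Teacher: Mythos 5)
Your proof is correct and uses the same frame as the paper (substitute into \eqref{eq:P-as-flags}, observe the conditions for $i\le n$ are vacuous, reduce the $(n+1)$-st condition to a bound on $\JT(x|_{\bC^K/F_n})_1$ via Lemma~\ref{lem:JT-rect}), but you handle the decisive equivalence differently and more cleanly. The paper proves $\JT(x|_{\bC^K/F_n})_1\le n-k \Leftrightarrow F_n\supseteq\mathrm{im}(x^{n-k})$ by splitting into two directions: the reverse direction is outsourced to Lemma~\ref{lem:JT-containment} (which requires fixing an explicit Jordan-model $x$), and the forward direction is proved by contraposition using a transpose operator $x^t$ and a linear-independence argument tied to that explicit model. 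Your argument replaces both directions with the single coordinate-free observation that the largest Jordan block size equals the nilpotency index, so $\JT(x|_{\bC^K/F_n})_1\le n-k \Leftrightarrow (x|_{\bC^K/F_n})^{n-k}=0 \Leftrightarrow \mathrm{im}(x^{n-k})\subseteq F_n$. This is simpler, works for any $x$ of the given Jordan type without choosing a basis, and makes Lemma~\ref{lem:JT-containment} unnecessary for this lemma (though the paper still needs that lemma in the proof of Lemma~\ref{lem:rat-smooth}). Your bookkeeping checks — $\dim(\bC^K/F_n)=(n-k)(s-1)$ so Lemma~\ref{lem:JT-rect} applies with $a=n-k$, $b=s-1$, and the $i\le n$ and $i=n+1$ stability conditions are vacuous or automatic — are all correct.
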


\begin{proof}
    Given $F_\bullet\in \sP$, then by~\eqref{eq:P-as-flags}, $F_\bullet\in \sP_x^y$ if and only if $\JT(x|_{F_i/F_{i-1}}) \preceq \JT(t_i)$ for all $i$. For $\alpha=(1^n,K-n)$, this is equivalent to $\JT(x|_{\bC^K/F_n}) \preceq ((n-k)^{s-1})$, which by Lemma~\ref{lem:JT-rect} is equivalent to $\JT(x|_{\bC^K/F_n})_1 \leq n-k$. 
    
    We claim that $\JT(x|_{\bC^K/F_n})_1\leq n-k$ if and only if $F_\bullet\in Y_{n,\la,s}$. Indeed, the reverse direction follows by Lemma~\ref{lem:JT-containment}. We prove the forward direction by proving the contrapositive: Suppose $F_\bullet\notin Y_{n,\la,s}$, meaning $\mathrm{im}(x^{n-k})\not\subseteq F_n$. Then there exists some nonzero $v\in \mathrm{im}(x^{n-k})\setminus F_n$. The transpose operator $x^t$ is the linear operator defined by $x^te_i = e_j$ if and only if $e_i$ is in the cell immediately to the right of $e_j$, and $x^t e_i = 0$ if $e_i$ is in the first column. Then $v,x^tv,(x^t)^2v,\dots, (x^t)^{n-k}v\notin F_n$ since $xF_n \subseteq F_n$. Furthermore, it can be checked that they are linearly independent vectors. Choosing a basis of $\bC^K/F_n$ that includes these $n-k+1$ vectors shows that $\JT(x|_{\bC^K/F_n})_1 \geq n-k+1$. Thus, the claim is proved, and it follows that $F_\bullet\in \sP_x^y$ if and only if $F_\bullet \in Y_{n,\la,s}$.
\end{proof}

\begin{lemma}\label{lem:rat-smooth}
    Let $\alpha = (1^n,K-n)$, $\JT(x) = \Lambda_{n,\la,s}$, and $\JT(t_i) = (1)$ for $i\leq n$ and $\JT(t_{n+1}) = ((n-k)^{s-1})$. In this case, the hypotheses of Theorem~\ref{thm:BM-iso} hold: $\overline{\cO}_y$ is rationally smooth at all points of $\sP_x^y$.
\end{lemma}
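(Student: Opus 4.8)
The plan is to peel two fiber‑bundle layers off of $\overline{\cO}_y$, reducing rational smoothness along $\sP_x^y$ to rational smoothness of a single \emph{rectangular} nilpotent orbit closure along a controlled family of orbits, and then to settle that last point using \eqref{eq:IH-Of-Orbit} together with a uniqueness statement for semistandard tableaux.

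First I would use the description $\overline{\cO}_y = G\times_P(\overline{\cO}_t+\fn)$ from \eqref{eq:Oy-closure}. Since $G\to G/P$ is a locally trivial $P$-bundle, $\overline{\cO}_y$ is, locally over the smooth base $G/P$, a product $U\times(\overline{\cO}_t+\fn)$. Because $\alpha = (1^n,K-n)$ has its first $n$ parts equal to $1$, the Levi nilpotent cone is $\cN_L = \{0\}^n\times\cN_{GL_{K-n}}$ and $\overline{\cO}_t = \{0\}^n\times\overline{\cO}_{((n-k)^{s-1})}$, so $\overline{\cO}_t+\fn$ is precisely the set of block upper‑triangular matrices with vanishing first $n$ diagonal blocks and last diagonal block in $\overline{\cO}_{((n-k)^{s-1})}$; projecting onto the last diagonal block and the strictly‑block‑upper‑triangular entries gives an isomorphism of varieties $\overline{\cO}_t+\fn \cong \overline{\cO}_{((n-k)^{s-1})}\times\mathbb{A}^{\dim\fn}$. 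Combining these two observations, the Relative K\"{u}nneth Formula (the ingredient behind Lemma~\ref{lem:rat-smooth-fiber}) shows that $\overline{\cO}_y$ is rationally smooth at a point $(g,M)$ if and only if $\overline{\cO}_{((n-k)^{s-1})}$ is rationally smooth at the last diagonal block $N$ of $M$.

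Next I would pin down which orbits $N$ can lie in when $(g,M)\in\sP_x^y$. In that case $M = g^{-1}xg$, so $\JT(M) = \Lambda_{n,\la,s}$; in particular $N\in\overline{\cO}_{((n-k)^{s-1})}$, so $\nu := \JT(N)\preceq((n-k)^{s-1})$ by \eqref{eq:orbit-closure}. On the other hand, $M$ preserves the standard partial flag and $g$ conjugates this picture to $x$ acting on the flag $F_\bullet$ attached to $gP$, which lies in $Y_{n,\la,s}$ by Lemma~\ref{lem:PisY}; since $N$ is conjugate to $x|_{\bC^K/F_n}$, Lemma~\ref{lem:JT-containment} gives $\nu\subseteq((n-k)^s)$, i.e.\ $\ell(\nu)\le s$. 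So it suffices to show that $\overline{\cO}_{((n-k)^{s-1})}$ is rationally smooth at all points of $\cO_\nu$ whenever $\nu\preceq((n-k)^{s-1})$ and $\ell(\nu)\le s$.

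By \eqref{eq:IH-Of-Orbit} (applied with orbit closure $\overline{\cO}_{((n-k)^{s-1})}$ and base orbit $\cO_\nu$) this is equivalent to $\widetilde{K}_{((n-k)^{s-1}),\nu}(q) = q^{\mathbf{n}(((n-k)^{s-1}))}$, which I would obtain from the following observation: in a semistandard tableau of rectangular shape $((n-k)^{s-1})$ whose content has at most $s$ parts, the entry in the $r$-th row from the bottom is forced to lie in $\{r,r+1\}$, and writing $m_r$ for the number of $(r+1)$'s in row $r$, the content forces $m_r = r(n-k) - (\nu_1+\cdots+\nu_r)$; hence there is a \emph{unique} such tableau $T$, so $\widetilde{K}_{((n-k)^{s-1}),\nu}(q) = q^{\cc(T)}$ is a monomial, and since the left‑hand side of \eqref{eq:IH-Of-Orbit} is an honest graded dimension count with $\dim IH^0_x = 1$, that monomial must be $q^{\mathbf{n}(((n-k)^{s-1}))}$. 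Then the local intersection cohomology of $\overline{\cO}_{((n-k)^{s-1})}$ at $x\in\cO_\nu$ is trivial, i.e.\ $\overline{\cO}_{((n-k)^{s-1})}$ is rationally smooth there, completing the argument. The step I expect to be most delicate is the middle one: carrying a point of $\sP_x^y$ through both fiber‑bundle identifications and verifying that the resulting Jordan type $\nu$ is trapped inside the $s\times(n-k)$ box, which is where Lemmas~\ref{lem:JT-rect}, \ref{lem:JT-containment} and \ref{lem:PisY} must be combined correctly — and it is exactly the resulting bound $\ell(\nu)\le s$ that makes the Kostka--Foulkes polynomial degenerate to a single monomial; the K\"{u}nneth reductions and the tableau uniqueness are comparatively routine.
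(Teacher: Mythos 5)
Your argument is correct and follows the same overall strategy as the paper's proof: reduce rational smoothness of $\overline{\cO}_y$ along $\sP_x^y$ to rational smoothness of the single rectangular orbit closure $\overline{\cO}_{((n-k)^{s-1})}$ at orbits $\cO_\nu$ with $\nu\subseteq ((n-k)^s)$ (using Lemmas~\ref{lem:PisY} and~\ref{lem:JT-containment} to trap $\nu$), and then invoke \eqref{eq:IH-Of-Orbit} together with the degeneracy of $\widetilde{K}_{((n-k)^{s-1}),\nu}(q)$ to a single monomial.

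Two small points where you diverge, both valid. First, for the fiber-bundle reduction you observe directly that $\overline{\cO}_t+\fn\cong\overline{\cO}_{((n-k)^{s-1})}\times\mathbb{A}^{\dim\fn}$ (because the first $n$ diagonal blocks of $\cN_L$ are $1\times 1$ and hence zero), then apply the Relative K\"unneth pointwise; the paper instead first cuts out the closed bad locus $Z$ (orbits with $\ell(\JT(t'_{n+1}))>s$) to form the open set $G\times_P(\overline{\cO}_t\setminus Z+\fn)$ and then applies Lemma~\ref{lem:rat-smooth-fiber}. These are equivalent — rational smoothness is a local property — and your pointwise phrasing is arguably tidier, though you should note that "the last diagonal block $N$ of $M$" is only well-defined up to $GL_{K-n}$-conjugacy across representatives $(g,M)\sim(gp^{-1},\Ad(p)M)$, so it is the orbit $\cO_\nu$ rather than $N$ itself that is intrinsic; this suffices since rational smoothness at a point of a $G$-orbit depends only on the orbit. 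Second, rather than directly computing the cocharge of the unique SSYT of shape $((n-k)^{s-1})$ and content $\nu$ (which is what the paper does in Lemma~\ref{lem:K-q-power}, via the left/right-entry decomposition), you deduce from uniqueness that $\widetilde{K}_{((n-k)^{s-1}),\nu}(q)$ is a monomial and then use the normalization $\dim IH^0_u=\bQ$ — equivalently, that $q^{-\mathbf{n}((n-k)^{s-1})}\widetilde{K}_{((n-k)^{s-1}),\nu}(q)$ has constant term $1$ — to pin down the exponent. That is a legitimate shortcut (it is a standard property of the IC sheaf of an irreducible variety), and it nicely isolates what uniqueness of the tableau alone buys you; the paper's explicit cocharge computation gives the exponent directly and is self-contained but slightly longer.
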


\begin{proof}
    Given $(g,x')\in \sP_x^y$, then $x' = \Ad(g^{-1})x \in \overline{\cO}_t + \fn$. Let $F_\bullet$ be the partial flag corresponding to $gP$, meaning $F_i = \mathrm{span}\{ge_1,\dots, ge_i\}$ for $i\leq n$.   By Lemma \ref{lem:JT-containment}, we have $\JT(x|_{\bC^K/F_n})\subseteq ((n-k)^s)$.
    Since $x' = \Ad(g^{-1}) x$, we have a commutative diagram
    \[
    \begin{tikzcd}
        \bC^K/F_n \arrow[r,"x"] & \bC^K/F_n\\
        \bC^K/\mathrm{span}\{e_1,\dots, e_n\}\arrow[r,"x'"]\arrow[u,"g"] & \bC^K/\mathrm{span}\{e_1,\dots, e_n\}\arrow[u,"g"]
    \end{tikzcd}
    \]
    which implies that $\JT(x|_{\bC^K/F_n}) = \JT(x'|_{\bC^K/\mathrm{span}\{e_1,\dots,e_n\}})$.     
    Thus, the Jordan type of the last diagonal block of $x'$ has length at most $s$. Thus, 
    $x' \in \overline{\cO}_t\setminus Z + \fn$ where 
    \[
    Z \coloneqq \bigcup_{\substack{t' \in \overline{\cO}_t,\\ \ell(\JT(t'_{n+1})) > s}}\cO_{t'}.
    \]
    Note that $Z$ is a closed subvariety of $\overline{\cO}_t$. 

    We thus have $\sP_x^y \subseteq G\times_P(\overline{\cO}_t\setminus Z + \fn)$. We claim that, since $G\times_P(\overline{\cO}_t\setminus Z + \fn)$ is an open subset of $\overline{\cO}_y$, it suffices to show that $\overline{\cO}_t\setminus Z$ is rationally smooth. 
    Indeed, the space $G\times_P(\overline{\cO}_t\setminus Z + \fn)$ is homeomorphic to the fiber product $(G\times_P \overline{\cO}_t\setminus Z)\times_{\sP} (G\times_P\fn)$ (of fiber bundles over $\sP$). Since $G\times_P\fn$ is smooth, then it suffices to check that $\overline{\cO}_t\setminus Z$ is rationally smooth by Lemma~\ref{lem:rat-smooth-fiber}.

    Equivalently, we must show that $\overline{\cO}_{((n-k)^{s-1})}\setminus Z'$ is rationally smooth, where  
    \[
    Z' \coloneqq \bigcup_{\substack{\nu\vdash (n-k)(s-1),\\ \ell(\nu) > s}} \cO_\nu.
    \]
    Since local intersection cohomology only depends on a neighborhood of $u$ and $Z'$ is a closed subvariety, it suffices to show that for all $u\in \overline{\cO}_{((n-k)^{s-1})}\setminus Z'$,
    \[
    IH_u^i(\overline{\cO}_{((n-k)^{s-1})};\bQ) = 
    \begin{cases}
        \bQ & \text{ if }i=0\\
        0 & \text{ if }i\neq 0.
    \end{cases}
    \]

    Now, by Lemma \ref{lem:JT-rect}, $u\in \overline{\cO}_{((n-k)^{s-1})}\setminus Z'$ if and only if $u\in \cO_\mu$ for some $\mu$ such that $\mu_1\leq n-k$ and $\ell(\mu) \leq s$, which is equivalent to $\mu\subseteq ((n-k)^s)$. By \eqref{eq:IH-Of-Orbit}, for $u\in \cO_\mu$ we have
    \begin{equation}\label{eq:IH-comp}
    \sum_k q^k \dim(IH_u^{2k}(\overline{\cO}_{((n-k)^{s-1})};\bQ)) = q^{-\mathbf{n}((n-k)^{s-1})} \widetilde{K}_{((n-k)^{s-1}),\mu}(q).
    \end{equation}
    But for $\mu\vdash (n-k)(s-1)$ such that $\mu\subseteq ((n-k)^s)$, $\widetilde{K}_{((n-k)^{s-1}),\mu}(q) = q^{\mathbf{n}((n-k)^{s-1})}$ by Lemma~\ref{lem:K-q-power} below. Thus, the right-hand side of \eqref{eq:IH-comp} is $1$. Thus, $\overline{\cO}_{((n-k)^{s-1})}\setminus Z$ is rationally smooth, and $\overline{\cO}_y$ is rationally smooth at all points of $\sP_x^y$.
\end{proof}

\begin{lemma}\label{lem:K-q-power}
    Suppose $\mu\vdash ab$ for two positive integers $a$ and $b$ such that $\mu\subseteq (a)^{b+1}$. Then $\widetilde{K}_{(a^b),\mu}(q) = q^{\mathbf{n}((a)^{b})}$.
\end{lemma}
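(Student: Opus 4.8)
The plan is to deduce the lemma from two facts: (i) under the stated hypotheses there is a \emph{unique} semistandard Young tableau of shape $(a^b)$ and content $\mu$, so that $\widetilde{K}_{(a^b),\mu}(q)$ is a single monomial $q^{\cc(T)}$ for that tableau $T$; and (ii) this monomial must be $q^{\mathbf{n}((a^b))}$.

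For (i), I would parametrize the semistandard tableaux of rectangular shape $(a^b)$ with entries in $\{1,\dots,b+1\}$ (the only entries possible, since the content is $\mu$ with $\ell(\mu)\le b+1$). Each of the $b$-box columns is strictly increasing, hence omits exactly one value; writing $c_j$ for the value omitted by the $j$-th column from the left, a short check shows the rows are weakly increasing if and only if $c_1\ge c_2\ge\cdots\ge c_a$. The entry $v$ then occurs in exactly $a-\#\{j:c_j=v\}$ columns, so the content is $\mu$ if and only if the multiset $\{c_1,\dots,c_a\}$ has $a-\mu_v$ copies of each $v\in\{1,\dots,b+1\}$. Since $\mu\subseteq(a)^{b+1}$ gives $\mu_v\le a$, and since $\sum_{v=1}^{b+1}(a-\mu_v)=(b+1)a-ab=a$, such a multiset exists and is unique. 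This proves $K_{(a^b),\mu}=1$ and gives an explicit description of $T$ (incidentally forcing $\ell(\mu)\in\{b,b+1\}$).

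For (ii), I would invoke the classical fact that for partitions $\mu\preceq\nu$ the Kostka--Foulkes polynomial $K_{\nu,\mu}(q)=\sum_S q^{\ch(S)}$ is monic of degree $\mathbf{n}(\mu)-\mathbf{n}(\nu)$; equivalently $\widetilde{K}_{\nu,\mu}(q)=\sum_S q^{\cc(S)}=q^{\mathbf{n}(\mu)}K_{\nu,\mu}(1/q)$ has lowest-degree term $q^{\mathbf{n}(\nu)}$. The hypotheses $\mu\subseteq(a)^{b+1}$, $|\mu|=ab$ give $\mu_1+\cdots+\mu_i\le ai$ for all $i$, i.e. $\mu\preceq(a^b)$, so this applies with $\nu=(a^b)$; combined with (i), the lone monomial $\widetilde{K}_{(a^b),\mu}(q)$ is therefore $q^{\mathbf{n}((a^b))}$.

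I expect the delicate point to be the appeal in (ii) to the minimal-degree statement for $q$-Kostka polynomials. If a self-contained proof is preferred, one can instead compute $\cc(T)$ directly by induction on $b$: deleting the top row of $T$ --- which consists exactly of the boxes carrying the largest entry, namely $a-\mu_{b+1}$ copies of $b$ followed by $\mu_{b+1}$ copies of $b+1$ --- leaves the unique tableau of shape $(a^{b-1})$ and content $\mu'=(\mu_1,\dots,\mu_{b-1},\,\mu_b+\mu_{b+1}-a)$, which again satisfies the hypotheses of the lemma with $b$ replaced by $b-1$ (note $\mu_b+\mu_{b+1}\ge a$, since $|\mu|=ab$ and each $\mu_i\le a$). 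By induction its cocharge is $\mathbf{n}((a^{b-1}))=a\binom{b-1}{2}$, with base case $b=1$ a single row of cocharge $0$. One then checks, by tracking the cocharge subwords, that prepending this top row raises the cocharge by exactly $a(b-1)=\mathbf{n}((a^b))-\mathbf{n}((a^{b-1}))$; verifying this identity is the main computational obstacle along the combinatorial route.
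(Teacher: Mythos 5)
Your step (i) --- parametrizing the columns of the rectangle by their omitted entry, observing that semistandardness forces those omitted values to weakly decrease across columns, and matching multiplicities against the content --- is precisely the paper's uniqueness argument. You diverge in step (ii): the paper does not invoke the degree bound for Kostka--Foulkes polynomials, but instead computes $\cc(T)$ directly by noting that each entry $i$ of $T$ lies either in row $i$ (a ``left entry'') or in row $i-1$ (a ``right entry''), with the two regions separated by a lattice path; each cocharge subword then consists of left entries $1,\dots,i-1$ in their own rows followed by right entries $i,\dots,b+1$ in rows $i-1,\dots,b$, wrapping around the reading word exactly once, so every entry's cocharge label equals its row index minus one and $\cc(T)=a\binom{b}{2}=\mathbf{n}((a^b))$ follows immediately. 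Your primary route is correct --- you rightly observe that $\mu\subseteq(a)^{b+1}$ with $|\mu|=ab$ gives $\mu\preceq(a^b)$, and the classical fact that $\widetilde{K}_{\nu,\mu}(q)$ has lowest-degree term $q^{\mathbf{n}(\nu)}$ for $\mu\preceq\nu$ then pins the unique monomial --- but, as you anticipate, it outsources the content to an external result roughly as deep as the elementary calculation it replaces (and one of whose standard derivations runs through the same Lusztig intersection-cohomology formula that this lemma is being used to feed into). Your inductive alternative is closer in spirit to the paper's argument and should succeed; the paper's labeling observation simply handles all $b$ rows in one stroke rather than peeling off the top row and re-verifying the subword structure at each stage.
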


\begin{proof}
       There is a unique semistandard Young tableau $T$ with content $\mu$ and shape $(a)^{b}$. Indeed, if $T$ is such a semistandard Young tableau, since $\ell(\mu) \leq b+1$ and the shape of $T$ has $b+1$ rows, there is exactly one letter from $1,\dots, b+1$ missing from each column of $T$. Since $T$ has content $\mu$, then it has $a-\mu_i$ many columns that do not have the letter $i$, and there is only one way of arranging these columns into a semistandard Young tableau $T$ (the missing letters from each column must weakly decrease from left to right).  

       We now compute the cocharge of $T$.  We claim that the cocharge subscript of each letter is equal to one less than its row index.  Each letter $i$ is either in row $i$ or in row $i-1$ by construction; let the entries that are in their own row be called \textit{left entries} of $T$ and let the others be \textit{right entries}; notice that the left entries are separated from the right by a down-and-right path.  It follows that each cocharge subword consists of left entries $1,2,\ldots,i-1$ in their respective rows for some $i$, followed by a sequence of right entries $i,i+1,\ldots,b+1$ in rows $i-1,\ldots,b$ respectively.   Because the cocharge subword only wraps around at the jump from left to right entries, each subscript is equal to the row that the entry is in at every step. 
       Finally, it follows that $\cc(T)=a\cdot \binom{b}{2}=\mathbf{n}((a^b))$, and the result follows.  
\end{proof}

\begin{example}
    For $a=5$, $b=3$, and $\mu=(4,4,4,3)$, the tableau $T$ in the proof of Lemma \ref{lem:K-q-power} is $$\young({{\bf 3}}{{\bf 3}}444,{{\bf 2}}{{\bf 2}}{{\bf 2}}33,{{\bf 1}}{{\bf 1}}{{\bf 1}}{{\bf 1}}2).$$  The left entries are shown in boldface, and the right entries are normal font.  The cocharge subscript of each letter is one less than its row, and the cocharge is $5\cdot \binom{3}{2}=15$. 
\end{example}

We now can prove Theorem \ref{thm:skew-formula}, which we restate here.

\begin{skewthm}
We have
\begin{equation}\label{eq:s-perp}
\widetilde{H}_{n,\lambda,s}(x;q) = \frac{1}{q^{\binom{s-1}{2}(n-k)}} s_{((n-k)^{s-1})}^\perp \widetilde{H}_\Lambda(x;q)
\end{equation}
\end{skewthm}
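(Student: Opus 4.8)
The plan is to deduce Theorem~\ref{thm:skew-formula} from Theorem~\ref{thm:BM-iso} (Borho--MacPherson) together with the identifications established in the preceding lemmas. First I would assemble the setup: take $\alpha = (1^n, K-n)$ with $K = |\Lambda_{n,\la,s}|$, take $x$ nilpotent with $\JT(x) = \Lambda_{n,\la,s}$, and take $t = t_1 \oplus \cdots \oplus t_{n+1}$ with $\JT(t_i) = (1)$ for $i \le n$ and $\JT(t_{n+1}) = ((n-k)^{s-1})$. By Lemma~\ref{lem:PisY} we have $\sP_x^y \cong Y_{n,\la,s}$, and by Lemma~\ref{lem:rat-smooth} the rational smoothness hypothesis of Theorem~\ref{thm:BM-iso} is satisfied, so we obtain a graded isomorphism of $W^P$-modules
\[
H^i(Y_{n,\la,s};\bQ) \otimes H^{2d_y}(\sB(L)_t;\bQ) \cong H^{i+2d_y}(\sB_\Lambda;\bQ)^{\rho(t,1)},
\]
where $\rho(t,1)$ is the irreducible $W_L = S_1^n \times S_{(n-k)(s-1)}$-module $V_{(1)}^{\otimes n} \otimes V_{((n-k)^{s-1})}$.

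Next I would compute the two numerical ingredients on the right and left margins of this isomorphism. The dimension $d_y = \dim_\bC \sB(L)_t = \dim_\bC \sB_{((n-k)^{s-1})}$ equals $\mathbf{n}((n-k)^{s-1}) = \binom{s-1}{2}(n-k)$ (using that the dimension of the Springer fiber $\sB_\mu$ is $\mathbf{n}(\mu)$, and $H^{2d_y}(\sB(L)_t;\bQ)$ is one-dimensional). This accounts exactly for the grading shift and the normalization factor $q^{-\binom{s-1}{2}(n-k)}$. On the other side, the partial Weyl group here is $W^P \cong S_n$ (permuting the $n$ blocks of size $1$), and the $S_n$-action it carries on $H^*(Y_{n,\la,s};\bQ)$ is precisely the Springer-type $S_n$-action from \cite{GLW}, so $\grFrob$ of the left-hand side is $q^{d_y} \widetilde{H}_{n,\la,s}(x;q)$ (the factor $q^{d_y}$ coming from the tensor factor $H^{2d_y}$).

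Finally I would interpret the right-hand side via Lemma~\ref{lem:Skewing}. The module $H^{*}(\sB_\Lambda;\bQ)^{\rho(t,1)}$ is, in each degree, the $\rho(t,1)$-isotypic component of the restriction of $H^*(\sB_\Lambda;\bQ)$ from $S_K$ to $S_{n-m} \times S_m$ with $m = (n-k)(s-1)$ and $\mu = ((n-k)^{s-1})$ — the trivial $S_1^n$ part contributes nothing, so this is exactly the $V_{((n-k)^{s-1})}$-isotypic component. By Lemma~\ref{lem:Skewing}, taking $\grFrob$ as an $S_n$-module and dividing by $\dim V_{((n-k)^{s-1})}$ gives $s_{((n-k)^{s-1})}^\perp \grFrob(H^*(\sB_\Lambda;\bQ)) = s_{((n-k)^{s-1})}^\perp \widetilde{H}_\Lambda(x;q)$, where I use the Hotta--Springer formula $\grFrob(H^*(\sB_\Lambda;\bQ)) = \widetilde{H}_\Lambda(x;q)$. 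Matching the graded Frobenius characters of both sides of the $W^P$-isomorphism and cancelling the $q^{d_y}$ from $H^{2d_y}(\sB(L)_t)$ against $d_y$ copies of the grading shift yields
\[
\widetilde{H}_{n,\la,s}(x;q) = \frac{1}{q^{\binom{s-1}{2}(n-k)}} s_{((n-k)^{s-1})}^\perp \widetilde{H}_\Lambda(x;q),
\]
as desired. I expect the main obstacle to be bookkeeping rather than conceptual: one must check carefully that the $W^P = S_n$ action appearing in Theorem~\ref{thm:BM-iso} coincides with the $S_n$-action on $H^*(Y_{n,\la,s};\bQ)$ used to define $\widetilde{H}_{n,\la,s}$ (so that $\grFrob$ is literally the same symmetric function), and that the grading shift by $2d_y$ is correctly reconciled with the fact that $\widetilde{H}_\Lambda$ and $\widetilde{H}_{n,\la,s}$ are each normalized to start in degree $0$ — the isotypic component sits in a shifted range of degrees inside $H^*(\sB_\Lambda)$, and one needs that $s_{((n-k)^{s-1})}^\perp$ applied to a degree-$d$ symmetric function lands in degree $d - d_y$ compatibly. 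Once these compatibilities are pinned down, the identity follows by comparing coefficients of $q^i s_\nu$ on both sides.
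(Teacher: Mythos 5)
Your approach is the same as the paper's: assemble the parabolic data $\alpha=(1^n,K-n)$, $\JT(x)=\Lambda$, $t_{n+1}$ of Jordan type $((n-k)^{s-1})$, invoke Lemmas~\ref{lem:PisY} and \ref{lem:rat-smooth} to apply Theorem~\ref{thm:BM-iso}, and then translate the resulting graded $W^P\cong S_n$-module isomorphism into a Frobenius-character identity via Lemma~\ref{lem:Skewing}. However, there is one factual error that, taken at face value, breaks the bookkeeping: you assert that $H^{2d_y}(\sB(L)_t;\bQ)$ is one-dimensional. It is not. Since $\sB(L)_t\cong\sB_{t_{n+1}}$ and (by Springer theory) $H^{2d_y}(\sB_{t_{n+1}};\bQ)\cong V_{((n-k)^{s-1})}$, its dimension equals the number of standard Young tableaux of rectangular shape $((n-k)^{s-1})$, which is $1$ only when $n-k\le 1$ or $s\le 2$.

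This dimension is not ignorable: it is precisely the factor that must cancel against the $\dim(V_\mu)$ coming from Lemma~\ref{lem:Skewing}. Taking graded Frobenius of the Borho--MacPherson isomorphism, the left-hand side gives $q^{d_y}\,\dim\bigl(V_{((n-k)^{s-1})}\bigr)\,\widetilde{H}_{n,\la,s}(x;q)$ (the dimension factor appearing because $W^P$ acts trivially on $H^{2d_y}(\sB(L)_t;\bQ)$), while the right-hand side gives $\dim\bigl(V_{((n-k)^{s-1})}\bigr)\, s_{((n-k)^{s-1})}^\perp \widetilde{H}_\Lambda(x;q)$ by Lemma~\ref{lem:Skewing}. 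Cancelling $\dim\bigl(V_{((n-k)^{s-1})}\bigr)$ from both sides and dividing by $q^{d_y}$ yields the stated formula. With your one-dimensionality claim, the left-hand side would be missing this factor, and your chain of equalities as written would actually produce an extraneous $\dim\bigl(V_{((n-k)^{s-1})}\bigr)$ in the final answer. This is exactly the sort of bookkeeping issue you flagged at the end, and the rest of your reasoning (including the remark about matching the $W^P$-action with the $S_n$-action of \cite{GLW}) is on target; you just need to replace ``one-dimensional'' with the correct dimension and observe the cancellation.
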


\begin{proof}
Observe that for $P$ the parabolic of type $(1^n,K-n)$, then $W^P\cong S_n$. Combining Theorem~\ref{thm:BM-iso}, Lemma~\ref{lem:PisY}, and Lemma~\ref{lem:rat-smooth}, we have an isomorphism of graded $S_n$-modules (where $S_n$ acts trivially on the second tensor factor)
\[
H^i(Y_{n,\lambda,s};\bQ) \otimes H^{2d_y}(\sB(L)_t;\bQ) \cong H^{i+2d_y}(\sB_x)^{\rho(y,1)}.
\]
Recall that $d_y = \dim_{\mathbb{C}}(\eta^{-1}(y)) = \dim_{\mathbb{C}}(\sB(L)_t)$. Note that in this case, $\sB(L)_t \cong \sB_{t_{n+1}}$.
Since $\JT(t_{n+1}) = ((n-k)^{s-1})$, then $\dim(H^{2d_y}(\sB(L)_t;\bQ)) = \dim(V_{((n-k)^{s-1})})$. 

We have $\rho(y,1) \cong V_{(1)}\otimes \cdots \otimes V_{(1)}\otimes V_{(s-1)^{n-k}}$ as $W_L = S_1\times \cdots \times S_1\times S_{(s-1)(n-k)}$-modules. Thus, since $d_y=\mathbf{n}((n-k)^{s-1})=\binom{s-1}{2}(n-k)$, we have
\begin{equation}\label{eq:final-eqn}
\dim(V_{((n-k)^{s-1})}) \grFrob(H^*(Y_{n,\la,s};\bQ)) = q^{\binom{s-1}{2}(n-k)}\grFrob(V_{\Lambda}^{V_{((n-k)^{s-1})}}).
\end{equation}
Theorem~\ref{thm:skew-formula} then follows by rearranging and applying Lemma~\ref{lem:Skewing}.
\end{proof}

\begin{remark}
    In the proof of Theorem~\ref{thm:skew-formula} above, we have implicitly used the fact that the $S_n$ action on $H^*(Y_{n,\la,s};\bQ)$ here is the same as the one in~\cite{GLW}. The action defined in \cite{GLW} was by permutations of the first Chern classes of the tautological quotient line bundles $F_i/F_{i-1}$ for $i\leq n$. The fact that this matches the action of $W^P$ on $H^*(Y_{n,\la,s};\bQ)$ follows from the fact that it is compatible with the $W=S_K$ action defined by Borho and MacPherson on the Springer fiber $H^*(\sB_x;\bQ)$, which in type A is well known to be the same as the action of $S_K$ by permutations of the first Chern classes of the tautological line bundles, see~\cite{Brundan-Ostrik}.
\end{remark}

As an immediate corollary of Theorem~\ref{thm:skew-formula}, we see how the formula in \cite{GLW} for the top cohomology of $Y_{n,\la,s}$ follows immediately from the skewing formula.
\begin{corollary}[{\cite[Theorem 1.3]{GLW}}]
    We have an isomorphism of $S_n$ modules,
    \[
    H^{top}(Y_{n,\la,s};\bQ) \cong S^{\Lambda/((n-k)^{s-1})},
    \]
    where $S^{\Lambda/((n-k)^{s-1})}$ is the skew Specht module corresponding to the skew partition $\Lambda/((n-k)^{s-1})$.
\end{corollary}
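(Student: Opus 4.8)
The plan is to read off the top-degree part of the graded Frobenius character from Theorem~\ref{thm:skew-formula}. Since $H^*(Y_{n,\la,s};\bQ)$ is concentrated in even cohomological degrees (it is a graded quotient of a polynomial ring), the top nonvanishing cohomology group is recorded by the top $q$-degree term of $\grFrob(H^*(Y_{n,\la,s};\bQ)) = \widetilde{H}_{n,\la,s}(x;q)$. So it suffices to show that this term is $q^{\dim_\bC Y_{n,\la,s}}\, s_{\Lambda/((n-k)^{s-1})}$, since $\Frob(S^{\Lambda/((n-k)^{s-1})}) = s_{\Lambda/((n-k)^{s-1})}$ and a graded $S_n$-module whose Frobenius character in a single degree equals $s_{\theta/\rho}$ is isomorphic to the skew Specht module $S^{\theta/\rho}$.

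First I would locate the top $q$-degree term of $\widetilde{H}_\Lambda(x;q)$. By the cocharge form of Equation~\eqref{eq:lascoux-schutzenberger}, the cocharge of an element of $\SSYT(\Lambda)$ is maximized, uniquely, by the single tableau of shape $\Lambda$, which has cocharge $\mathbf{n}(\Lambda)$; hence the coefficient of $q^{\mathbf{n}(\Lambda)}$ in $\widetilde{H}_\Lambda$ is exactly $s_\Lambda$, with no terms of higher $q$-degree. Since $s_{((n-k)^{s-1})}^\perp$ is $\bQ[q]$-linear it preserves $q$-degree, so the top $q$-degree term of $s_{((n-k)^{s-1})}^\perp \widetilde{H}_\Lambda$ is $q^{\mathbf{n}(\Lambda)}\, s_{((n-k)^{s-1})}^\perp s_\Lambda = q^{\mathbf{n}(\Lambda)}\, s_{\Lambda/((n-k)^{s-1})}$, which is nonzero because $((n-k)^{s-1})\subseteq \Lambda$ (the partition $\Lambda$ has exactly $s$ parts, each at least $n-k$). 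In particular no cancellation occurs when passing to the leading term.

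Then I would divide by the normalization factor and match degrees. Dividing by $q^{\binom{s-1}{2}(n-k)}$ as in Theorem~\ref{thm:skew-formula} shifts the top $q$-degree to $\mathbf{n}(\Lambda) - \binom{s-1}{2}(n-k)$. A direct computation gives $\mathbf{n}(\Lambda) = \sum_{i=1}^{s}(i-1)(n-k+\la_i) = \binom{s}{2}(n-k) + \mathbf{n}(\la)$, and since $\binom{s}{2} - \binom{s-1}{2} = s-1$ this shifted degree equals $\mathbf{n}(\la) + (n-k)(s-1) = \dim_\bC Y_{n,\la,s}$. Hence the top $q$-degree term of $\widetilde{H}_{n,\la,s}$ is $q^{\mathbf{n}(\la)+(n-k)(s-1)}\, s_{\Lambda/((n-k)^{s-1})}$, and comparing Frobenius characters in top degree yields $H^{top}(Y_{n,\la,s};\bQ) \cong S^{\Lambda/((n-k)^{s-1})}$ as $S_n$-modules.

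There is essentially no serious obstacle: the only point requiring care is ruling out cancellation in the leading term after applying the skewing operator, which is immediate from the uniqueness of the maximal-cocharge tableau of content $\Lambda$ together with the $q$-degree-preserving nature of $s_\mu^\perp$; everything else is bookkeeping with the statistic $\mathbf{n}$.
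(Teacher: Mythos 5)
Your proposal is correct and follows essentially the same route as the paper: read off the top $q$-degree term of $\widetilde{H}_{n,\la,s}$ from Theorem~\ref{thm:skew-formula} by noting that the leading term of $\widetilde{H}_\Lambda$ is $q^{\mathbf{n}(\Lambda)}s_\Lambda$ and that $s_{((n-k)^{s-1})}^\perp s_\Lambda = s_{\Lambda/((n-k)^{s-1})}$. The paper's proof is terser and does not spell out the degree bookkeeping or the non-cancellation check, but the substance is identical.
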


\begin{proof}
    By Theorem~\ref{thm:skew-formula}, the top degree of $\widetilde{H}_{n,\la,s}(x;q) = \grFrob(H^*(Y_{n,\la,s};\bQ))$ is $$s_{((n-k)^{s-1})}^\perp s_{\Lambda} = s_{\Lambda/((n-k)^{s-1})},$$ which is the graded Frobenius character of $S^{\Lambda/((n-k)^{s-1})}$. 
\end{proof}

\section{Proofs of the cocharge and charge formulas}\label{sec:equiv}

In this section, we use Theorem \ref{thm:skew-formula} to prove Theorems~\ref{conj:main} and \ref{conj:charge}.

\subsection{Proof of Theorem \ref{conj:main}}

We now deduce the cocharge formula (Theorem \ref{conj:main}) from Theorem \ref{thm:skew-formula}.
In particular, we wish to show that 
\begin{equation}\label{eq:goal}
    s_{((n-k)^{s-1})}^\perp \widetilde{H}_\Lambda(x;q)=\sum_{T\in \mathcal{T}^+(n,\la,s)} q^{\cc(T)}s_{\sh^+(T)}.
\end{equation}  
Recall also the following skewing formula for applying an adjoint Schur operator to another Schur function:
\begin{equation}
    s_\lambda^\perp s_\mu=s_{\mu/\lambda}
\end{equation}

We will prove the following more general lemma, from which Equation \eqref{eq:goal} immediately follows.  Define a \textbf{generalized battery-powered tableau} with (not necessarily rectangular) battery shape $\rho$ and content $\mu$ to be a pair $(D,B)$ of semistandard Young tableaux such that $\sh(B)=\rho$ and the total content of $D\cup B$ is $\mu$.  Write $\mathcal{T}^+(\rho,\mu)$ to be the set of all such pairs $T=(D,B)$, and write $\sh^+(T)=\sh(D)$.

\begin{lemma}
      We have $$s_\rho^\perp \widetilde{H}_\mu(x;q)=\sum_{T\in \mathcal{T}^+(\rho,\mu)}q^{\cc(T)}s_{\sh^+(T)}$$ where $\cc(T)=\cc(D\cdot B)=\cc(D\cup B)$ where $D\cup B$ is formed by placing $B$ down-and-right of $D$. 
\end{lemma}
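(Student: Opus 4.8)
The plan is to expand $\widetilde H_\mu(x;q)$ using the Lascoux--Sch\"utzenberger cocharge formula \eqref{eq:HL-Charge} (its $\revq$ version), apply $s_\rho^\perp$ term-by-term, and identify the resulting skew Schur functions combinatorially. Concretely, starting from
\[
\widetilde H_\mu(x;q) = \sum_{U\in\SSYT(\mu)} q^{\cc(U)} s_{\sh(U)},
\]
we apply $s_\rho^\perp$ and use $s_\rho^\perp s_{\sh(U)} = s_{\sh(U)/\rho}$. Then I would further expand each $s_{\sh(U)/\rho}$ in the Schur basis via the Littlewood--Richardson rule: $s_{\sh(U)/\rho} = \sum_{c} s_{\sh(c)}$, where $c$ ranges over semistandard Young tableaux of shape $\sh(U)/\rho$ whose reading word is a ballot (lattice) word, and $\sh(c)$ is the content of $c$. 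So the left-hand side becomes a sum over pairs $(U,c)$ with $U\in\SSYT(\mu)$, $\rho\subseteq\sh(U)$, and $c$ an LR-filling of $\sh(U)/\rho$.

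The heart of the argument is to biject these pairs $(U,c)$ with generalized battery-powered tableaux $T=(D,B)\in\mathcal{T}^+(\rho,\mu)$ in a cocharge-preserving, shape-tracking way; here $\sh^+(T)=\sh(D)$ must match $\sh(c)$, and $\cc(T):=\cc(D\cdot B)$ must match $\cc(U)$. The natural map is the following: given $(D,B)$, form $U := D\cdot B$ (RSK-insert the battery into the device), and let $c$ be the \emph{recording tableau} of this insertion — i.e., record in the cells of $\sh(U)/\sh(D)$ the order in which they were created, relabeled by which row of $B$ each inserted letter came from. This is exactly the setup of the standard bijective proof that $D\cdot B$ realizes the LR coefficient $c^{\sh(U)}_{\sh(D),\sh(B)}$ (see \cite{Fulton}): the recording data is an LR-filling of $\sh(U)/\sh(D)$ of content $\sh(B)=\rho$. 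Running this in reverse (reverse RSK / uncrowding guided by the LR-filling $c$) recovers $(D,B)$ from $(U,c)$, giving the bijection. Cocharge is preserved because $\cc$ is invariant under Knuth equivalence \cite{LascouxSchutzenberger} and $D\cdot B$ has reading word Knuth-equivalent to the concatenation of the reading words of $D$ and $B$, which is how $\cc(T)$ was defined; and the shape statistics match by construction: $\sh^+(T)=\sh(D)$ corresponds to $\sh(c)$ on the other side, while $\rho=\sh(B)$ is the content of $c$, matching the $s_\rho^\perp$ we applied.

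The main obstacle I anticipate is bookkeeping the shapes correctly: in the expansion we want $s_{\sh^+(T)}$ indexed by $\sh(D)$, but the LR rule naturally produces Schur functions indexed by the \emph{content} of the LR-filling, so I must be careful that the roles of ``outer shape minus inner shape'' versus ``content'' are assigned to the right pieces — specifically, that $\sh(U)$ plays the role of the large partition, $\sh(D)$ the inner partition (the thing that survives as $\sh^+(T)$), and $\rho$ the content of the LR data (the thing killed by $s_\rho^\perp$). Once that dictionary is pinned down, the only remaining check is that every pair $(D,B)$ with $\sh(B)=\rho$ and total content $\mu$ arises exactly once, which follows from invertibility of RSK insertion together with uniqueness of the LR recording tableau. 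I would then note that Equation \eqref{eq:goal}, hence Theorem \ref{conj:main}, is the special case $\rho = ((n-k)^{s-1})$, $\mu = \Lambda_{n,\la,s}$, after dividing by $q^{\binom{s-1}{2}(n-k)}$.
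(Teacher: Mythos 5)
Your proposal is essentially the same proof as the paper's: expand via Lascoux--Sch\"utzenberger, apply $s_\rho^\perp$ to get skew Schur functions, expand via Littlewood--Richardson, interpret the LR coefficients as counting pairs $(D,B)$, and invoke Knuth-invariance of cocharge. The bookkeeping wrinkle you flag (whether $\rho$ or $\sh(D)$ sits as the inner shape versus the content of the LR filling) is handled in the paper by directly citing the interpretation of $c^\nu_{\eta,\rho}$ as the number of pairs $(D,B)$ of SSYT of shapes $\eta$ and $\rho$ with $D\cdot B$ equal to a fixed tableau of shape $\nu$ (see \cite{Fulton}), which sidesteps the recording-tableau detour and the extra appeal to LR symmetry that your route would require.
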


\begin{proof}
Let $\SSYT(\mu)$ be the set of semistandard Young tableaux of content $\mu$ (of any shape), and let $\SSYT(\nu,\mu)$ be the set of semistandard Young tableaux of shape $\nu$ and content $\mu$.  From the Lascoux-Sch\"utzenberger formula \eqref{eq:lascoux-schutzenberger} for Hall-Littlewood polynomials, the left hand side above expands as 
\begin{align}
    s_{\rho}^\perp \sum_{T\in \SSYT(\mu)}q^{\cc(T)}s_{\sh(T)} &= \sum_{\nu}\sum_{T\in \SSYT(\nu,\mu)}q^{\cc(T)}s_{\nu/\rho}\\
    &= \sum_{\nu}\sum_{T\in \SSYT(\nu,\mu)}\sum_{\eta} q^{\cc(T)}c^{\nu}_{\eta,\rho} s_{\eta} \label{eq:LR}
\end{align}
where $c^{\nu}_{\eta,\rho}$ is the Littlewood-Richardson coefficient. For any fixed SSYT $T$ of shape $\nu$, we may interpret $c^{\nu}_{\eta,\rho}$ as the number of pairs $(D,B)$ of semistandard Young tableaux of shapes $\eta$ and $\rho$ respectively such that $D\cdot B=T$ (see \cite{Fulton}).  Since $\cc$ is invariant under jeu de taquin and RSK insertion, we have $\cc(D\cup B)=\cc(D\cdot B)=\cc(T)$.  Thus the sum above becomes 
\begin{align*}
    \sum_{\nu}\sum_{T\in \SSYT(\nu,\mu)}\sum_{\substack{D\cdot B=T \\ \sh(B)=\rho}}q^{\cc(D\cup B)} s_{\sh(D)}&=\sum_{(D,B)\in \mathcal{T}^+(\rho,\mu)}q^{\cc(D\cup B)}s_{\sh(D)}\\
    &=\sum_{T\in \mathcal{T}^+(\rho,\mu)}q^{\cc(T)}s_{\sh^+(T)}
\end{align*}
as desired.   
\end{proof}

From line \eqref{eq:LR} above, setting $\mu=\Lambda_{n,(1^k),k}$ and $\rho=((n-k)^{k-1})$, we can also deduce \[
    \langle s_\mu, \,\omega\circ \revq(\Delta'_{e_{k-1}}e_n)|_{t=0}) = \frac{1}{q^{\binom{k-1}{2}(n-k)}} \sum_{\nu\vdash k(n-k+1)} c_{\mu,((n-k)^{k-1})}^\nu \widetilde{K}_{\nu,((n-k+1)^k)}(q).
    \]
    Corollary \ref{cor:LR-Kostka} follows immediately by applying the $\revq$ operator.

\subsection{Proof of Theorem \ref{conj:charge}}

We now deduce the charge version of the main result, Theorem \ref{conj:charge}, from Theorem \ref{conj:main}.  For any partition $\nu$, recall that $\mathbf{n}(\nu)=\sum_i (i-1)\nu_i$.

\begin{prop}\label{prop:q-degree}
    The maximum value of $\cc(T)$ for $T\in \mathcal{T}^+(n,\la,s)$ is $$\mathbf{n}(\lambda)+\binom{s}{2}(n-k).$$ Moreover, there is precisely one battery-powered tableau $T$ with this value of $\cc$ for each device shape $\nu$ with $\ell(\nu)\le s$ and where $\nu/\lambda$ is a horizontal strip (and no tableaux with this value of $\cc$ for other device shapes). 
\end{prop}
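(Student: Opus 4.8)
The plan is to reduce the proposition to the extremal property of cocharge recalled in Section~\ref{sec:background} together with a Littlewood--Richardson computation. For $T=(D,B)\in\mathcal{T}^+(n,\la,s)$ we have $\cc(T)=\cc(D\cdot B)$, and since the combined content of $D$ and $B$ is $\Lambda:=\Lambda_{n,\la,s}$, the tableau $D\cdot B$ is a semistandard Young tableau of content $\Lambda$. Hence $\cc(T)$ is at most the maximum cocharge among all SSYT of content $\Lambda$, namely $\mathbf{n}(\Lambda)$, attained only by the tableau $U_\Lambda$ of shape $\Lambda$ whose $i$-th row consists entirely of $i$'s. Since $\Lambda_i=(n-k)+\lambda_i$ for $1\le i\le s$, a one-line computation gives $\mathbf{n}(\Lambda)=\mathbf{n}(\lambda)+\binom{s}{2}(n-k)$, so $\cc(T)\le\mathbf{n}(\lambda)+\binom{s}{2}(n-k)$, with equality if and only if $D\cdot B=U_\Lambda$.

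Next I would fix a device shape $\nu$ and count the $T=(D,B)$ with $\sh(D)=\nu$, battery shape $\rho:=((n-k)^{s-1})$, and $D\cdot B=U_\Lambda$. By the interpretation of Littlewood--Richardson coefficients used in the proof of Theorem~\ref{conj:main} above (that $c^\Lambda_{\nu,\rho}$ counts pairs $(D,B)$ of semistandard Young tableaux of shapes $\nu$ and $\rho$ with $D\cdot B$ a fixed tableau of shape $\Lambda$; see also~\cite{Fulton}), this count is exactly $c^\Lambda_{\nu,\rho}$. So it remains to show that $c^\Lambda_{\nu,\rho}$ equals $1$ when $\nu/\lambda$ is a horizontal strip and $\ell(\nu)\le s$, and equals $0$ otherwise; this simultaneously shows the bound above is attained, e.g.\ by $\nu=(\lambda_1+(n-k),\lambda_2,\lambda_3,\dots)$.

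For the Littlewood--Richardson computation I would use $c^\Lambda_{\nu,\rho}=c^\Lambda_{\rho,\nu}$ and count Littlewood--Richardson fillings of the skew shape $\Lambda/\rho$ of content $\nu$ (see~\cite{Fulton}). The key structural observation is that $\Lambda/\rho$ is the union of the cells in columns greater than $n-k$, which form a \emph{straight} Young diagram of shape $\lambda$, together with a single row of $n-k$ cells in row $s$, columns $1,\dots,n-k$; moreover, reading rows from top to bottom and each row from right to left, the reading word of $\Lambda/\rho$ is the reading word of the shape-$\lambda$ part followed by the $n-k$ entries of that long row in weakly decreasing order. Since the only lattice filling of a straight shape is the superstandard one, the shape-$\lambda$ part must equal $U_\lambda$; then the remaining $n-k$ entries carry no column constraints, so they are forced to be the weakly increasing rearrangement of the multiset with $\nu_i-\lambda_i$ copies of $i$ for each $i$. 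This already gives $c^\Lambda_{\nu,\rho}\le 1$, and the filling is legal exactly when (i)~$\nu\supseteq\lambda$, (ii)~the resulting word is a lattice word, and (iii)~when $\ell(\lambda)=s$, the largest of the $n-k$ new entries is at most $s$, so that row $s$ stays weakly increasing. I would then verify that (i) together with (ii) is equivalent to $\nu/\lambda$ being a horizontal strip — appending the weakly decreasing new word to the lattice word of $U_\lambda$ keeps it lattice precisely when $\lambda_{i-1}\ge\nu_i$ for all $i$ — and that, given a horizontal strip, (iii) is equivalent to $\ell(\nu)\le s$ and is automatic when $\ell(\lambda)<s$. Combining these equivalences yields the claimed value of $c^\Lambda_{\nu,\rho}$ and hence the proposition.

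The step I expect to be the main obstacle is this last verification: checking that appending the weakly decreasing reading word of the long row to the reading word of $U_\lambda$ produces a lattice word exactly under the horizontal-strip inequalities $\nu_{i+1}\le\lambda_i$, and correctly bookkeeping the boundary case $\ell(\lambda)=s$ — which includes the Delta-conjecture specialization $\lambda=(1^k)$, $s=k$ — where the long bottom row of $\Lambda/\rho$ abuts the straight-shape part, so that the weak-increase requirement on row $s$ becomes a genuinely separate condition rather than a consequence of the lattice condition.
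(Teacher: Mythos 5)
Your argument is correct, and it takes a genuinely different counting route from the paper's, though both rest on the same extremal fact that the maximum cocharge among all SSYT of content $\Lambda$ is $\mathbf{n}(\Lambda)$, attained uniquely by the superstandard tableau $U_\Lambda$. The paper then proceeds directly: it analyzes the cocharge subwords of the concatenated reading word of $(D,B)$ and deduces that maximality forces $B$ to be the superstandard rectangle (each column reading $1,2,\dots,s-1$) and $D$ to be the superstandard tableau of shape $\lambda$ with a horizontal strip of $n-k$ entries equal to $s$ appended, from which the asserted count is immediate. You instead reduce maximality to the equality $D\cdot B = U_\Lambda$, identify the number of pairs $(D,B)$ with $\sh(D)=\nu$ and $\sh(B)=\rho=((n-k)^{s-1})$ satisfying this as the Littlewood--Richardson coefficient $c^\Lambda_{\nu,\rho}$, and then evaluate that coefficient by analyzing the skew shape $\Lambda/\rho$. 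This offloads the combinatorics onto the LR rule at the cost of the case analysis in the skew-shape computation; the paper's version is more hands-on and constructive but avoids invoking the LR rule.

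One small point worth fixing: you describe the reverse reading word of $\Lambda/\rho$ as ``reading rows from top to bottom and each row from right to left,'' but in the paper's French convention — where row $s$ is drawn at the top and contains the long segment in columns $1,\dots,n-k$ — the word tested for the lattice condition reads rows from bottom to top. Under the correct convention your structural claim does hold: the $\lambda$-shaped part (columns $>n-k$) is read first and, being a straight shape whose only lattice filling is superstandard, contributes the superstandard lattice word, while the long segment of row $s$ is read last, in weakly decreasing order; this is true even in the delicate case $\ell(\lambda)=s$ where the two pieces abut in row $s$, because the long segment occupies the leftmost columns and is therefore reached only after the $\lambda$-cells of row $s$ under right-to-left reading. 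With that convention corrected, the rest of your verification — that the lattice condition and the row-$s$ semistandardness constraint together pin down exactly the shapes $\nu$ with $\nu/\lambda$ a horizontal strip and $\ell(\nu)\le s$, each contributing $c^\Lambda_{\nu,\rho}=1$ — goes through as you outline.
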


\begin{proof}
The maximal cocharge among all words of a given content $\Lambda$ occurs when each cocharge subword has its letters appearing in order from right to left, and in that case the cocharge is $\mathbf{n}(\Lambda)$.  For this to occur, the battery columns must be filled with $1,2,\ldots,s-1$ from bottom to top, for otherwise some entry of the battery $B$ would be to the right of the previous element in its cocharge subword.  The subwords starting at the $1$'s in the bottom of $B$ will then contain $1,2,\ldots,s$ from right to left, with the $s$ being in the device.  

For the cocharge subwords starting at $1$'s in the device $D$ to be in right to left order, $D$ must contain the unique tableau $D'$ of content $\lambda$ and shape $\lambda$ (with $\lambda_i$ entries $i$ in the $i$-th row for all $i$).  So, $D$ is formed by adding a horizontal strip of length $n-k$ labeled by $s$ to $D'$ such that the result is semistandard.  Thus there is one tableau of maximal cocharge for each shape of height $\le s$ formed by adding a horizontal strip to $\lambda$.

For such pairs $(D,B)$, we have $\cc(D,B)=\mathbf{n}(\Lambda)=\mathbf{n}(\lambda)+\binom{s}{2}(n-k)$, as desired.
\end{proof}

Dividing out by the factor $q^{\binom{s-1}{2}(n-k)}$, we obtain the following corollary.

\begin{corollary}\label{cor:top-degree}
The top $q$-degree of the polynomial on the right hand side of Theorem \ref{conj:main} is $d:=\mathbf{n}(\lambda)+(s-1)(n-k)$, and the coefficient of $q^d$ is $\sum s_\nu$ where the sum ranges over all partitions $\nu$ of $n$ with $\ell(\nu)\le s$ and $\nu/\lambda$ a horizontal strip.
\end{corollary}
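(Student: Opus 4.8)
The plan is to prove Corollary~\ref{cor:top-degree} directly from Corollary~\ref{cor:top-degree}'s immediate predecessor, Proposition~\ref{prop:q-degree}, together with the normalization in Theorem~\ref{conj:main}. Recall that Theorem~\ref{conj:main} asserts
\[
\widetilde{H}_{n,\la,s}(x;q)=\frac{1}{q^{\binom{s-1}{2}(n-k)}}\sum_{T\in\mathcal{T}^+(n,\la,s)}q^{\cc(T)}s_{\sh^+(T)}(x),
\]
so the right-hand side of Theorem~\ref{conj:main} is obtained from the cocharge generating polynomial over $\mathcal{T}^+(n,\la,s)$ by dividing by $q^{\binom{s-1}{2}(n-k)}$. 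Since dividing by a monomial shifts the top $q$-degree down by exactly that monomial's degree and does not alter the coefficient of the (shifted) top term, the result follows by a direct substitution once we know the top degree of the numerator.

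First I would invoke Proposition~\ref{prop:q-degree}, which states that $\max_{T\in\mathcal{T}^+(n,\la,s)}\cc(T)=\mathbf{n}(\lambda)+\binom{s}{2}(n-k)$, and moreover that the tableaux $T$ achieving this maximum are in bijection with device shapes $\nu$ satisfying $\ell(\nu)\le s$ and $\nu/\lambda$ a horizontal strip, each occurring exactly once. (Implicit in the statement is that such $\nu$ is a partition of $n$, since the device has content with $\lambda_i$ copies of $i$ and the additional $n-k$ copies of $s$ form the horizontal strip.) Subtracting the normalization exponent gives
\[
d:=\left(\mathbf{n}(\lambda)+\binom{s}{2}(n-k)\right)-\binom{s-1}{2}(n-k)=\mathbf{n}(\lambda)+\left(\binom{s}{2}-\binom{s-1}{2}\right)(n-k)=\mathbf{n}(\lambda)+(s-1)(n-k),
\]
using $\binom{s}{2}-\binom{s-1}{2}=s-1$. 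This is exactly the claimed top degree.

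For the coefficient of $q^d$, I would observe that each maximal-cocharge tableau $T$ contributes the single Schur function $s_{\sh^+(T)}=s_\nu$ to the $q^{\cc(T)}$ coefficient of the numerator, and after dividing by the monomial these contributions land precisely in the $q^d$ coefficient. By the bijection in Proposition~\ref{prop:q-degree}, summing over all maximal $T$ yields $\sum_\nu s_\nu$, the sum ranging over partitions $\nu$ of $n$ with $\ell(\nu)\le s$ and $\nu/\lambda$ a horizontal strip, as claimed.

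I do not anticipate a genuine obstacle here, since all the combinatorial work has already been carried out in Proposition~\ref{prop:q-degree}; the only points requiring care are the arithmetic identity $\binom{s}{2}-\binom{s-1}{2}=s-1$ and the bookkeeping observation that no two distinct maximal tableaux share the same device shape, so that the $q^d$ coefficient is multiplicity-free. Both are routine, so Corollary~\ref{cor:top-degree} follows.
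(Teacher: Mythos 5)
Your proposal matches the paper's argument exactly: the paper states that Corollary~\ref{cor:top-degree} follows by ``dividing out by the factor $q^{\binom{s-1}{2}(n-k)}$'' from Proposition~\ref{prop:q-degree}, which is precisely the shift-and-bookkeep step you carry out, including the arithmetic $\binom{s}{2}-\binom{s-1}{2}=s-1$. The argument is correct and complete.
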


The value $d$ matches with the formula given for the top degree of $\grFrob_q(R_{n,\la,s})$ in \cite{Griffin-Thesis}.  In \cite{GLW}, it was shown that the coefficient of $q^d$ is the \textit{skew Schur function} $s_{\Lambda/((n-k)^{s-1})}$.  A straightforward application of the Littlewood-Richardson rule shows that this agrees with our formula in Corollary \ref{cor:top-degree}, and we refer to \cite{GG} for details.

Finally, we show that Theorems \ref{conj:main} and \ref{conj:charge} are equivalent.  Taking the $q$-reversal of both sides of Theorem \ref{conj:main}, we have $$\revq \left(\widetilde{H}_{n,\lambda,s}\right) = \sum_{T\in \mathcal{T}^+(n,\lambda,s)} q^{\mathbf{n}(\lambda)+(n-k)(s-1)-\cc(T)+\binom{s-1}{2}(n-k)}s_{\sh^+(T)}.$$ Then the exponent $\mathbf{n}(\lambda)+(n-k)(s-1)-\cc(T)+\binom{s-1}{2}(n-k)$ is equal to $\mathbf{n}(\Lambda)-\cc(T)$, which is simply $\ch(T)$ by the definition of charge.  This gives Theorem \ref{conj:charge}.

\section{The case \texorpdfstring{$s=2$}{}}\label{sec:s=2}

In this section, we give a second proof of Theorem~\ref{conj:main} in the case when $s=2$ using combinatorial bijections and previously known formulas for $\widetilde{H}_{n,\la,s}$. We start by recalling the Hall-Littlewood expansion of $\widetilde{H}_{n,\la,s}$.

\subsection{Hall-Littlewood expansion}

In \cite{Griffin-HLExpansion}, it is shown that $\widetilde{H}_{n,\la,s}$ has the following expansion in terms of Hall-Littlewood polynomials.

\begin{equation}\label{eq:HL-expansion}
    \widetilde{H}_{n,\lambda,s}(X;q)=\revq\left(\sum_{\substack{\mu\vdash n,\\ \mu \supset \lambda,\\ \ell(\mu)\le s}} q^{\mathbf{n}(\mu/\lambda)}\sum_{\substack{\alpha=(\alpha_1,\ldots,\alpha_s)\vDash n,\\ \alpha\supset \lambda, \sort(\alpha)=\mu}} q^{\coinv(\alpha)} H_\mu(x;q)\right),
\end{equation} where $\alpha = (\alpha_1,\dots,\alpha_s)\vDash n$ indicates that $\alpha$ is a weak composition of $n$ with $s$ parts, $\mathbf{n}(\mu/\lambda)=\sum_i \binom{\mu_i'-\lambda_i'}{2},$ and $\mathrm{coinv}(\alpha)$ is the number of pairs $i<j$ with $\alpha_i<\alpha_j$.

Note that if $\alpha$ is a composition such that $\alpha\supset \la$,  then since $\lambda$ is a partition we also have $\sort(\alpha)\supset \lambda$.  Thus we can rearrange the summation above as 

\begin{equation}\label{eq:HL-expansion-comp}
 \widetilde{H}_{n,\lambda,s}(X;q)=\revq\left(\sum_{\substack{\alpha=(\alpha_1,\ldots,\alpha_s)\models n,\\ \alpha\supset \lambda}} q^{\mathbf{n}(\alpha/\lambda)+\coinv(\alpha)} H_{\sort(\alpha)}(x;q)\right)
\end{equation}
where the quantity $\mathbf{n}(\alpha/\lambda)$ above is defined to be $\mathbf{n}(\mu/\lambda)$ where $\mu=\sort(\alpha)$.

%The quantity $\mathbf{n}(\alpha/\lambda)$ above is defined to be the statistic formed by drawing the (not necessarily skew) shape of $\alpha/\lambda$, and for each column of boxes (which are not necessarily connected columns), write $0$ in the bottommost box, $1$ in the next, and so on.  Then add up all the numbers in the boxes.  Since defining $\mu=\sort(\alpha)$ gives a shape $\mu/\lambda$ whose columns each have the same number of boxes as in $\alpha/\lambda$, this is the same as the quantity $\mathbf{n}(\mu/\lambda)$ given in the first expansion.

\begin{comment}
Can be rewritten as
$$\widetilde{H}_{n,\lambda,s}(X;q)=\revq\left(\sum_{\substack{\mu\vdash n,\\ \mu \supset \lambda,\\ \ell(\mu)\le s}} q^{\mathbf{n}(\mu/\lambda)}\prod_i\binom{\mu_i'-\lambda_{i+1}'}{\mu_i'-\mu_{i+1}'}_q H_\mu(x;q)\right),$$
where $\mu'_0\coloneqq s$.
\end{comment}

  Substituting \eqref{eq:HL-Charge} into \eqref{eq:HL-expansion-comp} yields 
\begin{equation}\label{eq:alpha-expansion}
    \revq\left(\widetilde{H}_{n,\lambda,s}(X;q)\right)=\sum_{\substack{\alpha=(\alpha_1,\ldots,\alpha_s)\models n,\\ \alpha\supset \lambda}}\sum_{T\in \SSYT(\sort(\alpha))} q^{\mathbf{n}(\alpha/\lambda)+\coinv(\alpha)+\ch(T)} s_{\sh(T)}.
\end{equation}

Thus, to prove Theorem \ref{conj:charge} it suffices to show that
\begin{equation}
    \sum_{T\in \mathcal{T}^+(n,\la,s)} q^{\ch(T)}s_{\sh^+(T)}=\sum_{\substack{\alpha=(\alpha_1,\ldots,\alpha_s)\models n,\\ \alpha\supset \lambda}}\sum_{U\in \SSYT(\sort(\alpha))} q^{\mathbf{n}(\alpha/\lambda)+\coinv(\alpha)+\ch(U)} s_{\sh(U)}.
\end{equation}
In particular, it suffices to find a shape-preserving bijection from $\mathcal{T}^+(n,\la,s)$ to $$\mathcal{A}(n,\lambda,s):=\{(\alpha,U)| \alpha=(\alpha_1,\ldots,\alpha_s)\models n, \alpha \supset \lambda, U\in \SSYT(\sort(\alpha))\}$$ such that, if $T\in \mathcal{T}^+(n,\la,s)$ maps to $(\alpha,U)\in \mathcal{A}(n,\lambda,s)$, then $\ch(T)=\ch(U)+\mathbf{n}(\alpha/\lambda)+\coinv(\alpha)$. In the next subsection, we find such a bijection in the case $s=2$.

\subsection{Combinatorial proof for \texorpdfstring{$s=2$}{}}  For the remainder of this section, let $\lambda=(\lambda_1,\lambda_2)$ be a partition of size $k$ with $\lambda_1\ge \lambda_2\ge 0$, and let $\mathrm{Alpha}(n,\lambda,2)$ be the set of all (weak) compositions $\alpha = (\alpha_1,\alpha_2)$ of size $n$ such that $\alpha\supset \lambda$.

\begin{definition}
For $\alpha \in \mathrm{Alpha}(n,\lambda,2)$, define $\varphi(\alpha)$ to be the composition formed by taking $\mathbf{n}(\alpha/\lambda)+\coinv(\alpha)$ boxes from the bottom row of $\sort(\alpha)$ and moving them to the top row.
\end{definition}

As a running example, let $n=11$, $\la = (3,1)$, $s=2$, and $\alpha = (5,6)$. Then $\mathbf{n}(\alpha/\la) + \coinv(\alpha) = 2+1=3$. Since $\sort(\alpha) = (6,5)$, then $\varphi(\alpha) = (3,8)$.

\begin{prop}\label{prop:alpha-2-row}
    The map $\varphi$ on compositions is a bijection from $\mathrm{Alpha}(n,\lambda,2)$ to itself. 
\end{prop}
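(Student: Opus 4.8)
The plan is to show $\varphi$ maps $\mathrm{Alpha}(n,\lambda,2)$ into itself and is a bijection by exhibiting it as a bijection onto its image via an explicit analysis of the two cases $\alpha_1 \le \alpha_2$ and $\alpha_1 > \alpha_2$. First I would set notation: write $\lambda = (\lambda_1,\lambda_2)$, let $\alpha = (\alpha_1,\alpha_2)$ with $\alpha_i \ge \lambda_i$, let $\mu = \sort(\alpha)$, and let $m = \mathbf{n}(\alpha/\lambda) + \coinv(\alpha)$, so that $\varphi(\alpha) = (\mu_2 - m, \mu_1 + m)$ as a composition (noting the result is listed bottom-row-first). The two quantities simplify: $\coinv(\alpha)$ is $1$ if $\alpha_1 < \alpha_2$ and $0$ otherwise; and since $\mu/\lambda$ has at most two rows, $\mathbf{n}(\mu/\lambda) = \binom{\mu_2'-\lambda_2'}{2} + \binom{\mu_1'-\lambda_1'}{2}$ — but with only two rows this column-count is easier to read off directly as $\max(0,\mu_2 - \lambda_2) = \mu_2 - \lambda_2$ when $\mu_2 \ge \lambda_2$, which always holds. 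So I would record the clean formula $m = (\mu_2 - \lambda_2) + [\alpha_1 < \alpha_2]$.

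The first key step is to check $\varphi(\alpha) \in \mathrm{Alpha}(n,\lambda,2)$, i.e. that $\varphi(\alpha)$ is a weak composition of $n$ containing $\lambda$. That it sums to $n$ is immediate since we only moved boxes. For containment: the bottom row of $\varphi(\alpha)$ has length $\mu_2 - m = \lambda_2 - [\alpha_1 < \alpha_2]$, so I must check this is $\ge \lambda_2$ — but it can be $\lambda_2 - 1 < \lambda_2$! This is the subtlety: containment $\alpha \supset \lambda$ for a composition means entrywise $\alpha_i \ge \lambda_i$ with the \emph{indices fixed}, not after sorting. So in the case $\alpha_1 < \alpha_2$, the resulting bottom entry $\lambda_2 - 1$ still needs $\ge \lambda_2$? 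No — I think the correct reading is that $\varphi(\alpha)$'s bottom row is $\mu_2 - m$ and one must verify the pair $(\mu_2 - m,\, \mu_1 + m)$ is again $\supset \lambda$ in the sense that \emph{some} arrangement works, or that the intended containment is of the top entry. I would resolve this by carefully splitting into the cases $\alpha_1 \le \alpha_2$ (so $\mu = (\alpha_2,\alpha_1)$, $m = \alpha_1 - \lambda_2 + 1$ hence $\mu_2 - m = \lambda_2 - 1$) versus $\alpha_1 > \alpha_2$ (so $\mu = (\alpha_1,\alpha_2)$, $m = \alpha_2 - \lambda_2$ hence $\mu_2 - m = \lambda_2$), and showing in each case that the output, read as the composition $(\text{bottom}, \text{top})$, lands in $\mathrm{Alpha}(n,\lambda,2)$ and moreover that the two cases have \emph{disjoint} images that \emph{exhaust} $\mathrm{Alpha}(n,\lambda,2)$ — this disjointness-and-exhaustion is precisely what gives bijectivity.

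The cleanest way to finish is to construct the inverse explicitly. Given $\beta = (\beta_1,\beta_2) \in \mathrm{Alpha}(n,\lambda,2)$, I would argue that exactly one preimage $\alpha$ exists, determined by whether $\beta_1 = \lambda_2$ (the "came-from-$\alpha_1 > \alpha_2$" branch) or $\beta_1 = \lambda_2 - 1$ (the other branch) — and by a monotonicity/range argument these are the only bottom-row values that occur, with the top-row value then pinning down $\alpha$ uniquely by reversing "move $m$ boxes." The main obstacle I anticipate is handling the boundary/degenerate cases cleanly: when $\lambda_2 = 0$, when $\alpha_1 = \alpha_2$, and when $m$ is large enough that $\mu_2 - m$ would go negative — I expect one must verify $m \le \mu_2$ always (equivalently $\mathbf{n}(\alpha/\lambda) + \coinv(\alpha) \le \mu_2$), which should follow from $\mathbf{n}(\mu/\lambda) = \mu_2 - \lambda_2 \le \mu_2 - 0$ with the $+1$ from coinv absorbed because in the coinv case $\mu_2 = \alpha_1 \ge \lambda_1 \ge \lambda_2$, giving room. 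Once these inequalities are nailed down, the bijection and its inverse are both explicit piecewise-linear maps and the verification is routine bookkeeping, so I would present it compactly as a two-case table rather than grinding every subcase.
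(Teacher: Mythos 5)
Your overall strategy — check that $\varphi$ lands in $\mathrm{Alpha}(n,\lambda,2)$ and then construct an explicit inverse via a two-case analysis — is in the right spirit, but the proposal rests on two concrete computational errors that derail it.

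First, you have the rows of $\sort(\alpha)$ backwards. In French notation the bottom row of the partition $\mu = \sort(\alpha)$ is $\mu_1 = \max(\alpha_1,\alpha_2)$, the \emph{longer} one, so $\varphi(\alpha) = (\mu_1 - m,\ \mu_2 + m)$, not $(\mu_2 - m,\ \mu_1 + m)$. Second, your formula $\mathbf{n}(\mu/\lambda) = \mu_2 - \lambda_2$ is incorrect: the quantity $\mathbf{n}(\mu/\lambda) = \sum_i \binom{\mu_i' - \lambda_i'}{2}$ sums over all \emph{columns} $i$, and in the two-row case $\binom{\mu_i' - \lambda_i'}{2} = 1$ only when $\mu_i' = 2$ and $\lambda_i' = 0$, so $\mathbf{n}(\mu/\lambda) = \max(0,\, \mu_2 - \lambda_1)$. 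Plugging the paper's running example ($\lambda = (3,1)$, $\alpha = (5,6)$, $\mu = (6,5)$) into your formulas gives $m = (5-1)+1 = 5$ and a bottom row of $\lambda_2 - 1 = 0$, whereas the correct answer is $m = (5-3)+1 = 3$ and $\varphi(\alpha) = (6-3,\, 5+3) = (3,8)$.

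These errors feed directly into your structural claim that the bottom entry of $\varphi(\alpha)$ is always $\lambda_2$ or $\lambda_2-1$, which would then pin down the inverse. That claim is false — in the example the bottom entry is $3 = \lambda_1$ — and in fact the first coordinate of $\varphi(\alpha)$ ranges over many values. The actual case split that makes the inverse work is not on the value of $\beta_1$ but on the \emph{parity} of $d = \beta_2 - \lambda_1$: when $d = 2r$ is even, take $\alpha = (n-\lambda_1 - r,\, \lambda_1 + r)$ (a partition, so $\coinv = 0$); when $d = 2r+1$ is odd, take $\alpha = (\lambda_1 + r,\, n - \lambda_1 - r)$ (not a partition, so $\coinv = 1$). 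In both cases $\mathbf{n}(\alpha/\lambda) = r$ and one checks $\varphi(\alpha) = \beta$, with the inequality $n \ge 2\lambda_1 + d$ (which follows from $\beta_1 \ge \lambda_1$) guaranteeing $\alpha \supset \lambda$ and the right sign of $\coinv$. So while your high-level plan is sound, you would need to correct the formulas for $m$ and for $\varphi(\alpha)$ and replace the $\beta_1$-based case split with the parity-of-$d$ split before the argument closes.
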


\begin{proof}
  We first show that if $\alpha\in \mathrm{Alpha}(n,\lambda,2)$ then $\varphi(\alpha)\in \mathrm{Alpha}(n,\lambda,2)$.   Indeed, we have $\coinv(\alpha)=0$ or $1$ according to whether $\alpha_1\ge \alpha_2$ or $\alpha_1<\alpha_2$, and $\mathbf{n}(\alpha/\lambda)$ is the number of columns of $\alpha$ to the right of column $\lambda_1$ containing two squares.  Thus $\mathbf{n}(\alpha/\lambda)+\coinv(\alpha)$ is at most $\max(\alpha_1,\alpha_2)$.  Since $\varphi(\alpha)$ is formed by moving $\mathbf{n}(\alpha/\lambda)+\coinv(\alpha)$ from $\sort(\alpha)_1=\max(\alpha_1,\alpha_2)$ to $\sort(\alpha)_2$, we have $\varphi(\alpha)_1\ge \lambda_1$, and so $\varphi(\alpha)$ still contains $\lambda$.

  We now show that $\varphi:\mathrm{Alpha}(n,\lambda,2)\to \mathrm{Alpha}(n,\lambda,2)$ is surjective (and hence bijective).  Let $\beta\in \mathrm{Alpha}(n,\lambda,2)$.  If $\beta_2<\lambda_1$, then $\varphi(\beta)=\beta$.  Otherwise, let $d=\beta_2-\lambda_1$.  
  
  If $d$ is even, say $d=2r$, then set $\alpha=(n-(\lambda_1+r),\lambda_1+r)$.  Notice that the first $\lambda_1$ columns of $\beta$ contain $2\lambda_1$ squares, and there are at least $2r$ squares in the remaining columns, so $n\ge 2\lambda_1+2r$.  Thus $n-\lambda_1-r\ge \lambda_1+r$, and so $\alpha$ is a partition, with $\coinv(\alpha)=0$.  The same inequality also shows that $\alpha\supset \lambda$. Thus $\mathbf{n}(\alpha/\lambda)=r$ and it follows that $\varphi(\alpha)=\beta$.
  
  If $d$ is odd, say $d=2r+1$, then set $\alpha=(\lambda_1+r,n-\lambda_1-r)$.  The same calculation as above shows that $\alpha\supset \lambda$ and $\alpha$ is not a partition, so $\coinv(\alpha)=1$.  Furthermore, we again have $\mathbf{n}(\alpha/\lambda)=r$, so $\varphi(\alpha)=\beta$.
\end{proof}

We now construct a bijection from $\mathcal{A}(n,\lambda,2)$ to $\mathcal{T}^+(n,\lambda,s)$ as follows.

\begin{definition}
Let $(\alpha,U)\in \mathcal{A}(n,\lambda,2)$. Define $\psi(\alpha,U)$ to be the tableau formed by changing $1$'s to $2$'s in the bottom row of $U$, starting with the rightmost $1$ and moving leftwards, until we obtain a tableau of content $\varphi(\alpha)$.
\end{definition}

Continuing our running example with $\alpha = (5,6)$, letting $U$ be the following tableau with $\ch(U)=2$, then $\psi(\alpha, U)$ is as below:
\begin{align*}
U = {\small\begin{ytableau}
      2 & 2 & 2 & \none & \none & \none & \none & \none\\
      1 & 1 & 1 & 1 & 1 & 1 & 2 & 2
    \end{ytableau}}\,,   \hspace{1cm} 
\psi(\alpha,U) = {\small\begin{ytableau}
      2 & 2 & 2 & \none & \none & \none & \none & \none\\
      1 & 1 & 1 & 2 & 2 & 2 & 2 & 2
    \end{ytableau}}
\end{align*}

\begin{remark}
The tableau $\psi(\alpha,U)$ is not necessarily semistandard; it may have columns containing two $2$'s.
\end{remark}

\begin{definition}\label{def:Phi}
 Let $(\alpha,U)\in \mathcal{A}(n,\lambda,2)$.  Define $\Phi(\alpha,U)$ as follows.  First, compute $\psi(\alpha,U)$, and append $1$'s to the left of the bottom row and $2$'s to the left of the top row until the resulting tableau $S$ has content $\Lambda$ (and then left-justifying).  Then, unbump a horizontal strip of size $n-k$ from $S$ from right to left to form a tableau $T$ of the same shape as $U$, and an unbumped row of length $n-k$ that acts as the battery of $T$.  We set $\Phi(\alpha,U)=T$.
 \end{definition}

  For our running example, we have $\Lambda_{n,\la,s} = (10,8)$ and
\[
\Phi(\alpha,U) = {\small\begin{ytableau}
      2 & 2 & 2 \\
      1 & 1 & 1 & 1 & 1 & 1 & 1 & 1\\
      \none & \none & \none & \none & \none & \none & \none & \none & 1 & 1 & 2 & 2 & 2 & 2 & 2
    \end{ytableau}}
\]
so that $\ch(\Phi(\alpha,U)) = 5 = \ch(U) + \mathbf{n}(\alpha/\la)+\coinv(\alpha)$. 

\begin{lemma}
    The tableau $T=\Phi(\alpha,U)$ is always well defined and in $\mathcal{T}^+(n,\lambda,2)$.
\end{lemma}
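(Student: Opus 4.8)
The plan is to verify the two nontrivial assertions in the lemma: that $\Phi$ produces a well-defined tableau, i.e. that every step in Definition~\ref{def:Phi} can actually be carried out, and that the output $T$ lies in $\mathcal{T}^+(n,\lambda,2)$, i.e. its battery is rectangular of shape $(s-1)\times(n-k) = 1\times(n-k)$ and the total content of device and battery is $\Lambda_{n,\la,s}$.

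First I would examine the intermediate tableau $\psi(\alpha,U)$. Since $(\alpha,U)\in\mathcal{A}(n,\lambda,2)$, $U$ has content $\sort(\alpha)$, which is a two-row shape, and the bottom row of $U$ has $\sort(\alpha)_1 \ge \sort(\alpha)_2$ ones, of which $\sort(\alpha)_2$ sit above $2$'s. By Proposition~\ref{prop:alpha-2-row}, $\varphi(\alpha)\in\mathrm{Alpha}(n,\lambda,2)$ is obtained by moving $m := \mathbf{n}(\alpha/\lambda)+\coinv(\alpha)$ boxes from the bottom to the top row of $\sort(\alpha)$; as shown in the proof of that proposition, $m \le \max(\alpha_1,\alpha_2) = \sort(\alpha)_1$, which is exactly the number of $1$'s in the bottom row of $U$, so the operation of changing $m$ many $1$'s to $2$'s (rightmost first) in the bottom row is well-defined and yields a tableau of content $\varphi(\alpha)$. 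Then appending $1$'s to the left of the bottom row and $2$'s to the left of the top row to reach content $\Lambda_{n,\la,s} = (n-k+\lambda_1,\, n-k+\lambda_2)$: here I need $\varphi(\alpha)_1 \le n-k+\lambda_1$ and $\varphi(\alpha)_2 \le n-k+\lambda_2$. Since $\varphi(\alpha)\supset\lambda$ and $\varphi(\alpha)\models n$, we have $\varphi(\alpha)_1 \le n - \lambda_2 = (n-k)+\lambda_1$ and $\varphi(\alpha)_2 \le n - \lambda_1 \le (n-k)+\lambda_2$, so both inequalities hold and $S$ has content $\Lambda$. Left-justifying keeps it a legitimate (possibly non-semistandard in columns, as the Remark notes, but that is irrelevant to what we need) filling of a two-row shape, but in fact $S$ \emph{is} semistandard: its bottom row is $1^a 2^b$ and its top row is all $2$'s of length $\le$ the bottom, so columns strictly increase.

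Next I would handle the unbumping step. We need to reverse-RSK a horizontal strip of size $n-k$ out of $S$ from right to left, landing on a device $T$ of the same shape as $U$ plus a battery row of length $n-k$. Since $S$ is an honest SSYT (two rows, $1^a2^b$ over $2^c$ with $a\ge c$), and $\sh(S) = \sort(\varphi(\alpha)) = \sort(\alpha)$ extended — actually $\sh(S)$ has bottom row $\Lambda_1 = n-k+\lambda_1$ and top row $c$ where $c$ is the number of $2$'s in the top, and one checks $\sh(S)/\sh(U)$ is a horizontal strip of size $(n-k+\lambda_1)+(c) - n = n-k$ (using that the top rows of $S$ and $U$ have the same length, since going from $U$ to $S$ we only modified the bottom row by $\psi$ and then prepended $1$'s to the bottom and $2$'s to the top but the top prepend count equals... here I must be careful). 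The cleanest route is: $S$ is obtained from $U$ by first changing $m$ bottom-row $1$'s to $2$'s (this changes content but not shape), then prepending $n-k$ boxes total split between the two rows. So $\sh(S)/\sh(U)$ consists of those $n-k$ prepended boxes; after left-justification this is a horizontal strip precisely when at most one box is prepended per column, i.e. when the number prepended to the bottom row is at least the number prepended to the top. The bottom gets $\Lambda_1 - (\text{bottom length of }\psi(\alpha,U)) = (n-k+\lambda_1) - \sort(\alpha)_1$ and the top gets $(n-k+\lambda_2) - \sort(\alpha)_2$; their difference is $(n-k+\lambda_1-\sort(\alpha)_1) - (n-k+\lambda_2-\sort(\alpha)_2) = (\lambda_1-\lambda_2) + (\sort(\alpha)_2 - \sort(\alpha)_1) \ge 0$ iff $\lambda_1 - \lambda_2 \ge \sort(\alpha)_1 - \sort(\alpha)_2$. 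This inequality is the crux and I expect it to be the main obstacle; it should follow from $\alpha \supset \lambda$ together with $\alpha\models n$ being a two-part composition, and I would prove it by the bound $\sort(\alpha)_1 - \sort(\alpha)_2 = |\alpha_1 - \alpha_2|$ combined with $\min(\alpha_1,\alpha_2)\ge\lambda_2$ and $\alpha_1+\alpha_2 = n$. Actually what is true and suffices: since $\sh(S)/\sh(U)$ need not literally be the prepended strip after rectification — rather, the correct statement is that unbumping $n-k$ times from the right of an SSYT always succeeds and produces a shape $T$ with $\sh(S)/\sh(T)$ a horizontal strip, and the real content to check is that $\sh(T) = \sh(U)$, equivalently $\sh(S)/\sh(U)$ is a horizontal strip — so I would verify the displayed inequality and conclude.

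Finally, with $\sh(T)=\sh(U)$ established, the device $T$ is an SSYT of the same shape as $U$ with $\ell(\sh(T)) = \ell(\sh(U)) \le 2 = s$, the battery is the unbumped row, which has length exactly $n-k = (n-k)$ and $s-1 = 1$ row, hence is rectangular of shape $(s-1)\times(n-k)$ as required, and the total content of $(D,B)$ equals the content of $S$, which is $\Lambda = \Lambda_{n,\la,s}$ — since RSK unbumping preserves the multiset of entries. Therefore $\Phi(\alpha,U) = T \in \mathcal{T}^+(n,\lambda,2)$, and all steps of the construction were shown to be executable, so $\Phi$ is well-defined. I would present the argument in this order: (1) $\psi$ well-defined via the bound $m\le\sort(\alpha)_1$; (2) $S$ has content $\Lambda$ and is SSYT, via the two inequalities on $\varphi(\alpha)$; (3) the horizontal-strip inequality $\lambda_1-\lambda_2\ge\sort(\alpha)_1-\sort(\alpha)_2$, giving $\sh(T)=\sh(U)$ and a valid unbumping; (4) conclude membership in $\mathcal{T}^+(n,\lambda,2)$.
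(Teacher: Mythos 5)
Your plan matches the paper's at a high level: check that $\psi$ and the appending step are well-defined, observe $S$ is an SSYT of content $\Lambda$, then justify the unbumping. You are also right that the genuinely nontrivial point is that $\sh(S)/\sh(U)$ is a horizontal strip (the paper's one-sentence justification, ``since the shape of $S$ contains the shape of $U$,'' does not by itself imply this). However, your reduction of that condition to $\lambda_1-\lambda_2\ge\sort(\alpha)_1-\sort(\alpha)_2$ is incorrect, and that inequality is false: for $n=5$, $\lambda=(2,1)$, $\alpha=(4,1)$ one has $\lambda_1-\lambda_2=1<3=\sort(\alpha)_1-\sort(\alpha)_2$, yet the construction works there (for $U$ of shape $(4,1)$ we get $m=0$, $\psi(\alpha,U)=U$, $S$ of shape $(4,3)$, and $(4,3)/(4,1)$ is a horizontal strip). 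Two separate errors produce the bad inequality. First, the number of $1$s appended to the bottom row is $\Lambda_1-\varphi(\alpha)_1=\Lambda_1-\sort(\alpha)_1+m$, not $\Lambda_1-\sort(\alpha)_1$: you dropped the $m=\mathbf{n}(\alpha/\lambda)+\coinv(\alpha)$ contributed by $\psi$ converting $m$ ones to twos (likewise the top row receives $\Lambda_2-\sort(\alpha)_2-m$ twos); you also conflated the bottom-row length of $\psi(\alpha,U)$ with $\sort(\alpha)_1$, but $U$ has content $\sort(\alpha)$, not shape $\sort(\alpha)$, so its bottom row can be longer. Second, for two-row shapes the horizontal-strip criterion is $\sh(S)_2\le\sh(U)_1$, i.e.\ the number of boxes appended to the top row must be at most $\sh(U)_1-\sh(U)_2$; it is not the same as (number appended to bottom) $\ge$ (number appended to top).

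With correct bookkeeping, using $\sh(U)_1-\sh(U)_2\ge\sort(\alpha)_1-\sort(\alpha)_2$ (all the $1$s lie in the bottom row of $U$, so $\sh(U)_1\ge\sort(\alpha)_1$) together with $\varphi(\alpha)=(\sort(\alpha)_1-m,\,\sort(\alpha)_2+m)$, the horizontal-strip condition reduces to $\sort(\alpha)_2-\lambda_1\le m$, which holds because $\mathbf{n}(\alpha/\lambda)=\max(0,\sort(\alpha)_2-\lambda_1)$ in the two-row case and $m\ge\mathbf{n}(\alpha/\lambda)$. You should also state explicitly that unbumping a horizontal strip from right to left yields a weakly increasing sequence, so the battery is a legitimate single-row SSYT; the paper makes this point, and it is needed to conclude $T\in\mathcal{T}^+(n,\lambda,2)$.
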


\begin{proof}
  We first note that the intermediate tableau $S$ in Definition \ref{def:Phi} is semistandard, even though $\psi(\alpha,U)$ does not have to be; since $S$ has partition content $\Lambda$ and all of the $1$'s are in the bottom row, this follows immediately.  
  Now, since the shape of $S$ contains the shape of $U$, we can unbump the appropriate horizontal strip from right to left to form $T$.  The resulting letters that were bumped out are in weakly decreasing order from right to left, and therefore form a valid $1\times (n-k)$ battery for $T$.  Finally, since $S$ has content $\Lambda$ by default, the conclusion follows.
\end{proof}

\begin{lemma}
    If $T=\Phi(\alpha,U)$ then $\ch(T)=\ch(U)+\mathbf{n}(\alpha/\lambda)+\coinv(\alpha)$.
\end{lemma}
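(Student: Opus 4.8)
The plan is to track the charge statistic through the three steps in the construction of $T=\Phi(\alpha,U)$: first passing from $U$ to $\psi(\alpha,U)$, then prepending $1$'s and $2$'s to obtain the intermediate tableau $S$, and finally unbumping a horizontal strip of size $n-k$ to split $S$ into the device $D$ and battery $B$ of $T$. The last step is immediate: by construction $S=D\cdot B$ is the RSK insertion of the battery into the device, and charge is invariant under Knuth equivalence (equivalently, under jeu de taquin rectification), so $\ch(T)=\ch(S)$. Hence it suffices to prove $\ch(S)=\ch(U)+\mathbf{n}(\alpha/\lambda)+\coinv(\alpha)$, which I will get by computing $\ch(S)$ and $\ch(U)$ separately from a single formula.

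The key computational input is a charge formula for any semistandard Young tableau $V$ with at most two rows, content equal to a two-part partition $(\mu_1,\mu_2)$, and all of its $1$'s in the bottom row: if $j$ denotes the length of the top row of $V$, then the reading word of $V$ is $2^{j}\,1^{\mu_1}\,2^{\mu_2-j}$, and a direct analysis of its charge subwords (each length-$2$ subword consists of a $1$ and a $2$; a $2$ coming from the top row contributes $0$ and a $2$ coming from the bottom row contributes $1$) yields $\ch(V)=\mu_2-j=\mu_2-\sh(V)_2$. I expect this bookkeeping to be the only genuinely delicate point; one could instead invoke the known fact that the Kostka--Foulkes polynomial $\widetilde{K}_{(\mu_1+\mu_2-j,\,j),\,(\mu_1,\mu_2)}(q)$ equals $q^{j}$ and apply $\revq$, but the hands-on subword computation keeps the argument self-contained.

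Now I apply this formula twice. The tableau $U$ has content $\sort(\alpha)$, at most two rows, and all of its $1$'s in the bottom row, so $\ch(U)=\sort(\alpha)_2-\sh(U)_2$. For $S$: by the preceding well-definedness lemma $S$ is semistandard; it has at most two rows since its shape is that of $\psi(\alpha,U)$ (equal to $\sh(U)$) after prepending cells on the left; all of its $1$'s lie in the bottom row, because the $1$'s of $\psi(\alpha,U)$ form a subset of those of $U$ and we prepend only $1$'s to the bottom and $2$'s to the top; and its content is $\Lambda=\Lambda_{n,\lambda,2}$. Hence $\ch(S)=\Lambda_2-\sh(S)_2$. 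Moreover the top row of $S$ is longer than that of $\psi(\alpha,U)$ by exactly the number of prepended $2$'s, which is $\Lambda_2-\varphi(\alpha)_2$ since $\psi(\alpha,U)$ has content $\varphi(\alpha)$; therefore $\sh(S)_2=\sh(U)_2+\Lambda_2-\varphi(\alpha)_2$, and so $\ch(S)=\varphi(\alpha)_2-\sh(U)_2$.

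Subtracting the two expressions gives $\ch(S)-\ch(U)=\varphi(\alpha)_2-\sort(\alpha)_2$. By the very definition of $\varphi$, the composition $\varphi(\alpha)$ is obtained from $\sort(\alpha)$ by moving exactly $\mathbf{n}(\alpha/\lambda)+\coinv(\alpha)$ boxes from the bottom row to the top row, so $\varphi(\alpha)_2-\sort(\alpha)_2=\mathbf{n}(\alpha/\lambda)+\coinv(\alpha)$. Combining this with $\ch(T)=\ch(S)$ from the unbumping step completes the proof. The main obstacle, as flagged, is the verification of the two-row charge formula; all remaining steps are routine unwinding of the definitions of $\psi$, $\Phi$, and $\varphi$.
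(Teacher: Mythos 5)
Your proof is correct and takes essentially the same approach as the paper's. Both hinge on the same key fact---that a two-row SSYT with all $1$'s in the bottom row has charge equal to the number of $2$'s in its bottom row (your $\mu_2-\sh(V)_2$)---combined with Knuth-invariance of charge under the final unbumping step; the paper simply tracks that count of bottom-row $2$'s directly through $U\mapsto\psi(\alpha,U)\mapsto S$ rather than computing $\ch(U)$ and $\ch(S)$ separately from the closed formula and subtracting.
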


\begin{proof}
  Note that $\ch(U)$ is the number of $2$'s on the bottom row of $U$.  Therefore, the charge of the tableau $S$ formed from $U$ in Definition \ref{def:Phi} is equal to $$\ch(S)=\ch(U)+\mathbf{n}(\alpha/\lambda)+\coinv(\alpha)$$ since this is the total number of $2$'s on the bottom row.  When we unbump, the charge of the tableau $T$ union with the battery is the same as $\ch(S)$ since charge is invariant under Knuth equivalence.  Thus $\ch(T)=\ch(S)$ and the conclusion follows.
\end{proof}

\begin{thm}
    The map $\Phi$ is a bijection from $\mathcal{A}(n,\lambda,2)$ to $\mathcal{T}^+(n,\lambda,2)$.
\end{thm}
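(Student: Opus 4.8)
The plan is to show that $\Phi$ is a bijection by exhibiting an explicit two-sided inverse. Since the constituent maps $\varphi$ (on compositions) and $\psi$ (the $1 \to 2$ relabeling of the bottom row) are already understood — $\varphi$ is a bijection by Proposition~\ref{prop:alpha-2-row}, and $\psi$ is a reversible operation once we know the target content — the heart of the argument is to reverse the ``pad-and-unbump'' step of Definition~\ref{def:Phi}. First I would describe the inverse map $\Psi: \mathcal{T}^+(n,\lambda,2) \to \mathcal{A}(n,\lambda,2)$ directly: given $T = (D,B) \in \mathcal{T}^+(n,\lambda,2)$, form $S := D\cdot B$ by RSK-inserting the battery row into the device; since $B$ is a weakly decreasing row of $n-k$ copies drawn from the entries $\{1,2\}$ (indeed the battery has shape $1\times(n-k)$ and content forced to lie in $\{1,2\}$ because $\Lambda_{n,\lambda,2}$ has only two rows), $S$ has content $\Lambda_{n,\lambda,2}$ and, as noted in the proof above, is automatically semistandard with all $1$'s in its bottom row. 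Then strip the leftmost $\lambda_1$ cells of the bottom row and the leftmost $\lambda_2$ cells of the top row (these are forced to be $1$'s and $2$'s respectively) to recover a two-row skew-free shape $\psi(\alpha,U)$ of content $\varphi(\alpha)$; finally undo $\psi$ by converting the appropriate leftmost $2$'s in the bottom row back to $1$'s, and undo $\varphi$ via Proposition~\ref{prop:alpha-2-row} to recover $\alpha$, with $U$ being the resulting tableau.

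The key steps, in order, are: (1) verify that for $T \in \mathcal{T}^+(n,\lambda,2)$, the battery $B$ really does have content in $\{1,2\}$ and is weakly decreasing, so that $S = D\cdot B$ is well-defined and semistandard of content $\Lambda_{n,\lambda,2}$ — this is immediate since the total content of $D\cup B$ is $\Lambda_{n,\lambda,2} = ((n-k+\lambda_1),(n-k+\lambda_2))$, a partition with two rows, and the device $D$, being an honest straight-shape SSYT, already exhausts all available content beyond $\{1,2\}$; (2) check that removing the forced $\lambda_1$ initial $1$'s and $\lambda_2$ initial $2$'s from $S$ gives a tableau whose content is of the form $\varphi(\alpha)$ for a \emph{unique} $\alpha \in \mathrm{Alpha}(n,\lambda,2)$, using surjectivity of $\varphi$ as established in Proposition~\ref{prop:alpha-2-row}; (3) check that converting the rightmost-to-leftmost block of bottom-row $2$'s back to $1$'s so as to reach content $\mathrm{sort}(\alpha)$ is exactly inverse to $\psi$, which holds because $\psi$ only ever acts on the bottom row and preserves the top row, and the number of bottom-row cells is an invariant; and (4) confirm $\Psi \circ \Phi = \mathrm{id}$ and $\Phi \circ \Psi = \mathrm{id}$ by tracking shapes and contents through each stage — here one uses that RSK insertion of a single weakly-decreasing row and its reverse (unbumping a horizontal strip from right to left) are mutually inverse, a standard fact from \cite{Fulton} already invoked in the preceding lemma.

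I expect the main obstacle to be step (2): making precise why, after stripping the forced initial entries of $S$, one lands \emph{exactly} in the image of $\varphi$ rather than merely in $\mathrm{Alpha}(n,\lambda,2)$, and why the preimage $\alpha$ is unique. The subtlety is that $\psi(\alpha,U)$ need not be semistandard (columns may contain two $2$'s, as the remark warns), so one cannot simply read off $\alpha$ from a valid tableau; instead one must argue that the content of $\psi(\alpha,U)$, namely $\varphi(\alpha)$, together with the known bottom-row length $n$, determines $\alpha$ through the explicit case analysis (parity of $d = \beta_2 - \lambda_1$) carried out in the proof of Proposition~\ref{prop:alpha-2-row}. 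Once that bookkeeping is in place — essentially a matter of composing the already-proven bijectivity of $\varphi$ with the obvious reversibility of $\psi$ and of single-row RSK insertion — the claim follows. The charge-preservation statement $\ch(\Phi(\alpha,U)) = \ch(U) + \mathbf{n}(\alpha/\lambda) + \mathrm{coinv}(\alpha)$ was already established in the preceding lemma, so combined with this bijection we obtain the desired identity of generating functions and hence Theorem~\ref{conj:charge} in the case $s=2$.
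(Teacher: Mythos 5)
Your overall strategy---building an explicit two-sided inverse $\Psi$ by RSK-inserting the battery into the device and then undoing the padding, $\psi$, and $\varphi$ in turn---is the same as the paper's, but your description of the stripping step contains a concrete error that breaks the inverse. You propose to remove exactly $\lambda_1$ cells from the bottom row of $S$ and $\lambda_2$ cells from the top row, a total of $k$ cells. Since $|S|=|\Lambda|=2n-k$, the remaining tableau would have $2(n-k)$ cells, whereas $\psi(\alpha,U)$ has $|\sh(U)|=n$ cells; these agree only in the degenerate case $n=2k$. In the paper's running example ($n=11$, $\lambda=(3,1)$, $\alpha=(5,6)$), $S$ has shape $(15,3)$ and the correct inverse removes $7$ cells from the bottom and $0$ from the top (recovering shape $(8,3)=\sh^+(T)$), not $3$ and $1$. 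The correct prescription is that the amounts stripped are dictated by the shape, not by $\lambda$: remove $\sh(S)_1-\sh^+(T)_1$ cells from the bottom row and $\sh(S)_2-\sh^+(T)_2$ from the top, which sum to $n-k$ (the battery size). One then verifies that each of these amounts is at most $n-k$, so the removed cells are forced to be $1$'s (resp.\ $2$'s) because the bottom row of $S$ begins with $\Lambda_1=n-k+\lambda_1\geq n-k$ ones, and that the remaining content $\beta$ satisfies $\beta\supset\lambda$, which is what lets Proposition~\ref{prop:alpha-2-row} produce a unique $\alpha=\varphi^{-1}(\beta)$.

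Two smaller points: you call the battery row weakly decreasing, but as an SSYT row it is weakly increasing (the unbumped letters emerge in weakly decreasing order during right-to-left unbumping, and are then written as an increasing row); and in your final paragraph ``the known bottom-row length $n$'' should be the shape $\sh^+(T)$, since the bottom-row length of $\psi(\alpha,U)$ is $\sh(U)_1$, not $n$. These are all repairable and the surrounding reasoning (invariance of shape/content, bijectivity of $\varphi$, reversibility of $\psi$ and of single-row RSK insertion) matches the paper's, but as written the map $\Psi$ you construct does not invert $\Phi$.
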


\begin{proof}
  We reverse $\Phi$ as follows.  Given a tableau $T\in \mathcal{T}^+(n,\lambda,2)$, insert its battery to form a tableau $S$.  Then remove $1$'s from the bottom row and $2$'s from the top row so that the remaining letters in each row, when left justified, forms a (not necessarily standard) tableau $U'$ of shape $\sh^+(T)$.  
  Now, if $\beta$ is the content of $U'$, we change $2$'s to $1$'s in the bottom row to form a tableau $U$ of content $\alpha=\varphi^{-1}(\beta)$.  The pair $(\alpha,U)$ is our output.
  
  Once we show that this process is well defined, it is clear that it reverses each step of $\Phi$.  The insertion process to form $S$ is known to be well defined.  For the next step, to show there are enough $1$'s and $2$'s to remove from $S$ to form a tableau $U'$ of shape $\sh^+(T)$, certainly the top row is long enough since it is at least as long as the top row of $T$.  For the bottom row, since the battery that we inserted had length $n-k$, we have to remove at most $n-k$ squares containing $1$ from $S$, and since $\Lambda=(n-k+\lambda_1,n-k+\lambda_2)$, there are at least $n-k$ such squares.
  
  For the last step, by Proposition \ref{prop:alpha-2-row} it suffices to show that $\beta\in \mathrm{Alpha}(n,\lambda,2)$, that is, that the composition $\beta$ contains $\lambda$.  Since there are $n-k+\lambda_1$ squares labeled $1$ in $S$ and we remove at most $n-k$ of them to form $U'$, we have that $\beta_1$, the number of $1$'s in $U'$, is at least $\lambda_1$.  Similarly $\beta_2\ge \lambda_2$, and we are done.
\end{proof}

\section{The \texorpdfstring{$s_{(n)}$}{} coefficient in the \texorpdfstring{$R_{n,k}$}{} case}\label{sec:Rnk}

We now consider the setting in which $\lambda=(1^k)$ and $s=k$, so that $R_{n,\la,s}=R_{n,k}$ and $\widetilde{H}_{n,\lambda,s} = \grFrob(R_{n,k})$, and give a direct combinatorial proof of Theorem \ref{conj:main} for the coefficient of $s_{(n)}$ in this setting.  We recall the positive Schur expansion of $\grFrob(R_{n,k})$ given in \cite{7authors}.  An \textbf{ordered set partition}, or OSP, of $n$ is a partition of $\{1,2,\ldots,n\}$ into a disjoint union of subsets called \textbf{blocks}, along with an ordering of the blocks from left to right.  For instance, $(45|367|28|19)$ denotes an OSP of $9$.  

A \textbf{descent} of a permutation $\pi$ is an index $d$ such that $\pi_d>\pi_{d+1}$, and the \textbf{major index} of $\pi$ is the sum of its descents.  The \textbf{minimaj} of an OSP, first introduced in the context of the Delta conjecture in \cite{HRW}, is the major index of the \textbf{minimaj word} formed by ordering each block's entries from least to greatest and then reading the letters in the OSP from left to right.  For instance, the associated word to $(45|367|28|19)$ is $453672819$, and it has descents in positions $2,5,7$, so the minimaj is $2+5+7=14$.

The \textbf{reading word} $\mathrm{rw}(P)$ of an OSP $P$ (different from its minimaj word) is formed by reading the smallest entry of each block from right to left, and then the remaining entries from left to right.  For instance, the reading word of $(45|367|28|19)$ is $123456789$.

It was shown in \cite{HRS} (using the work of \cite{HRW}) that there is a more general set of \textit{ordered multiset partitions} into $k$ blocks, $\mathcal{OP}_{n,k}$, and a minimaj statistic on them such that $$\omega\circ\revq( \grFrob R_{n,k})=\sum_{\pi\in \mathcal{OP}_{n,k}}q^{\mathrm{minimaj}(\pi)}x^{\mathrm{wt}(\pi)}$$ where $\mathrm{wt}(\pi)$ is the tuple whose $i$-th term is the number of $i$'s in $\pi$.  In \cite{7authors}, a crystal structure is given on ordered multiset partitions that is compatible with the minimaj statistic, thereby grouping the terms of the above monomial expansion into a Schur expansion:
$$\revq( \grFrob R_{n,k})=\omega\circ \sum_{\substack{\pi\in \mathcal{OP}_{n,k}\\ \widetilde{e}_i(\pi)=0 \,\,\forall i}}q^{\mathrm{minimaj}(\pi)}s_{\mathrm{wt}(\pi)}=\sum_{\substack{\pi\in \mathcal{OP}_{n,k}\\ \widetilde{e}_i(\pi)=0 \,\,\forall i}}q^{\mathrm{minimaj}(\pi)}s_{\mathrm{wt}(\pi)^\ast},$$ where $\widetilde{e}_i$ are the raising operators of the crystal, which we define below. 

In particular, the coefficient of $s_{(n)}$ in the above expansion (taking into account the conjugation via $\omega$) is equal to $$    \sum_{\substack{P\in \mathcal{OP}_{n,k}, \mathrm{wt}(P)=(1^n) \\ \widetilde{e}_i(P)=0 \,\,\forall i}} q^{\mathrm{minimaj}(P)}=\sum_{\substack{P\in \mathrm{OSP}(n,k) \\ \widetilde{e}_i(P)=0 \,\,\forall i}} q^{\mathrm{minimaj}(P)}
$$ where $\mathrm{OSP}(n,k)$ is the set of ordered set partitions with entries $1,2,\ldots,n$ and $k$ blocks.  

The crystal raising operators $\widetilde{e}_i$ were defined in \cite{7authors} via the reading word described above.  In particular, $\widetilde{e}_i(P)=0$ if and only if, in the reading word, the number of $i$'s is always greater than or equal to the number of $i+1$'s as we read the word from left to right.  Thus if $P$ has content $(1^n)$, we have $\widetilde{e}_i(P)=0$ for all $i$ if and only if the reading word of $P$ is $123\cdots n$.  Thus the coefficient of $s_{(n)}$ in $\revq(\grFrob(R_{n,k}))$ is
\begin{equation}\label{eq:minmaj}
    \sum_{\substack{P\in \mathrm{OSP}(n,k) \\ \mathrm{rw}(P)=123\cdots n}} q^{\mathrm{minimaj}(P)}.
\end{equation}
On the other hand, the coefficient of $s_{(n)}$ in the charge formula of Theorem \ref{conj:charge} is 
\begin{equation}\label{eq:ch-formula}
\sum_{\substack{T\in \mathcal{T}^+(n,(1^k),k) \\ \sh^+(T)=(n)}} q^{\ch(T)}.
\end{equation}
To prove that \eqref{eq:minmaj} and \eqref{eq:ch-formula} are equal via combinatorial methods, we first prove a lemma about charge, and then we define a bijection $f$ from the set of tableaux $T$ appearing in the sum \eqref{eq:ch-formula} to the OSPs in \eqref{eq:minmaj} as follows.

\begin{lemma}\label{lem:chRnk}
    Given $T\in\mathcal{T}^+(n,(1^k),k)$ such that $\sh^+(T) = (n)$, the charge labels of the battery of $T$ are always either $0$ or $1$, with the $1$ labels being precisely on the entries of the battery that are larger than their row index. Furthermore, all charge labels in the device are $0$ except in the final charge word which is $123\cdots k$ in order.
\end{lemma}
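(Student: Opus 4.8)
The plan is to analyze directly the structure of a battery-powered tableau $T = (D, B) \in \mathcal{T}^+(n,(1^k),k)$ whose device has the single-row shape $\sh^+(T) = (n)$. First I would pin down exactly what $D$ and $B$ look like. Since $\lambda = (1^k)$ and $s = k$, the relevant partition is $\Lambda_{n,\la,s} = ((n-k+1)^{k-1}, n-k+1) + \text{corrections}$; more precisely $\Lambda = ((n-k+1)^k)$ (the $k\times(n-k+1)$ rectangle, since $(n-k)^k + (1^k) = (n-k+1)^k$ when $k \le s = k$). The battery $B$ is rectangular of shape $(k-1)\times(n-k)$, and the device $D$ has shape $(n)$, so $D$ is a single row of length $n$ with weakly increasing entries, and the total content of $D \cup B$ is $((n-k+1)^k)$. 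Because $D$ is a single row, its entries are a weakly increasing word using each letter $i \in \{1,\dots,k\}$ some number of times, and $B$ is a semistandard rectangular tableau accounting for the rest. I would then observe that since each column of $B$ strictly increases and $B$ has $k-1$ rows, each of the $n-k$ columns of $B$ contains $k-1$ distinct letters from $\{1,\dots,k\}$, i.e. omits exactly one letter.

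Next, I would compute the charge labels of the concatenated reading word $w = \mathrm{rw}(D)\,\mathrm{rw}(B)$, recalling that charge subwords are found by scanning \emph{left to right} cyclically for $1, 2, 3, \dots, k$, incrementing the label when the next chosen letter lies to the \emph{right} of the previous one. The key structural point is that $\mathrm{rw}(D)$ comes first and is weakly increasing, and $\mathrm{rw}(B)$ (reading $B$'s rows top to bottom) follows. I would argue that the first $k$ charge subwords each pick up, in order $1, 2, \dots, k$, and that the ``left" entries — those sitting in row $i$ equal to $i$ (equivalently, for the device, entries chosen from the increasing run, and for the battery, the entries not exceeding their row index) — contribute charge label $0$, while an entry that is strictly larger than its row index forces the subword to wrap, contributing label $1$. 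This is the direct analogue of the left-entry/right-entry analysis already carried out in the proof of Lemma \ref{lem:K-q-power}, and I would model the argument on that proof: the ``staircase" separating left from right entries in each column of the battery is a down-and-right path, so each battery column contributes at most one increment, located exactly at its unique entry exceeding its row index. For the device row, since it is weakly increasing, scanning it left to right within a single charge subword never forces a wrap, so all device charge labels are $0$ except in the very last (topmost) charge subword — the one consuming the largest remaining copies of each letter — which reads $1 2 3 \cdots k$ across the device in order and hence has labels $0, 0, \dots$ but wait: here I must be careful, and I would verify that this final subword does accumulate labels $1, 2, \dots, k$ exactly because consecutive chosen letters appear left-to-right in increasing positions within the device row. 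That is the assertion ``all charge labels in the device are $0$ except in the final charge word which is $123\cdots k$ in order.''

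The main obstacle I anticipate is carefully bookkeeping which letters of $D$ and $B$ get consumed by which charge subword — in particular, proving that the device's increasing run is ``used up'' by the subwords in a controlled way so that exactly one subword per letter lands on $D$'s final occurrences while the others land entirely on $B$, and simultaneously that within each subword the battery entries are traversed in column order so the down-and-right-path argument applies. To handle this cleanly I would set up notation for the column-fillings of $B$ (column $j$ omits letter $c_j$, with $c_1 \le c_2 \le \cdots \le c_{n-k}$ forced by semistandardness) and track the charge-labeling greedily, showing by induction on the subword index $r = 1, \dots, k$ that the $r$-th charge subword consists of: the left entries in rows $1, \dots, r-1$ of certain battery columns, followed by the device's copy of the appropriate letter, followed by right entries in rows $r, \dots, k-1$ — exactly as in Lemma \ref{lem:K-q-power}. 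Once this inductive description is established, the two claims of the lemma — that battery charge labels are $0$ or $1$ with $1$ precisely on entries exceeding their row index, and that device labels vanish outside the final $1 2 \cdots k$ subword — both fall out immediately, and this also sets up the subsequent definition of the bijection $f$ to ordered set partitions.
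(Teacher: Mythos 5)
Your proposal correctly identifies the key structural facts (the battery columns each omit exactly one letter, the analogy with the left/right-entry analysis in Lemma~\ref{lem:K-q-power}) and is headed in roughly the same direction as the paper's proof, but the inductive claim you propose is not correct as stated and you are missing the organizing device that makes the paper's argument work.

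The paper inducts on $n-k$: it identifies the \emph{first} charge subword explicitly (via the minimal $i$ such that $i$ does not appear in row $i$ of the battery), checks the claimed labels on it, then \emph{deletes} that subword and left-justifies to get a new battery-powered tableau $T'\in\mathcal{T}^+(n-1,(1^k),k)$ with $\sh^+(T')=(n-1)$, and invokes the inductive hypothesis on $T'$. You instead propose to induct on the subword index $r$ and assert that ``the $r$-th charge subword consists of: the left entries in rows $1,\dots,r-1$ of certain battery columns, followed by the device's copy of the appropriate letter, followed by right entries in rows $r,\dots,k-1$.'' This is false: the position at which the subword jumps from the battery into the device is governed by which row of the (progressively shrinking) battery first fails to contain its own index, not by the subword index $r$, and that breakpoint does not march monotonically with $r$. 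For example, with $n=9$, $\la=(1^4)$, $s=4$ and the tableau of Figure~\ref{fig:Rnk}, the first subword has its device entry at letter $4$ (breakpoint after row $3$), the second at letter $3$, the third and fourth both at letter $2$, the fifth at letter $1$, and the sixth lies entirely in the device. There are also $n-k+1$ subwords, not $k$, so ``$r=1,\dots,k$'' is off. A direct description of the $r$-th subword without peeling is substantially harder than what you sketch; the peel-and-recurse step is the missing idea that replaces it.

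Two smaller inaccuracies worth flagging: semistandardness of the battery forces the omitted column-letters $c_j$ to satisfy $c_1\ge c_2\ge\cdots\ge c_{n-k}$ (you wrote the inequality the other way), and the labels accumulated by the final all-device subword $12\cdots k$ are $0,1,\dots,k-1$, not $1,2,\dots,k$. You should also match the paper's convention for forming charge subwords: both charge and cocharge subwords are selected by the same right-to-left cyclic scan, with only the increment rule differing; switching to a left-to-right scan changes which entries land in which subword and would change the individual labels (which are exactly what this lemma is about), even though the total charge is unaffected.
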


\begin{proof}
We proceed by induction on $n-k$. In the base case when $n-k=0$, the battery is empty, so $T$ has content $\Lambda = (1^n)$ in this case, so there is only one charge word which consists of the entire row labeled $12\cdots n$ in order (where $n=k$), so the base case holds.

Letting $n-k>0$ and $T\in \mathcal{T}^+(n,(1^k),k)$ such that $\sh^+(T) = (n)$, let $i$ be minimal such that $i$ does not appear in row $i$ of the battery, or $i=k$ if such an $i$ does not exist. Then since $\sh^+(T) = (n)$, the first charge word of $T$ consists of the last $j$ entry of row $j$ of the battery for each $j < i$, together with the right-most $i$ in the device, and the right-most $j$ of the battery in row $j-1$ for $i<j\leq k$. Thus, the charge labels for $j\leq i$ are $0$ and for $j>i$ they are all $1$. 

Deleting $i$ from the device and left-justifying, and deleting the other entries of the first charge word from the battery and left justifying each row of the battery, we get a battery-powered tableau $T'\in \mathcal{T}^+(n-1,(1^k),k)$ with $\sh^+(T') = (n-1)$. The charge labels for the entries of $T'$ are the same as the charge labels of the corresponding cells of $T$. By our inductive hypothesis, we are done.
\end{proof}

\begin{figure}
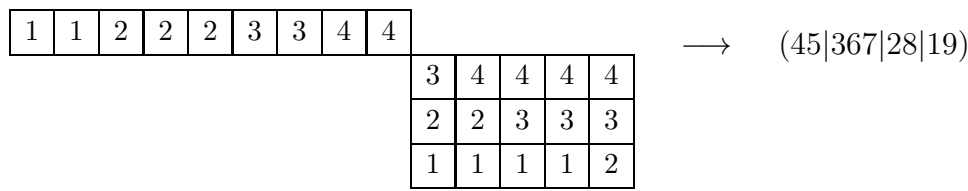

    \centering
$${\small\begin{ytableau}
      1 & 1 & 2 & 2 & 2 & 3 & 3 & 4 & 4 \\
      \none & \none & \none & \none & \none & \none & \none & \none & \none & 3 & 4 & 4 & 4 & 4  \\
      \none & \none & \none & \none & \none & \none & \none & \none & \none & 2 & 2 & 3 & 3 & 3  \\
      \none & \none & \none & \none & \none & \none & \none & \none & \none & 1 & 1 & 1 & 1 & 2  \\
    \end{ytableau}}\hspace{0.5cm}
    \longrightarrow \hspace{0.5cm}
    (45|367|28|19)
$$
    \caption{A battery-powered tableau $T$ of shape $(9)$ for $\la=(1^4)$ and $s=4$, and the corresponding ordered set partition $P$.  We have $\ch(T)=\mathrm{minimaj}(P)=14$.}
    \label{fig:Rnk}
\end{figure}

\begin{definition}\label{def:f}
 Given $T\in \mathcal{T}^+(n,(1^k),k)$ with shape $(n)$, define $f(T)$ to be the ordered set partition constructed as follows.  Let $f(T)$ have exactly $k$ blocks $B_1,\ldots,B_k$ in that order, which initially contain $k,k-1,k-2,\ldots,1$ respectively.   Then let $m_i$ be the number of $i$'s in the device of $T$, and place the numbers $k+1,k+2,\ldots,n$ into the blocks from left to right in the unique way so that each block $B_i$ has size $m_i$ for all $i$.  The resulting OSP is $f(T)$.
\end{definition}

An example of $f(T)$ is depicted in Figure \ref{fig:Rnk}.

\begin{prop}
    The assignment $T\mapsto f(T)$ is a bijection from the set of all tableaux $T\in \mathcal{T}^+(n,(1^k),k)$ such that $\sh^+(T) = (n)$ to the set of $P\in \mathrm{OSP}(n)$ such that $\mathrm{rw}(P) = 123\cdots n$. The map $f$ is weight preserving, meaning that $\ch(T) = \mathrm{minimaj}(f(T))$.
\end{prop}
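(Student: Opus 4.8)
The plan is to realize both sides of the claimed bijection as indexed by the same combinatorial data. Write $\Lambda := \Lambda_{n,(1^k),k} = ((n-k+1)^k)$. I claim that the set $\{T\in\mathcal{T}^+(n,(1^k),k) : \sh^+(T)=(n)\}$ and the set $\{P\in\mathrm{OSP}(n,k):\mathrm{rw}(P)=123\cdots n\}$ are each in natural bijection with the set of compositions $\mathbf{m}=(m_1,\dots,m_k)\vDash n$ with all parts $m_i\geq 1$, and that $f$ carries one indexing to the other; the weight statement will then reduce to computing both statistics as functions of $\mathbf{m}$.

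For the tableaux: if $\sh^+(T)=(n)$, then the device $D$ is a single row, hence is forced to equal the word $1^{m_1}2^{m_2}\cdots k^{m_k}$, where $m_i$ is the number of $i$'s in $D$. Since $T$ has total content $\Lambda$, the battery $B$ is then forced to have rectangular shape $(k-1)\times(n-k)$ and content with $n-k+1-m_i$ copies of $i$ for each $i$; by the uniqueness argument in the proof of Lemma~\ref{lem:K-q-power} (a rectangular tableau with $k-1$ rows whose content is supported on $\{1,\dots,k\}$ is unique), $B$ is determined, so $T\leftrightarrow\mathbf{m}$, and the constraint $m_i\geq 1$ is exactly nonnegativity of the battery content. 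In this description the column of $B$ that \emph{omits} the value $v$ occurs exactly $m_v-1$ times, and its entries equal the row index in rows $1,\dots,v-1$ and exceed the row index in rows $v,\dots,k-1$.

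For the OSPs: reading the first $k$ letters of $\mathrm{rw}(P)$ (the block minima, from right to left) against $1,2,\dots,k$ forces $\min(B_i)=k+1-i$, so that $\{1,\dots,k\}$ are precisely the block minima. The remaining letters of $\mathrm{rw}(P)$ list the non-minimal entries of $B_1,B_2,\dots$ in block order, and since these must read $k+1,k+2,\dots,n$, the non-minimal entries of $B_i$ form a run of $|B_i|-1$ consecutive integers immediately after those of $B_{i-1}$. Thus $P$ is determined by its block-size composition $(|B_1|,\dots,|B_k|)$, which may be any composition of $n$ into $k$ positive parts. Comparing with Definition~\ref{def:f}, the OSP $f(T)$ has $|B_i|=m_i$, minimum $k+1-i$, and consecutive fill, so $f$ is exactly this composition-preserving identification; in particular $f$ is a bijection.

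It remains to match the statistics. By Lemma~\ref{lem:chRnk}, the device of $T$ contributes charge $0+1+\cdots+(k-1)=\binom{k}{2}$ (its final charge word is $12\cdots k$ in order), while the battery contributes the number of its entries exceeding their row index; by the column analysis above this battery contribution is $\sum_v (m_v-1)(k-v)$, so $\ch(T)=\binom{k}{2}+\sum_v(m_v-1)(k-v)=\sum_{v=1}^{k}(k-v)\,m_v$ since $\sum_v(k-v)=\binom{k}{2}$. On the other side, the minimaj word of $f(T)$ is the concatenation of $B_1,\dots,B_k$ each sorted increasingly; there is no descent within a sorted block (its minimum $k+1-i$ precedes a strictly larger run), while there is a descent at every block boundary, so $\mathrm{minimaj}(f(T))=\sum_{i=1}^{k-1}(m_1+\cdots+m_i)=\sum_{v=1}^{k}(k-v)\,m_v$ as well. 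Hence $\ch(T)=\mathrm{minimaj}(f(T))$. I expect this last step to be the main obstacle: it depends on reading off the charge labels of the canonical battery correctly from Lemma~\ref{lem:chRnk} and on verifying that \emph{every} block boundary of the minimaj word is genuinely a descent, which is where the bookkeeping is most delicate.
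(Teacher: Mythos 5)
Your proof is correct and takes essentially the same approach as the paper: both rely on Lemma~\ref{lem:chRnk} to read off the charge of $T$ (device contributing $\binom{k}{2}$, battery contributing one per entry exceeding its row index), and both match this against the minimaj of $f(T)$ via the block-size composition $\mathbf{m}$. Your version is a bit more explicit than the paper's, writing out both statistics in closed form as $\sum_v(k-v)m_v$ and carefully checking that every block boundary of the minimaj word is a genuine descent, whereas the paper argues incrementally (``adding an extra element to $B_i$ increases the minimaj by $k-i$''), but the content is the same.
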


\begin{proof}
To show $f$ is well defined, observe that $\Lambda_{n,(1^k),k}=((n-k+1)^k)$, and so $T$ has exactly $n-k+1$ copies of each letter from $1$ through $k$. Since the battery of $T$ has $n-k$ columns, then there must be at least one of each $i\leq n$ in the device of $T$.  In the notation of Definition~\ref{def:f}, we thus have $m_i\ge 1$ for all $i$, so $f(T)$ is a well-defined OSP.  By its construction, the reading word of $f(T)$ is $123\cdots n$, and the process is reversible since there is a unique way to fill the one-row device and the battery for any sequence of block sizes $m_i$.  Thus $f$ is a bijection. 

We now prove that $f$ is weight-preserving, sending $\ch$ to $\mathrm{minimaj}$.  Indeed, by Lemma~\ref{lem:chRnk}, the final charge subword, which is $123\cdots k$ in order, has charge $\binom{k}{2}$.  This is the minimaj value formed by placing $k,k-1,\ldots,1$ in the blocks from left to right. For each $i$ in the device of $T$ that is not in the final charge subword, the charge labels of the $i+1,\dots, k$ in the charge subword of $i$ are all $1$, so $i$ contributes $k-i$ to charge. In terms of minimaj, adding an extra element to $B_i$ increases the minimaj corresponding to blocks $B_{i+1},\dots, B_k$, and thus results in an increase of $k-i$. Thus, placing the remaining letters in the blocks increases the minimaj by precisely the amount of charge stored in the battery.
\end{proof}

\section{Skewing formulas for the Delta Conjecture at low \texorpdfstring{$t$}{} degrees}\label{sec:conjs}

It is natural to ask whether our skewing formula for the Delta Conjecture at $t=0$ extends to the full Delta Conjecture symmetric function. In this section, we give several conjectures of such expansions below. Each formula may be expanded in order to obtain a positive Schur expansion.

\begin{example} In the case $n=4$, $k=3$, the skewing formula generalizes to the following skewing formula for the full Delta Conjecture symmetric function.
\begin{align*}
\omega\Delta'_{e_2} e_4 &= s_{(1,1)}^\perp \bigg(H_{(2,2,2)}(x;q) + (t(1+q)+t^2)H_{(3,2,1)}(x;q)\\
&\,\,\,\,\,\,+ (t^2(1+q) + 2t^3 
+ t^4)H_{(4,2)}(x;q) + (t^3 + t^4 + t^5)H_{(5,1)}(x;q)\bigg),
\end{align*}
which in turn gives a Schur-positive expansion for $\Delta'_{e_2}e_4$ after expanding each Hall-Littlewood polynomial in terms of charge.

Similarly, for $n=5$ and $k=3$, we have
\begin{align*}
    \omega\Delta'_{e_{2}}e_5 &= s_{(2,2)}^\perp\bigg( H_{(3,3,3)} + t(1+q)H_{(4,3,2)} + (t^2(1+q) + t^3 + t^4)H_{(5,3,1)} + t^3H_{(4,4,1)}\\
    &+ (t^3+t^4+t^5)H_{(5,4)} + t^3H_{(6,2,1)} + (t^3+2t^4+2t^5+t^6)H_{(6,3)}  \\
    &+(t^4+2t^5+t^6+t^7)H_{(7,2)}\bigg).
\end{align*}
Alternatively, the terms $t^3(H_{(6,2,1)} + H_{6,3})$ may be replaced with $t^3((q+2)H_{(6,3)} + H_{(7,2)})$.
\end{example}

\begin{remark}
    In general, $\omega\Delta'_{e_{k-1}} e_n$ does not have an expansion as a single $s_\lambda^\perp$ applied to a positive sum of Hall-Littlewood polynomials - for instance, the $t^4$ coefficient in $\omega \Delta'_{e_{3}}e_5$ is not Hall-Littlewood positive (and it is known that $s_\la^\perp$ applied to a Hall-Littlewood polynomial is Hall-Littlewood positive).  That being said, in the conjectures below we find some formulas of this form for the coefficients of low-degree powers of $t$. 
\end{remark}

Let $[n]_q = 1+q + \cdots + q^{n-1}$ and $\binom{n}{m}_q = \frac{[n]_q}{[m]_q[n-m]_q}$ be the usual $q$-analogues.

\begin{conj}\label{conj:t-coeff}
    The coefficient of $t^1$ in $\omega\Delta'_{e_{k-1}}e_n$ (as a polynomial in $t$ with coefficients in symmetric functions over $\bQ[q]$) is
    \[
    [k-1]_q \cdot s_{((n-k)^{k-1})}^\perp H_{(n-k+2,(n-k+1)^{k-2},n-k)}(x;q)
    \]
    where $(n-k+2,(n-k+1)^{k-2},n-k)$ is shorthand for the partition $(n-k+2,n-k+1,n-k+1,\ldots n-k+1,n-k)$ with $k-2$ copies of the part $n-k+1$.
\end{conj}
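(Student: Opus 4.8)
\textbf{Proof proposal for Conjecture~\ref{conj:t-coeff}.}

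The plan is to emulate the strategy that proved Theorem~\ref{thm:skew-formula}, but now targeting the $t^1$-graded piece of the Delta Conjecture symmetric function rather than the $t^0$ piece. Recall that the $t=0$ result came from identifying $R_{n,k}$ with the cohomology of a $\Delta$-Springer variety $Y_{n,(1^k),k}$, realizing this variety as a Borho--MacPherson generalized Springer fiber $\sP_x^y$ for a suitable parabolic $P$ and nilpotent datum $t$, verifying rational smoothness of $\overline{\cO}_y$ along $\sP_x^y$ via Lemma~\ref{lem:K-q-power}, and then applying Theorem~\ref{thm:BM-iso} together with Lemma~\ref{lem:Skewing}. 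For the $t^1$ coefficient, the first step would be to pin down the right Borho--MacPherson datum: one wants a parabolic $P$ (or a family of them) and a nilpotent $t\in\cN_L$ so that the associated $W^P$-module $H^*(\sB_x)^{\rho(t,1)}$ has graded Frobenius character equal to $[k-1]_q$ times the Hall--Littlewood polynomial $H_{(n-k+2,(n-k+1)^{k-2},n-k)}(x;q)$ (after the appropriate normalization and $\revq$), where $x$ now has Jordan type $\Lambda' := (n-k+2,(n-k+1)^{k-2},n-k)$. The shape $\Lambda'$ is obtained from $\Lambda = ((n-k+1)^k)$ by moving one box from the bottom row to the top, which strongly suggests that the relevant geometry is a one-box modification of the $t=0$ setup: the $[k-1]_q$ factor is exactly $\dim_q$ of a column of length $k-1$, i.e. the graded dimension of $H^*$ of a $\mathbb{P}^{k-2}$ (or equivalently a cocharge generating function over a single column-strip), which hints that the $t^1$ piece should arise as the cohomology of a $\mathbb{P}^{k-2}$-bundle sitting over a $\Delta$-Springer-type variety for $\Lambda'$.

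Concretely, the key steps I would carry out are: (1) Recall or re-derive an algebraic/geometric model for the $t^1$-coefficient of $\omega\Delta'_{e_{k-1}}e_n$ — the natural candidate is a suitable graded piece of a module closely related to $R_{n,k}$, or a direct summand of the cohomology of a variety interpolating between $Y_{n,(1^k),k}$ and a Springer fiber for $\Lambda'$; one should check against the $n=4,k=3$ and $n=5,k=3$ examples in the excerpt that the proposed model has the right bigraded Frobenius series in low $t$-degree. (2) Exhibit this model as a generalized Springer fiber $\sP_{x'}^{y'}$ for $x'$ of Jordan type $\Lambda'$ and an explicit parabolic/nilpotent pair, mimicking Lemmas~\ref{lem:PisY} and~\ref{lem:JT-containment}, using the partial-flag description~\eqref{eq:P-as-flags}. (3) Establish the rational smoothness hypothesis of Theorem~\ref{thm:BM-iso} for $\overline{\cO}_{y'}$ along $\sP_{x'}^{y'}$; here one again needs a $q$-Kostka evaluation analogous to Lemma~\ref{lem:K-q-power}, namely that $\widetilde{K}_{((n-k)^{k-1}),\mu}(q)$ is the expected single monomial (up to the $[k-1]_q$ correction) for all $\mu$ arising as Jordan types of the relevant induced endomorphisms. (4) Apply Theorem~\ref{thm:BM-iso} and Lemma~\ref{lem:Skewing} to convert the isotypic-component statement into the claimed $s_{((n-k)^{k-1})}^\perp$ skewing formula, tracking the $q$-degree shift by $\mathbf{n}$-statistics as in~\eqref{eq:final-eqn}, and finally apply $\revq$ and $\omega$ to land on the stated form.

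An alternative, possibly cleaner route avoids new geometry entirely: combine the known Hall--Littlewood expansion of $\grFrob(R_{n,k})$ (or rather of the full $\Delta'_{e_{k-1}}e_n$, from e.g. the $\HHL$/compositional side or from~\eqref{eq:HL-expansion-comp} in the $t$-graded refinement if such a refinement is available) with the observation that $s_{((n-k)^{k-1})}^\perp$ is injective on the span of Hall--Littlewood polynomials indexed by partitions containing $((n-k)^{k-1})$ with at most $k$ rows. Under that injectivity, the conjecture reduces to a purely symmetric-function identity: show that $s_{((n-k)^{k-1})}^\perp\big([k-1]_q H_{\Lambda'}(x;q)\big)$ equals the $t^1$-coefficient of $\omega\Delta'_{e_{k-1}}e_n$, which one could attack via the plethystic/Macdonald-operator formula for $\Delta'_{e_{k-1}}$ applied to $e_n$ and extracting the $t^1$ term, or via the parking-function/minimaj combinatorics of \cite{HRW} restricted to the statistic value $\mathrm{dinv}=1$ (or $\mathrm{bounce}=1$, depending on side).

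The main obstacle I anticipate is step (3): proving the precise rational-smoothness statement, i.e. identifying exactly which orbit closures $\overline{\cO}_t$ (or which loci $Z$ to remove) make the Borho--MacPherson isotypic decomposition apply cleanly, and showing the corresponding $q$-Kostka polynomials degenerate to the single-monomial-times-$[k-1]_q$ form. Unlike the $t=0$ case, where the battery geometry forced the clean rectangular Jordan type and Lemma~\ref{lem:K-q-power} did the job, here the target shape $\Lambda'$ is not rectangular and the relevant $\mu$'s need not satisfy the containment $\mu\subseteq((n-k)^s)$ hypothesis of Lemma~\ref{lem:K-q-power}, so a genuinely new computation of $\widetilde{K}_{\nu,\mu}(q)$ in this family is required. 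If the geometric route stalls there, the symmetric-function route's hard part instead becomes proving the injectivity of $s_{((n-k)^{k-1})}^\perp$ on the relevant span and then matching the combinatorics of the $t^1$ parking-function terms with the charge statistic on battery-powered tableaux over $\Lambda'$ — a finite but delicate bijective argument in the spirit of Section~\ref{sec:s=2}.
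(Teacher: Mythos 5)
This statement is labeled a \emph{conjecture} in the paper, and the authors do not prove it: they only report having verified it computationally for all $n\le 8$ and $k\le n$. So there is no ``paper's own proof'' to compare against. Your submission is accordingly not a proof either; it is a research plan, and you explicitly flag its main obstacles yourself. Let me name the concrete gaps so the scope of what remains is clear.

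First, the geometric route stalls at your step (1), not (3). The entire $t=0$ machinery in Section~\ref{sec:MainThm} rests on the fact that $\grFrob(R_{n,k})$ \emph{is} the cohomology of a specific variety $Y_{n,(1^k),k}$, which in turn is a Borho--MacPherson fiber $\sP_x^y$. No analogous variety or graded $S_n$-module is known whose graded Frobenius character is the $t^1$-coefficient of $\omega\Delta'_{e_{k-1}}e_n$; the Delta Conjecture symmetric function for $t>0$ is not realized as a single cohomology ring in the present literature. Until such a model exists, there is nothing to which Theorem~\ref{thm:BM-iso} can be applied, so steps (2)--(4) have no input. Your heuristic that the $[k-1]_q$ factor ``looks like $H^*(\mathbb{P}^{k-2})$'' and the one-box move from $\Lambda$ to $\Lambda'$ is suggestive, but it does not by itself supply the required variety, the $W^P$-action, or the rational-smoothness hypothesis.

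Second, the alternative symmetric-function route has an unjustified and likely false premise: you assume $s_{((n-k)^{k-1})}^\perp$ is injective on the span of Hall--Littlewood polynomials $H_\mu$ with $\mu\supseteq((n-k)^{k-1})$ and $\ell(\mu)\le k$. Skewing operators $s_\rho^\perp$ generally have large kernels, and even on the restricted span you would need to rule out linear relations among the images $s_\rho^\perp H_\mu$; nothing in the paper (or in your sketch) establishes this. Moreover, even granting injectivity, the reduction you describe leaves the actual work --- extracting the $t^1$-coefficient from the parking-function/ordered-multiset-partition side and matching it to charge on battery-powered tableaux over $\Lambda'$ --- entirely open, which is precisely the content of the conjecture. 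You correctly identify that Lemma~\ref{lem:K-q-power} cannot be reused because $\Lambda'$ is not rectangular and the containment $\mu\subseteq((n-k)^s)$ fails, but you do not supply a replacement computation.

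In short: the paper does not prove this statement, and neither do you. What you have written is a reasonable outline of \emph{where one might look}, with honest acknowledgment of the two hard points; but both routes you propose bottom out in open problems (a geometric/module model for the $t^1$-graded piece on one side, an unproved injectivity plus an unproved combinatorial identity on the other) rather than in arguments.
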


\begin{conj}\label{conj:t-squared}
    The $t^2$ coefficient of $\omega\Delta'_{e_{k-1}}e_n$ is
    \begin{gather*}
    s_{((n-k)^{k-1})}^\perp \bigg( [k-2]_q H_{(n-k+2,(n-k+1)^{k-2},n-k)}(x;q) 
    + \binom{k-2}{2}_q H_{((n-k+2)^2,(n-k+1)^{k-4},(n-k)^2)}(x;q)\\
    + [k-1]_q H_{(n-k+3,(n-k+1)^{k-2},n-k-1)}(x;q)\bigg).
    \end{gather*}
\end{conj}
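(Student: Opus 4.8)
The plan is to compute the expansion of $\omega\Delta'_{e_{k-1}}e_n$ in powers of $t$ directly from the plethystic definition of $\Delta'$, and to match the coefficients of $t^1$ and $t^2$ against the claimed skewed Hall--Littlewood polynomials. Writing $B_\mu(q,t)=\sum_{c\in\mu}q^{a'(c)}t^{l'(c)}$ and using the eigenoperator identity $\Delta'_{e_{k-1}}\widetilde{H}_\mu(x;q,t)=e_{k-1}[B_\mu-1]\,\widetilde{H}_\mu(x;q,t)$ together with the standard plethystic expansion of $e_n$ into modified Macdonald polynomials, one obtains
\[
\Delta'_{e_{k-1}}e_n \;=\; \sum_{\mu\vdash n} e_{k-1}[B_\mu-1]\,\widetilde{H}_\mu(x;q,t)\,\frac{(1-q)(1-t)\,\Pi_\mu\,B_\mu}{w_\mu},
\]
with $\Pi_\mu$ and $w_\mu$ the usual products over the cells of $\mu$. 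The $t=0$ term of this sum already recovers Corollary~\ref{cor:DeltaPerpFormula}, so the plan is to read off the linear and quadratic terms in $t$ of each summand and sum them.

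The structural input is that a cell $c$ in row $i$ of $\mu$ (French notation) has co-leg $l'(c)=i-1$; hence only the top three rows of $\mu$ can carry factors of $t^0,t^1,t^2$ in $B_\mu$, $\Pi_\mu$ and $w_\mu$ modulo $t^3$. After cancelling the $t$-divisible factors of $w_\mu(q,t)=\prod_{c\in\mu}(q^{a(c)}-t^{l(c)+1})(t^{l(c)}-q^{a(c)+1})$ against those of $(1-t)$, $\Pi_\mu$ and $B_\mu$, I expect the coefficient of $t^j$ ($j\le 2$) to be supported precisely on the partitions obtained from the rectangle $((n-k+1)^k)$ by displacing at most $j$ boxes among its top rows --- which are exactly the shapes $(n-k+2,(n-k+1)^{k-2},n-k)$, $((n-k+2)^2,(n-k+1)^{k-4},(n-k)^2)$ and $(n-k+3,(n-k+1)^{k-2},n-k-1)$ appearing in the conjecture. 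The polynomial $e_{k-1}[B_\mu-1]$, evaluated to the relevant order in $t$ on each such $\mu$, should reorganize into the $q$-integers $[k-1]_q$, $[k-2]_q$ and the Gaussian binomial $\binom{k-2}{2}_q$: these count the columns to the right of column $n-k$ into which the displaced box or boxes may be placed, graded by which columns are used. Applying $\revq$ and $\omega$ and pulling the remaining scalars out, one then checks that the $H_\mu(x;q)$-coefficients assemble into $s_{((n-k)^{k-1})}^\perp$ applied term by term, exactly as in the passage from Theorem~\ref{thm:skew-formula} to Theorem~\ref{conj:main}. The worked cases $(n,k)=(4,3),(5,3)$ and the Remark's observation that Hall--Littlewood positivity already fails at $t^4$ give checks that any correct argument must genuinely be special to $j\le 2$.

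The main obstacle will be the cancellation bookkeeping in $\frac{(1-q)(1-t)\Pi_\mu B_\mu}{w_\mu}$ to order $t^2$: since $w_\mu$ vanishes to various orders at $t=0$, one must argue rigorously which $\mu$ survive modulo $t^3$ and that the survivors combine into \emph{exactly} the three listed Hall--Littlewood terms with the stated $q$-coefficients; there is no formal reason the answer should be Hall--Littlewood positive, and it is not for larger $t$-degree, so the proof cannot be purely formal. A more combinatorial alternative would expand the right-hand side through the generalized charge-on-battery-powered-tableaux identity underlying Theorem~\ref{conj:charge}, expand $\omega\Delta'_{e_{k-1}}e_n$ through the now-proven ``Valley'' parking-function formula restricted to labeled Dyck paths of area at most $2$, and build a $\mathrm{dinv}$-to-$\ch$ bijection in the spirit of the map $\Phi$ of Section~\ref{sec:s=2}; there the difficulty is that the $q$-binomial coefficients force an insertion/unbumping step to align the decorated valleys of a small-area path with the battery columns.
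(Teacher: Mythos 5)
First, a point of fact: the statement you are addressing is Conjecture~\ref{conj:t-squared}. The paper does not prove it --- the authors say only that they have verified it computationally for all $n\le 8$ and $k\le n$. There is therefore no ``paper's own proof'' to compare against; any complete argument you produce would be new. With that said, what you have written is a plan, not a proof: you yourself flag the ``cancellation bookkeeping'' in the plethystic route and the insertion/unbumping alignment in the bijective route as open obstacles, and neither route is carried past an outline.

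There is also a concrete conceptual gap in the plethystic route as you describe it. In the expansion $\Delta'_{e_{k-1}}e_n=\sum_{\mu\vdash n}e_{k-1}[B_\mu-1]\widetilde{H}_\mu(x;q,t)\frac{(1-q)(1-t)\Pi_\mu B_\mu}{w_\mu}$ the index $\mu$ runs over partitions of $n$. The Hall--Littlewood polynomials $H_\nu(x;q)$ appearing in the conjecture are indexed by partitions $\nu$ of $K=k(n-k+1)$, and only after applying $s_{((n-k)^{k-1})}^\perp$ does the degree drop back to $n$. Your sentence asserting that the $t^j$-coefficient is ``supported precisely on the partitions obtained from the rectangle $((n-k+1)^k)$ by displacing at most $j$ boxes'' conflates these two index sets; there is no term-by-term correspondence between the $\mu\vdash n$ in the Macdonald expansion and the $\nu\vdash K$ in the conjecture, and $w_\mu(q,t)|_{t=0}$ is nonzero for every $\mu$, so no summand drops out on its own. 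Indeed even your warm-up claim that the $t=0$ term of the plethystic sum ``already recovers Corollary~\ref{cor:DeltaPerpFormula}'' is itself a nontrivial identity equivalent in content to Theorem~\ref{thm:skew-formula}, which this paper proves by the geometry of Borho--MacPherson partial resolutions rather than by direct manipulation of the plethystic sum. Until you can actually extract the $t=0$ statement from that sum by hand, the $t^1$ and $t^2$ cases --- which must reproduce $q$-binomial coefficients and \emph{sums} of Hall--Littlewoods --- are unlikely to follow. The honest assessment is that both of your proposed routes are reasonable starting points but neither is a proof, and the conjecture remains open.
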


We have checked both Conjectures~\ref{conj:t-coeff} and \ref{conj:t-squared} computationally up to $n=8$ for all $k\leq n$.
In the case of $k=2$, we have the following formula for the full Delta Conjecture symmetric function.

Before we state the formula, we recall the Littlewood-Richardson rule for skew Schur functions in the case of two-row partitions. Given $\la = (\la_1,\la_2)$ and $\mu = (\mu_1,\mu_2)$ partitions,
\[
s_\mu^\perp s_\la = s_{\la/\mu} = \sum_{\nu\vdash |\la/\mu|} c_{\mu,\nu}^\la s_\nu(x)
\]
where $c_{\mu,\nu}^\la$ is the number of semistandard Young tableaux $T$ of skew shape $\la/\mu$ with content $\nu$ whose reverse reading word is \textbf{Yamanouchi}, meaning that if one reads the labels of $T$ in reverse reading order, there are never more $2$s than $1$s up to any given point.

\begin{prop}
    For $k=2$, we have
    \begin{equation}\label{eq:DeltaFor2}
    \omega\Delta'_{e_{1}}e_n = h_{n-2}^\perp \sum_{i=0}^{n-1} H_{(n-1+i,n-1-i)}(x;q)t^i.
    \end{equation}
\end{prop}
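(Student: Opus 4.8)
The plan is to prove \eqref{eq:DeltaFor2} by specializing Corollary~\ref{cor:DeltaPerpFormula} and more generally the known expansion of $\omega\Delta'_{e_{k-1}}e_n$ into Hall--Littlewood pieces, but since the full Delta Conjecture symmetric function is involved (not just its $t=0$ part), I would instead start from a known combinatorial or operator-theoretic formula for $\Delta'_{e_1}e_n$ in the $k=2$ case. First I would recall that in the $k=2$ case the Delta Conjecture symmetric function is comparatively well understood: $\Delta'_{e_1}e_n$ has an explicit parking-function or Schur expansion, and in particular one can extract the coefficient of each $t^i$ as an explicit symmetric function. The goal is then to identify $h_{n-2}^\perp H_{(n-1+i,\,n-1-i)}(x;q)$ with the $t^i$-coefficient of $\omega\Delta'_{e_1}e_n$.

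Concretely, I would proceed as follows. The first step is to note that $H_{(n-1+i,n-1-i)}(x;q)$ is a two-row transformed Hall--Littlewood polynomial, whose Schur expansion is governed by the charge statistic on two-row SSYT of content $(n-1+i,n-1-i)$ (Equation~\eqref{eq:HL-Charge}); these are indexed by a single parameter recording how many of the smaller letters sit in the top row, and the charge is easy to compute explicitly. The second step is to apply $h_{n-2}^\perp$: since $h_{n-2}=s_{(n-2)}$, we have $h_{n-2}^\perp s_\nu = s_{\nu/(n-2)}$, and for a two-row shape $\nu=(\nu_1,\nu_2)$ this skew Schur function is a multiplicity-free sum of the Schur functions $s_{(\nu_1+\nu_2-n+2-j,\,j)}$ for $\max(0,\nu_2-n+2)\le j\le \nu_2$ by Pieri, or is zero if $\nu_1<n-2$. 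Carrying out these two steps and summing over $i$ gives a completely explicit doubly-indexed Schur expansion of the right-hand side of \eqref{eq:DeltaFor2}, with each coefficient a monomial in $q$ times $t^i$.

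The third and main step is to match this against the known Schur expansion of $\omega\Delta'_{e_1}e_n$. Here I would use the fact (from the literature on the Delta Conjecture at $k=2$, e.g.\ via the valley or rise version, both proven in this case) that $\Delta'_{e_1}e_n$ decomposes into hook Schur functions with an explicit $(q,t)$-coefficient, so that $\omega\Delta'_{e_1}e_n$ is a sum of two-row Schur functions with explicit coefficients. I would then verify that the two doubly-indexed families of coefficients coincide term by term. An alternative, and perhaps cleaner, route is to use Corollary~\ref{cor:DeltaPerpFormula} as a sanity check at $t=0$ (both sides reduce to $s_{(n-2)}^\perp H_{((n-1)^2)}$, as $h_{n-2}^\perp = s_{(n-2)}^\perp$ and the $i=0$ term is the only one surviving... actually one must be careful, since $h_{n-2}^\perp$ and $s_{((n-2))}^\perp$ agree but the claimed $t=0$ specialization of \eqref{eq:DeltaFor2} keeps only $i=0$), and then to prove the full $t$-dependence by induction on $n$ using a recursion for $\Delta'_{e_1}e_n$, such as the one coming from the action of $\Delta'_{e_1}$ on $e_n = e_1 e_{n-1} - \cdots$ or from a skewing/branching identity for the relevant graded modules.

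I expect the main obstacle to be the bookkeeping in the third step: reconciling the $q$-powers. The charge statistic on two-row tableaux and the skewing by $h_{n-2}$ each contribute $q$-shifts, and the known formula for the $t^i$-coefficient of $\omega\Delta'_{e_1}e_n$ will be expressed with its own normalization (likely involving $q$-binomials or a $\revq$), so the crux is to show that after all normalizations the powers of $q$ match exactly, not just the supports of the expansions. A secondary subtlety is handling the boundary cases $i=0$ and $i=n-1$ (where the two-row partition $(n-1+i,n-1-i)$ degenerates, e.g.\ at $i=n-1$ it becomes $(2n-2,0)=(2n-2)$ a single row), and making sure the skew Schur functions $s_{\nu/(n-2)}$ vanish precisely when they should; I would treat these by direct inspection rather than by the general argument.
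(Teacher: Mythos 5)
Your plan is essentially the paper's proof: expand each two-row $H_{(n-1+i,n-1-i)}$ in the Schur basis via charge, apply $h_{n-2}^\perp = s_{(n-2)}^\perp$ through the Pieri/Littlewood--Richardson rule, regroup by total $q$-power so the factors $[\ell+1]_{q,t}$ emerge, and match term-by-term against the explicit formula $\omega\Delta'_{e_1}e_n = -s_{(n)} + \sum_i s_{(n-i,i)}\sum_{p=i}^{n-i}[p]_{q,t}$ from \cite[Proposition 6.1]{HRW}. The boundary subtlety you flag is exactly where the $-s_{(n)}$ correction is absorbed: the would-be Littlewood--Richardson filling at $\ell=0$ fails semistandardness because of the vertical domino in $(n-1,n-1)/(n-2)$. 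One small slip in your sketch that you would need to repair while carrying it out: for two-row $\nu=(\nu_1,\nu_2)$, the skewing rule gives $h_{n-2}^\perp s_\nu = \sum_j s_{(\nu_1+\nu_2-n+2-j,\,j)}$ with $\max(0,\nu_2-n+2)\le j\le \min(\nu_2,\,\nu_1-n+2)$, not merely $j\le\nu_2$; the extra bound $j\le\nu_1-n+2$ (equivalently $\mu_1\ge\nu_2$) is what forces $\nu/\mu$ to be a horizontal strip, and it is precisely what produces the range $i-1\le\ell\le n-1-i$ in the paper's computation. This is the bookkeeping you correctly identified as the crux; once it is fixed, the rest of your plan goes through as in the paper.
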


\begin{proof}
    By~\cite[Proposition 6.1]{HRW}, 
    \begin{equation}\label{eq:HRW_formula}
    \omega\Delta'_{e_{1}}e_n = \omega\Delta_{e_1}e_n - \omega e_n = -s_{(n)} + \sum_{i=0}^{\lfloor n/2\rfloor} s_{(n-i,i)}(x)\sum_{p=i}^{n-i} [p]_{q,t},
    \end{equation}
    where $[p]_{q,t} = \sum_{j=0}^{p-1} q^j t^{n-1-j}$.

    Notice that the Hall-Littlewood term $H_{(n-1+i,n-1-i)}(x;q)$ on the right hand side of \eqref{eq:DeltaFor2} expands as $\sum_{j=0}^{n-1-i} q^{j} s_{n-1+i+j,n-1-i-j}$ in the Schur basis, by examining the charge expansion version of \eqref{eq:lascoux-schutzenberger} in this two-row case.  Starting from the right-hand side of \eqref{eq:DeltaFor2}, 
    \begin{align}
        h_{n-2}^\perp \sum_{i=0}^{n-1} H_{(n-1+i,n-1-i)}(x;q)t^i&= h_{n-2}^\perp \left(\sum_{i=0}^{n-1} \sum_{j=0}^{n-1-i} t^iq^j s_{(n-1+i+j,n-1-i-j)}\right)\\
        &=s_{n-2}^\perp\left(\sum_{\ell=0}^{n-1} s_{(n-1+\ell,n-1-\ell)}(q^\ell+q^{\ell-1}t+\cdots+t^\ell)\right) \\
        &= s_{n-2}^\perp\left(\sum_{\ell=0}^{n-1} s_{(n-1+\ell,n-1-\ell)}[\ell+1]_{q,t}\right). \label{eq:before-skew}
    \end{align}
We now examine the coefficient of $s_{(n-i,i)}$ in \eqref{eq:before-skew}.  Applying the Littlewood-Richardson rule to compute $s_{n-2}^\perp s_{(n-1+\ell,n-1-\ell)}$ over all $\ell$, we have that $[\ell+1]_q$ appears once in the coefficient of $s_{(n-i,i)}$ if and only if there exists a Littlewood-Richardson tableau of skew shape $(n-1+\ell,n-1-\ell)/(n-2)$ and content $(n-i,i)$ (and note that since we are in the two-row case, there can only be one such tableau if it exists).

There are two inequalities that govern the existence of such a tableau in the case when $i\ge 1$ and hence there is at least one $2$.  First, the number of $2$s cannot exceed the number of entries in the bottom row (which must all be $1$) by the Yamanouchi condition, so we have $i\le (n-1+\ell)-(n-2)=\ell+1$.  Second, the number of $2$s naturally cannot exceed the size of the top row, and so $i\le n-1-\ell$.  Solving these two inequalities for $\ell$, we find $i-1\le \ell\le n-1-i$.  Finally, all such fillings are semistandard, since the only shape that has a vertical domino is when $\ell=0$, and it has a unique vertical domino, so having at least one $2$ (due to our assumption that $i>1$ in this case) guarantees the existence of the desired Littlewood-Richardson tableau.  It follows that the coefficient of $s_{(n-i,i)}$ is equal to $\sum_{\ell=i-1}^{n-1-i}[\ell+1]_{q,t}=\sum_{p=i}^{n-i}[p]_{q,t}$.  

Finally, we examine the coefficient of $s_{(n)}$.  The same analysis as above goes through, except in the case that $\ell=0$, when the constructed tableau would not be semistandard.  Thus the coefficient of $s_{(n)}$ is $-1+\sum_{p=0}^{n}[p]_{q,t}$, and we are done.
\end{proof}

\begin{remark}
    Alternatively, all of the formulas in this section may be written as formulas for $\Delta'_{e_{k-1}}e_n$ in terms of $q$-Whittaker polynomials $\omega H_\mu(x;q)$ by applying $\omega$ to both sides and replacing the operator $s_{((n-k)^{k-1})}^\perp$ with $s_{((k-1)^{n-k})}^\perp$.
\end{remark}

\section{Next directions}\label{sec:next}

The new results and connections to geometry in this paper open up several natural directions for further investigation.

\begin{q}
 Are the $\Delta$-Springer varieties the only family of Borho--MacPherson $\sP_x^y$ varieties that have sufficient rational smoothness properties to obtain a simple Schur expansion for the graded Frobenius of their cohomology rings?  If not, which others may lead to useful combinatorial formulas?
\end{q}

This paper rests in type A, but the Borho--MacPherson paper is type independent, so we also ask the following.

\begin{q}
   Is there a natural extension of $\Delta$-Springer varieties to all Lie types that has combinatorial meaning?
\end{q}

On the combinatorics side, since Corollaries \ref{cor:DeltaPerpFormula} and \ref{cor:Rnk} give formulas for the $t=0$ specialization of the Delta Conjecture, and Section~\ref{sec:conjs} gives conjectures for other $t$ degrees, we also ask whether we can extend these formulas to the full Delta Conjecture symmetric functions for all $t$ degrees.

\begin{q}
 Can $\Delta'_{e_{k-1}}e_n$ be obtained by applying a t-analogue of a skewing operator to a Macdonald polynomial, generalizing Corollary \ref{cor:DeltaPerpFormula}? Does Corollary \ref{cor:Rnk} have a $q,t$-analog that gives a Schur expansion or other formula relevant to the Delta Conjecture?
\end{q}

Finally, the proofs in this paper rely heavily on the deep geometric, topological, and representation-theoretic machinery developed by Borho and MacPherson.   We would like to see a combinatorial proof along the lines of the Lascoux--Sch\"utzenberger proof of the Hall-Littlewood cocharge formula (see \cite{Butler} for a modern exposition of this proof).

 \begin{q}
Is there a more direct combinatorial or algebraic proof of Theorem \ref{conj:main}?

In particular, in Section \ref{sec:Rnk}, we used the known Schur expansion of \cite{7authors} for the $R_{n,k}$ case in terms of minimaj to give a second proof that the formula of Theorem \ref{conj:main} holds for the $s_{(n)}$ coefficient. Is there a generalization of the minimaj Schur expansion to the setting of $\widetilde{H}_{n,\lambda,s}$ that would allow us to obtain a combinatorial proof for the $s_{(n)}$ coefficient in the general case?
 \end{q}

The companion paper \cite{preprint} will also investigate combinatorial routes towards Theorem \ref{conj:main} via a new formula in terms of Compositional Shuffle Theorem creation operators \cite{CarlssonMellit,HMZ}.

Combining Theorem~\ref{thm:skew-formula} and \eqref{eq:HL-expansion}, our result gives a formula for the symmetric function $s_{((n-k)^{s-1})}^\perp\widetilde{H}_\Lambda$ as a positive sum of Hall-Littlewood polynomials. Furthermore, by  \cite{GarsiaProcesi} there is also a formula for $e_j^\perp\widetilde{H}_\nu$ for any $j$ and $\nu$ as a sum of Hall-Littlewood polynomials.

\begin{q}
    Is there a combinatorial formula for $s_\mu^\perp \widetilde{H}_\nu$ in terms of Hall-Littlewood polynomials that generalizes the expansion \eqref{eq:HL-expansion} to all $\mu$ and $\nu$?
\end{q}

\printbibliography

\end{document}